\def\definetac{\newif\iftac}    
\else\usepackage{amsthm}\fi
\definecolor{darkgreen}{rgb}{0,0.45,0} 
\let\ea\expandafter
\def\mdef#1#2{\ea\ea\ea\gdef\ea\ea\noexpand#1\ea{\ea\ensuremath\ea{#2}\xspace}}
\def\alwaysmath#1{\ea\ea\ea\global\ea\ea\ea\let\ea\ea\csname your@#1\endcsname\csname #1\endcsname
  \ea\def\csname #1\endcsname{\ensuremath{\csname your@#1\endcsname}\xspace}}
\DeclareRobustCommand\widecheck[1]{{\mathpalette\@widecheck{#1}}}
\def\@widecheck#1#2{%
    \setbox\z@\hbox{\m@th$#1#2$}%
    \setbox\tw@\hbox{\m@th$#1%
       \widehat{%
          \vrule\@width\z@\@height\ht\z@
          \vrule\@height\z@\@width\wd\z@}$}%
    \dp\tw@-\ht\z@
    \@tempdima\ht\z@ \advance\@tempdima2\ht\tw@ \divide\@tempdima\thr@@
    \setbox\tw@\hbox{%
       \raise\@tempdima\hbox{\scalebox{1}[-1]{\lower\@tempdima\box
\tw@}}}%
    {\ooalign{\box\tw@ \cr \box\z@}}}
\def\foreachletter#1#2#3{\foreachcount=#1
  \ea\loop\ea\ea\ea#3\@alph\foreachcount
  \advance\foreachcount by 1
  \ifnum\foreachcount<#2\repeat}
\def\foreachLetter#1#2#3{\foreachcount=#1
  \ea\loop\ea\ea\ea#3\@Alph\foreachcount
  \advance\foreachcount by 1
  \ifnum\foreachcount<#2\repeat}
\def\definescr#1{\ea\gdef\csname s#1\endcsname{\ensuremath{\mathscr{#1}}\xspace}}
\def\definecal#1{\ea\gdef\csname c#1\endcsname{\ensuremath{\mathcal{#1}}\xspace}}
\def\definebold#1{\ea\gdef\csname b#1\endcsname{\ensuremath{\mathbf{#1}}\xspace}}
\def\definebb#1{\ea\gdef\csname l#1\endcsname{\ensuremath{\mathbb{#1}}\xspace}}
\def\definefrak#1{\ea\gdef\csname f#1\endcsname{\ensuremath{\mathfrak{#1}}\xspace}}
\def\definebar#1{\ea\gdef\csname #1bar\endcsname{\ensuremath{\overline{#1}}\xspace}}
\def\definetil#1{\ea\gdef\csname #1til\endcsname{\ensuremath{\widetilde{#1}}\xspace}}
\def\definehat#1{\ea\gdef\csname #1hat\endcsname{\ensuremath{\widehat{#1}}\xspace}}
\def\definechk#1{\ea\gdef\csname #1chk\endcsname{\ensuremath{\widecheck{#1}}\xspace}}
\def\defineul#1{\ea\gdef\csname u#1\endcsname{\ensuremath{\underline{#1}}\xspace}}
\def\autofmt@n#1\autofmt@end{\mathrm{#1}}
\def\autofmt@b#1\autofmt@end{\mathbf{#1}}
\def\autofmt@l#1#2\autofmt@end{\mathbb{#1}\mathsf{#2}}
\def\autofmt@c#1#2\autofmt@end{\mathcal{#1}\mathit{#2}}
\def\autofmt@s#1#2\autofmt@end{\mathscr{#1}\mathit{#2}}
\def\autofmt@f#1\autofmt@end{\mathsf{#1}}
\def\autofmt@u#1\autofmt@end{\underline{\smash{\mathsf{#1}}}}
\def\autofmt@U#1\autofmt@end{\underline{\underline{\smash{\mathsf{#1}}}}}
\def\autofmt@h#1\autofmt@end{\widehat{#1}}
\def\autofmt@r#1\autofmt@end{\overline{#1}}
\def\autofmt@t#1\autofmt@end{\widetilde{#1}}
\def\autofmt@k#1\autofmt@end{\check{#1}}
\def\auto@drop#1{}
\def\autodef#1{\ea\ea\ea\@autodef\ea\ea\ea#1\ea\auto@drop\string#1\autodef@end}
\def\@autodef#1#2#3\autodef@end{%
  \ea\def\ea#1\ea{\ea\ensuremath\ea{\csname autofmt@#2\endcsname#3\autofmt@end}\xspace}}
\def\autodefs@end{blarg!}
\def\autodefs#1{\@autodefs#1\autodefs@end}
\def\@autodefs#1{\ifx#1\autodefs@end%
  \def\autodefs@next{}%
  \else%
  \def\autodefs@next{\autodef#1\@autodefs}%
  \fi\autodefs@next}
\DeclareSymbolFont{bbold}{U}{bbold}{m}{n}
\DeclareSymbolFontAlphabet{\mathbbb}{bbold}
\newcommand{\bbone}{\ensuremath{\mathbbb{1}}\xspace}
\mdef\delbar{\overline{\partial}}
\newcommand{\inv}{^{-1}}
\mdef\hf{\textstyle\frac12 }
\mdef\thrd{\textstyle\frac13 }
\mdef\qtr{\textstyle\frac14 }
\newcommand{\op}{^{\mathrm{op}}}
\newcommand{\pushoutcorner}[1][dr]{\save*!/#1+1.2pc/#1:(1,-1)@^{|-}\restore}
\mdef\Id{\mathrm{Id}}
\mdef\id{\mathrm{id}}
\def\frc#1/#2.{\frac{#1}{#2}}   
\mdef\ten{\mathrel{\otimes}}
\mdef\sqten{\mathrel{\boxtimes}}
\DeclareRobustCommand\widecheck[1]{{\mathpalette\@widecheck{#1}}}
\def\@widecheck#1#2{%
    \setbox\z@\hbox{\m@th$#1#2$}%
    \setbox\tw@\hbox{\m@th$#1%
       \widehat{%
          \vrule\@width\z@\@height\ht\z@
          \vrule\@height\z@\@width\wd\z@}$}%
    \dp\tw@-\ht\z@
    \@tempdima\ht\z@ \advance\@tempdima2\ht\tw@ \divide\@tempdima\thr@@
    \setbox\tw@\hbox{%
       \raise\@tempdima\hbox{\scalebox{1}[-1]{\lower\@tempdima\box
\tw@}}}%
    {\ooalign{\box\tw@ \cr \box\z@}}}
\DeclareMathOperator\colim{colim}
\DeclareMathOperator\Ho{Ho}
\newcommand{\ot}{\ensuremath{\leftarrow}}
\mdef\we{\overset{\sim}{\longrightarrow}}
\mdef\leftwe{\overset{\sim}{\longleftarrow}}
\let\xto\xrightarrow
\def\rightarrowtailfill@{\arrowfill@{\Yright\joinrel\relbar}\relbar\rightarrow}
\newcommand\xrightarrowtail[2][]{\ext@arrow 0055{\rightarrowtailfill@}{#1}{#2}}
\def\twoheadrightarrowfill@{\arrowfill@{\relbar\joinrel\relbar}\relbar\twoheadrightarrow}
\newcommand\xtwoheadrightarrow[2][]{\ext@arrow 0055{\twoheadrightarrowfill@}{#1}{#2}}
\def\slashedarrowfill@#1#2#3#4#5{%
  $\m@th\thickmuskip0mu\medmuskip\thickmuskip\thinmuskip\thickmuskip
   \relax#5#1\mkern-7mu%
   \cleaders\hbox{$#5\mkern-2mu#2\mkern-2mu$}\hfill
   \mathclap{#3}\mathclap{#2}%
   \cleaders\hbox{$#5\mkern-2mu#2\mkern-2mu$}\hfill
   \mkern-7mu#4$%
}
\def\rightslashedarrowfill@{%
  \slashedarrowfill@\relbar\relbar\mapstochar\rightarrow}
\newcommand\xslashedrightarrow[2][]{%
  \ext@arrow 0055{\rightslashedarrowfill@}{#1}{#2}}
\mdef\hto{\xslashedrightarrow{}}
\mdef\htoo{\xslashedrightarrow{\quad}}
\def\toiso{\xto{\smash{\raisebox{-.5mm}{$\scriptstyle\sim$}}}}
\long\def\my@drawfill#1#2;{%
\@skipfalse
\fill[#1,draw=none] #2;
\@skiptrue
\draw[#1,fill=none] #2;
}
\newif\if@skip
\newcommand{\skipit}[1]{\if@skip\else#1\fi}
\newcommand{\drawfill}[1][]{\my@drawfill{#1}}
\newif\ifhyperref
  \let\your@state\state
  \def\state#1{\gdef\currthmtype{#1}\your@state{#1}}
  \let\your@staterm\staterm
  \def\staterm#1{\gdef\currthmtype{#1}\your@staterm{#1}}
  \let\defthm\newtheorem
  \def\currthmtype{}
    \def\autoref#1{\ref*{label@name@#1}~\ref{#1}}
    \def\autoref#1{\ref{label@name@#1}~\ref{#1}}
    \let\old@label\label%
    \def\label#1{%
      {\let\your@currentlabel\@currentlabel%
        \edef\@currentlabel{\currthmtype}%
        \old@label{label@name@#1}}%
      \old@label{#1}}
    \def\defthm#1#2{%
      \newtheorem{#1}{#2}[section]%
      \expandafter\def\csname #1autorefname\endcsname{#2}%
      \expandafter\let\csname c@#1\endcsname\c@thm}
    \def\defthm#1#2{\newtheorem{#1}[thm]{#2}}
\let\SK@label\label\fi
    \let\old@label\label
    \let\your@thm\@thm
    \def\@thm#1#2#3{\gdef\currthmtype{#3}\your@thm{#1}{#2}{#3}}
    \def\currthmtype{}
    \def\label#1{{\let\your@currentlabel\@currentlabel\def\@currentlabel%
        {\currthmtype~\your@currentlabel}%
        \SK@label{#1@}}\old@label{#1}}
    \def\autoref#1{\ref{#1@}}
\newtheorem{thm}{Theorem}[section]
\iftac\theoremstyle{plain}\else\theoremstyle{definition}\fi
\iftac\theoremstyle{plain}\else\theoremstyle{remark}\fi
\def\thmqedhere{\expandafter\csname\csname @currenvir\endcsname @qed\endcsname}
  \let\c@equation\c@subsection
  \let\c@equation\c@thm
\numberwithin{equation}{section}
\mdef\ep{\varepsilon}
\mdef\ph{\varphi}
\newcommand{\TikzMorphismStyle}[4]{
 \draw #2 node {$\cdot$};
 \draw #3 node {$\cdot$};
 \draw[#1] ($#2!#4!#3$) -- ($#2!1.0-#4!#3$);
}
\newcommand{\TikzMorphism}[4]{
 \TikzMorphismStyle{white,double,ultra thick}{#2}{#3}{#4}
 \TikzMorphismStyle{#1}{#2}{#3}{#4}
}
\newcommand{\TikzMorphismStyleWithControls}[5]{
 \draw #2 node {$\cdot$};
 \draw #3 node {$\cdot$};
 \draw[#1] ($#2!#5!#3$) .. controls #4 .. ($#2!1.0-#5!#3$);
}
\newcommand{\TikzMorphismWithControls}[5]{
 \TikzMorphismStyleWithControls{white,double,ultra thick}{#2}{#3}{#4}{#5}
 \TikzMorphismStyleWithControls{#1}{#2}{#3}{#4}{#5}
}
\newcommand{\TikzBiprodCube}[4]{
  \TikzMorphism{#1}{(#2,#3)}{(#2+#4,#3)}{0.1}
  \TikzMorphism{#1}{(#2+#4,#3)}{(#2+2*#4,#3)}{0.1}

  \TikzMorphism{#1}{(#2,#3-#4)}{(#2 + #4,#3-#4)}{0.1}
  \TikzMorphism{#1}{(#2+#4,#3-#4)}{(#2+2*#4,#3-#4)}{0.1}

  \TikzMorphism{#1}{(#2,#3-2*#4)}{(#2 + #4,#3-2*#4)}{0.1}
  \TikzMorphism{#1}{(#2+#4,#3-2*#4)}{(#2+2*#4,#3-2*#4)}{0.1}

  \TikzMorphism{#1}{(#2,#3)}{(#2,#3-#4)}{0.1}
  \TikzMorphism{#1}{(#2,#3-#4)}{(#2,#3-2*#4)}{0.1}

  \TikzMorphism{#1}{(#2+#4,#3)}{(#2+#4,#3-#4)}{0.1}
  \TikzMorphism{#1}{(#2+#4,#3-#4)}{(#2+#4,#3-2*#4)}{0.1}

  \TikzMorphism{#1}{(#2+2*#4,#3)}{(#2+2*#4,#3-#4)}{0.1}
  \TikzMorphism{#1}{(#2+2*#4,#3-#4)}{(#2+2*#4,#3-2*#4)}{0.1}
  
  \TikzLabelCubeCenter{#2}{#3}{#4}
}
\newcommand{\TikzLabelSource}[3]{
  \node[anchor=south] at (#1-0.5*#3,#2-0.25*#3) {$q_0$};
}
\newcommand{\TikzLabelSink}[3]{
  \node[anchor=north] at (#1-0.5*#3,#2-1.75*#3) {$q'_0$};
}
\newcommand{\TikzLabelAdjacentVertices}[3]{
  \node[anchor=west]  at (#1+2*#3,#2-#3) {$q_1$};
  \node[anchor=north] at (#1+#3,#2-2*#3) {$q_2$};
}
\newcommand{\TikzLabelCubeCenter}[3]{
  \node[anchor=north west] at (#1+#3,#2-#3) {$b$};
}
\newcommand{\TikzLabelZeroInCofiber}[3]{
  \node[anchor=east] at (#1-1.7*#3,#2-#3) {$z$};
}
\newcommand{\TikzClouds}[3]{
  \node[cloud, cloud puffs=15, minimum width=1.3*#3cm, minimum height=#3cm, align=center, draw] (cloud) at (#1+2.4*#3,#2-#3) {};
  \node[cloud, cloud puffs=15, minimum width=#3cm, minimum height=1.3*#3cm, align=center, draw] (cloud) at (#1+#3,#2-2.4*#3) {};
  \TikzLabelAdjacentVertices{#1}{#2}{#3}
}
\newcommand{\TikzCubeNClouds}[4]{
  \TikzClouds{#2}{#3}{#4}
  \TikzBiprodCube{#1}{#2}{#3}{#4}
}
\newcommand{\TikzSourceForCube}[3]{
  \TikzMorphism{->}{(#1-0.5*#3,#2-0.25*#3)}{(#1+#3,#2-#3)}{0.1}
  \TikzLabelSource{#1}{#2}{#3}
}
\newcommand{\TikzSinkForCube}[3]{
  \TikzMorphism{<-}{(#1-0.5*#3,#2-1.75*#3)}{(#1+#3,#2-#3)}{0.1}
  \TikzLabelSink{#1}{#2}{#3}
}
\newcommand{\TikzSquareForCube}[3]{
  \TikzSourceForCube{#1}{#2}{#3}
  \TikzSinkForCube{#1}{#2}{#3}
  \TikzMorphism{->}{(#1-0.5*#3,#2-0.25*#3)}{(#1-1.7*#3,#2-#3)}{0.1}
  \TikzMorphism{<-}{(#1-0.5*#3,#2-1.75*#3)}{(#1-1.7*#3,#2-#3)}{0.1}
  \TikzLabelZeroInCofiber{#1}{#2}{#3}
}
\newcommand{\TikzCubeWithSource}[4]{
  \TikzCubeNClouds{#1}{#2}{#3}{#4}
  \TikzSourceForCube{#2}{#3}{#4}
}
\newcommand{\TikzCubeWithSink}[4]{
  \TikzCubeNClouds{#1}{#2}{#3}{#4}
  \TikzSinkForCube{#2}{#3}{#4}
}
\newcommand{\TikzCubeWithSquare}[4]{
  \TikzCubeNClouds{#1}{#2}{#3}{#4}
  \TikzSquareForCube{#2}{#3}{#4}
}
\newcommand{\TikzQuiverWithSource}[3]{
  \TikzClouds{#1}{#2}{#3}

  \TikzMorphismStyleWithControls{white,double,ultra thick}{(#1-0.5*#3,#2-0.25*#3)}{(#1+2*#3,#2-#3)}{(#1+0.5*#3,#2-0.75*#3)}{0.04}
  \TikzMorphismWithControls{->}{(#1-0.5*#3,#2-0.25*#3)}{(#1+#3,#2-2*#3)}{(#1+0.6*#3,#2-1.1*#3)}{0.04}
  \TikzMorphismStyleWithControls{->}{(#1-0.5*#3,#2-0.25*#3)}{(#1+2*#3,#2-#3)}{(#1+0.5*#3,#2-0.75*#3)}{0.04}  

  \TikzLabelSource{#1}{#2}{#3}
}
\newcommand{\TikzQuiverWithSink}[3]{
  \TikzClouds{#1}{#2}{#3}

  \TikzMorphismStyleWithControls{white,double,ultra thick}{(#1-0.5*#3,#2-1.75*#3)}{(#1+2*#3,#2-#3)}{(#1+0.5*#3,#2-1.6*#3)}{0.04}
  \TikzMorphismWithControls{<-}{(#1-0.5*#3,#2-1.75*#3)}{(#1+#3,#2-2*#3)}{(#1+0.3*#3,#2-1.8*#3)}{0.04}
  \TikzMorphismStyleWithControls{<-}{(#1-0.5*#3,#2-1.75*#3)}{(#1+2*#3,#2-#3)}{(#1+0.5*#3,#2-1.6*#3)}{0.04}

  \TikzLabelSink{#1}{#2}{#3}
}
\newcommand{\TikzReflectionStrategyLeft}{0.0}
\newcommand{\TikzReflectionStrategyMiddle}{3.2}
\newcommand{\TikzReflectionStrategyRight}{6.0}
\newcommand{\TikzReflectionStrategyRowOne}{13.85}
\newcommand{\TikzReflectionStrategyTransitionOne}{10.0}
\newcommand{\TikzReflectionStrategyRowTwo}{9.15}
\newcommand{\TikzReflectionStrategyTransitionTwo}{5.35}
\newcommand{\TikzReflectionStrategyRowThree}{4.5}
\newcommand{\TikzReflectionStrategyTransitionThree}{0.5}
\newcommand{\TikzReflectionStrategyRowFour}{0.0}
\newcommand{\TikzReflectionStrategy}{
  \begin{tikzpicture}
    \begin{scope}[scale=.85]
    
    \TikzQuiverWithSource{\TikzReflectionStrategyLeft}{\TikzReflectionStrategyRowOne}{1.0}
    \node[anchor=east] at (\TikzReflectionStrategyLeft-1.0,\TikzReflectionStrategyRowOne-1.3) {$Q\colon$};
    
    \draw[->,dashed] (\TikzReflectionStrategyLeft+1.0,\TikzReflectionStrategyTransitionOne+0.4) -- (\TikzReflectionStrategyLeft+1.0,\TikzReflectionStrategyTransitionOne-0.4);
    \node[anchor=east] at (\TikzReflectionStrategyLeft+0.8,\TikzReflectionStrategyTransitionOne) {add a biproduct cube};
    
    \TikzCubeWithSource{->}{\TikzReflectionStrategyLeft}{\TikzReflectionStrategyRowTwo}{1.0}
    \node[anchor=east] at (\TikzReflectionStrategyLeft-1.0,\TikzReflectionStrategyRowTwo-1.0) {$Q_1\colon$};

    \draw[->,dashed] (\TikzReflectionStrategyLeft+1.0,\TikzReflectionStrategyTransitionTwo+0.4) -- (\TikzReflectionStrategyLeft+1.0,\TikzReflectionStrategyTransitionTwo-0.4);
    \node[anchor=east] at (\TikzReflectionStrategyLeft+0.8,\TikzReflectionStrategyTransitionTwo) {make the cube invertible};

    \TikzCubeWithSource{<->}{\TikzReflectionStrategyLeft}{\TikzReflectionStrategyRowThree}{1.0}
    \node[anchor=east] at (\TikzReflectionStrategyLeft-1.0,\TikzReflectionStrategyRowThree-1.0) {$Q_2\colon$};

    \draw[->,dashed] (\TikzReflectionStrategyLeft+1.0,\TikzReflectionStrategyTransitionThree+0.4) .. controls (\TikzReflectionStrategyLeft+1.0,\TikzReflectionStrategyTransitionThree-0.1) .. (\TikzReflectionStrategyLeft+1.4,\TikzReflectionStrategyTransitionThree-0.4);
    \node[anchor=east] at (\TikzReflectionStrategyLeft+0.8,\TikzReflectionStrategyTransitionThree) {construct a cofiber};

    \TikzQuiverWithSink{\TikzReflectionStrategyRight}{\TikzReflectionStrategyRowOne}{1.0}
    \node[anchor=west] at (\TikzReflectionStrategyRight+3.7,\TikzReflectionStrategyRowOne-1.3) {$\colon Q'$};

    \draw[->,dashed] (\TikzReflectionStrategyRight+1.0,\TikzReflectionStrategyTransitionOne+0.4) -- (\TikzReflectionStrategyRight+1.0,\TikzReflectionStrategyTransitionOne-0.4);
    \node[anchor=west] at (\TikzReflectionStrategyRight+1.2,\TikzReflectionStrategyTransitionOne) {add a biproduct cube};

    \TikzCubeWithSink{<-}{\TikzReflectionStrategyRight}{\TikzReflectionStrategyRowTwo}{1.0}
    \node[anchor=west] at (\TikzReflectionStrategyRight+3.7,\TikzReflectionStrategyRowTwo-1.0) {$\colon Q'_1$};

    \draw[->,dashed] (\TikzReflectionStrategyRight+1.0,\TikzReflectionStrategyTransitionTwo+0.4) -- (\TikzReflectionStrategyRight+1.0,\TikzReflectionStrategyTransitionTwo-0.4);
    \node[anchor=west] at (\TikzReflectionStrategyRight+1.2,\TikzReflectionStrategyTransitionTwo) {make the cube invertible};

    \TikzCubeWithSink{<->}{\TikzReflectionStrategyRight}{\TikzReflectionStrategyRowThree}{1.0}
    \node[anchor=west] at (\TikzReflectionStrategyRight+3.7,\TikzReflectionStrategyRowThree-1.0) {$\colon Q'_2$};
    
    \draw[->,dashed] (\TikzReflectionStrategyRight+1.0,\TikzReflectionStrategyTransitionThree+0.4) .. controls (\TikzReflectionStrategyRight+1.0,\TikzReflectionStrategyTransitionThree-0.1) .. (\TikzReflectionStrategyRight+0.6,\TikzReflectionStrategyTransitionThree-0.4);
    \node[anchor=west] at (\TikzReflectionStrategyRight+1.2,\TikzReflectionStrategyTransitionThree) {construct a fiber};

    \TikzCubeWithSquare{<->}{\TikzReflectionStrategyMiddle}{\TikzReflectionStrategyRowFour}{1.0}  
    
    \end{scope}
  \end{tikzpicture}
}
\tikzset{lab/.style={auto,font=\scriptsize}} 
\newcommand{\D}{\sD}
\newcommand{\E}{\sE}
\let\sset\bsSet
\def\cPDER{\ensuremath{\mathcal{PD}\mathit{ER}}\xspace}
\def\ho{\mathscr{H}\!\mathit{o}\xspace}
\let\oldboxtimes\boxtimes
\def\boxtimes{\mathrel{\oldboxtimes}}
\newcommand{\fib}{\mathsf{fib}}
\newcommand{\cof}{\mathsf{cof}}
\newcommand{\sse}{\stackrel{\mathrm{s}}{\sim}}
\def\ccsub{_{\mathrm{cc}}}
\def\pdh(#1,#2){\llbracket #1,#2\rrbracket}
\def\ldh(#1,#2){\llbracket #1,#2\rrbracket\ccsub}
\def\pend(#1){\pdh(#1,#1)}
\def\lend(#1){\ldh(#1,#1)}
\def\shift#1#2{{#1}^{#2}}
\def\DTl#1#2#3#4#5#6#7{%
  \xymatrix@C=3pc{{#1} \ar[r]^-{#2} &
    {#3} \ar[r]^-{#4} &
    {#5} \ar[r]^-{#6} &
    {#7}
  }}
\newsavebox{\tvabox}
\savebox\tvabox{\hspace{1mm}\begin{tikzpicture}[>=latex',baseline={(0,-.18)}]
  \draw[->] (0,.1) -- +(1,0);
  \node at (.5,0) {$\scriptscriptstyle\bot$};
  \draw[->] (1,-.1) -- +(-1,0);
  \draw[->] (1,-.2) -- +(-1,0);
\end{tikzpicture}\hspace{1mm}}
\newcommand{\Ch}{\mathrm{Ch}}
\newcommand{\Mod}{\mathrm{Mod}}
\title{Tilting theory for trees via stable homotopy theory}
\author{Moritz Groth}
\address{MPIM, Vivatsgasse 7, 53111 Bonn, Germany}
\email{mgroth@mpim-bonn.mpg.de}
\author{Jan \v{S}\v{t}ov\'{\i}\v{c}ek}
\address{Department of Algebra, Charles University in Prague, Sokolovska 83, 186 75 Praha~8, Czech Republic}
\email{stovicek@karlin.mff.cuni.cz}
\subjclass[2010]{Primary: 55U35. Secondary: 16E35, 18E30, 55U40.}
\keywords{Stable derivator, reflection functor, strong stable equivalence, homotopical epimorphism}
\date{\today}
\thanks{%
The first named author was supported by the Dutch Science Foundation (NWO). The second named author was supported by grant GA\v{C}R P201/12/G028 from the Czech Science Foundation.%
}
\begin{document}

\begin{abstract}
We show that variants of the classical reflection functors from quiver representation theory exist in any abstract stable homotopy theory, making them available for example over arbitrary ground rings, for quasi-coherent modules on schemes, in the differential-graded context, in stable homotopy theory as well as in the equivariant, motivic, and parametrized variant thereof. As an application of these equivalences we obtain abstract tilting results for trees valid in all these situations, hence generalizing a result of Happel. 

The main tools introduced for the construction of these reflection functors are homotopical epimorphisms of small categories and one-point extensions of small categories, both of which are inspired by similar concepts in homological algebra.
\end{abstract}

\maketitle

\tableofcontents

\section{Introduction}
\label{sec:intro}

In \cite{gabriel:unzerlegbar} Gabriel classified (up to Morita equivalence) connected hereditary representation-finite algebras over an algebraically closed field~$k$ by means of their associated quivers. More precisely, an algebra is of the above kind if and only if it is the path algebra $kQ$, where the graph of the quiver $Q$ belongs to a certain list of explicit graphs (which is completely exhausted by the Dynkin diagrams of type $ADE$). 

Later Bern{\v{s}}te{\u\i}n, Gel$'$fand, and Ponomarev \cite{BGP:reflection} gave an elegant proof of this same result based on \emph{reflection functors}. Given a quiver~$Q$ and a vertex $q\in Q$ which is a source (no arrow ends at $q$) or a sink (no arrow starts at $q$), the reflection $Q'=\sigma_qQ$ of $Q$ is the quiver obtained by reversing the orientations of all edges adjacent to~$q$. Associated to this reflection at the level of quivers, there is a reflection functor $\Mod(kQ)\to\Mod(kQ')$ between the abelian categories of representations of the respective path algebras $kQ$ and $kQ'$. These reflection functors are not equivalences, but the deviation from this is well-understood (see \S\ref{sec:reflectrep} for a precise definition of these functors).

Happel \cite{happel:fdalgebra} showed that if the quiver contains no \emph{oriented} cycles, then the total derived functors of these reflection functors induce exact equivalences 
\[
D(kQ)\stackrel{\Delta}{\simeq} D(kQ')
\]
between the derived categories of the path-algebras (and a similar equivalence $D(\cA^Q) \simeq D(\cA^{Q'})$ for an arbitrary abelian category $\cA$ was later established by Ladkani~\cite{Ladkani07}). \emph{The main aim of this paper is to show that for oriented trees this result is a formal consequence of stability alone and it hence has variants in many other contexts arising in algebra, geometry, and topology}. Let us make this more precise and then illustrate the added generality by mentioning some examples.

Recall that there are many different approaches to axiomatic homotopy theory including model categories, $\infty$-categories, and derivators. Here we choose to work with \emph{derivators}, a notion introduced by Heller \cite{heller:htpy}, Grothendieck \cite{grothendieck:derivateurs}, Franke \cite{franke:adams}, and others. A derivator is some kind of a minimal extension of a classical derived category or homotopy category to a framework with a powerful calculus of homotopy (co)limits and also homotopy Kan extensions. In this extension, homotopy (co)limit constructions are characterized by ordinary universal properties, making them accessible to elementary methods from category theory. A derivator is \emph{stable} if it admits a zero object and if homotopy pushouts and homotopy pullbacks coincide. Stable derivators provide an enhancement of triangulated categories (see \cite{groth:ptstab}). For example associated to a ring~$R$ there is the stable derivator~$\D_R$ of unbounded chain complexes over~$R$, enhancing the classical derived category $D(R)$.

An important construction at the level of derivators is given by shifting: given a derivator \D and a small category $B$, there is the derivator $\D^B$ of coherent $B$-shaped diagrams in \D, which is stable as soon as \D is. Since a quiver $Q$ is simply a graph and hence has an associated free category, this shifting operation can be applied to quivers. For example, if we consider the stable derivator $\D_k$ of a field~$k$, then the shifted derivator~$\D_k^Q$ is equivalent to the stable derivator $\D_{kQ}$ associated to the path-algebra~$kQ$.

The main aim of this paper is then to show that if $Q$ is an oriented tree and if $Q'$ is obtained from $Q$ by an arbitrary reorientation, then \emph{for every stable derivator}~\D there is a pseudo-natural equivalence of derivators
\[
\D^Q\simeq\D^{Q'}.
\]
In the terminology of \cite{gst:basic} we thus show that $Q$ and $Q'$ are \emph{strongly stably equivalent} (\autoref{thm:reflection} and its corollaries). Choosing specific stable derivators, this gives us refined variants of the above-mentioned result of Happel. If we take the stable derivator $\D_k$ of a field, then we obtain equivalences of derivators 
\[
\D_{kQ}\simeq \D_k^Q\simeq \D_k^{Q'}\simeq \D_{kQ'},
\]
and, in particular, exact equivalences $D(kQ)\stackrel{\Delta}{\simeq} D(kQ')$ of the underlying triangulated categories. 

However, the same result is also true for the derivator $\D_R$ of a ring~$R$, for the derivator $\D_X$ of a (quasi-compact and quasi-separated) scheme~$X$, for the derivator $\D_A$ of a differential-graded algebra~$A$, for the derivator $\D_E$ of a (symmetric) ring spectrum~$E$, and for other stable derivators arising for example in stable homotopy theory as well as in its equivariant, motivic, or parametrized variants. Moreover, these equivalences are pseudo-natural with respect to exact morphisms and hence commute, in particular, with various restriction of scalar functors, induction and coinduction functors, and localizations and colocalizations (see \cite[\S5]{gst:basic} for more examples of stable derivators and many references).

By a combinatorial argument in order to obtain arbitrary reorientations of trees it is enough to construct reflection functors at sources and sinks inducing such pseudo-natural equivalences. We mimic the classical construction from algebra, however we have to adapt certain steps significantly to make them work in this more general context. Let $q_0\in Q$ be a source in an oriented tree. The classical reflection functors are roughly obtained by taking the sum of all outgoing morphisms at the source, passing to the cokernel of this map, and then using the structure maps of the biproduct in order to obtain a representation of the reflected quiver (see \S\ref{sec:reflectrep} for more details).

The main reason why we have to work harder in this more general context is that the final, innocent looking step cannot be performed that easily with abstract coherent diagrams: in such a diagram, we cannot simply replace the projections of a biproduct by the corresponding inclusions. Instead this is achieved by passing to larger diagrams which encode the biproduct objects and all the necessary projection and injection maps simultaneously. 

A first step towards this is obtained by encoding finite biproducts by means of certain $n$-cubes of length two, the biproduct object sitting in the center (see \S\ref{sec:biproducts}). For two summands the picture to have in mind is 
\[
\xymatrix{
0\ar[r]\ar[d]&X\ar[r]\ar[d]&0\ar[d]\\
Y\ar[r]\ar[d]&X\oplus Y\ar[r]\ar[d]&Y\ar[d]\\
0\ar[r]&X\ar[r]&0.
}
\]
Using the basic theory of strongly bicartesian $n$-cubes established in \cite[\S8]{gst:basic}, this can be generalized to the case of finitely many arguments -- the problem still being that we cannot simply pass from the projections to the injections.

The easy but key observation to solve this is that in such biproduct diagrams all `length two morphisms' are invertible. This suggests that these diagrams should arise as restrictions of similar diagrams where the shape is given by some kind of `invertible $n$-cube of length two' coming with all the necessary structure maps in both directions. In order to make this precise, we introduce the concept of \emph{homotopical epimorphism} (see \S\ref{sec:epi}), and verify that this indeed works if we consider the standalone cube (see \S\ref{sec:epibiprod}).

The next step is then to inductively take into account the parts of the representations which lie outside of this $n$-cube. This is achieved by means of \emph{one-point extensions} (see \S\ref{sec:onepoint}). Setting up things in the correct way, we end up with a category which contains both the original quiver and the reflected one as subcategories, such that we understand which representations of this larger category come from the respective quivers. This shows finally that the two quivers are strongly stably equivalent (see \S\ref{sec:reflection}).

This paper is a sequel to \cite{gst:basic} which in turn builds on \cite{groth:ptstab} and \cite{gps:mayer}, and as a such is part of a \emph{formal study of stability} (similarly to \cite{gps:additivity} which is a \emph{formal study of the interaction of stability and monoidal structure}). The aim of these papers and its sequels is to develop a formal, stable calculus which is available in arbitrary abstract stable homotopy theories, including typical situations arising in algebra, geometry, and topology (like the ones mentioned above). We expect this calculus to be rather rich, and we will develop further aspects of it somewhere else. For example, in \cite{gst:Dynkin-A} we study in more detail abstract representation theory of Dynkin quivers of type~$A$. As applications this yields universal tilting modules for $A_n$-quivers, i.e., certain spectral bimodules realizing important functors (e.g., reflection functors, Coxeter functors) in arbitrary stable homotopy theories. Besides conceptual explanations of fractionally Calabi--Yau dimensions this also allows for applications to the various notions of higher triangulations. And in \cite{gst:acyclic}, we establish abstract reflection functors and universal tilting modules for more general shapes, including arbitrary \emph{acyclic quivers}.

The content of the sections is as follows. In \S\S\ref{sec:derivators}--\ref{sec:stable} we recall some basics on derivators and, in particular, stable derivators. In \S\ref{sec:biproducts} we describe how to model finite biproducts in stable derivators by means of $n$-cubes. In \S\ref{sec:reflectrep} we recall the classical construction of reflection functors in representation theory of quivers, and describe the strategy on how to generalize this to abstract stable derivators. In~\S\ref{sec:epi} we introduce and study homotopical epimorphisms. Specializing a combinatorial detection criterion for homotopy exact squares from \cite{gps:mayer} we also obtain such a criterion for homotopical epimorphisms which is crucial in later sections. In \S\ref{sec:epibiprod} we establish a key example of a homotopical epimorphism, allowing us to describe finite biproducts in stable derivators by `invertible $n$-cubes'. In \S\ref{sec:onepoint} we introduce one-point extensions, and show that homotopical epimorphisms are stable under one-point extensions, in a way that we can control the essential images of the associated restriction functors. In \S\ref{sec:reflection} we assemble the above results to construct the reflection functors, and deduce that arbitrary reorientations of oriented trees yield strongly stably equivalent quivers.

\textit{Acknowledgments.} The authors thank an anonymous referee for helpful comments on an earlier version of this paper. This includes simplified proofs of \autoref{prop:oneexact} and \autoref{thm:onepointessim}.

\section{Review of derivators}
\label{sec:derivators}

In this section and the following one we include a short review of derivators and stable derivators, mainly to fix some notation and to quote a few results which are of constant use in later sections. More details can be found in  \cite{groth:ptstab}, its sequel \cite{gps:mayer}, and the many references therein. 

Let \cCat denote the 2-category of small categories, \cCAT the 2-category of not necessarily small categories. The category with one object and its identity morphism only is denoted by~$\bbone$. Note that objects $a\in A$ correspond to functors $a\colon\bbone\to A$ under the natural isomorphism $A\cong A^\bbone$.

A \textbf{prederivator} is simply a 2-functor $\D\colon\cCat\op\to\cCAT.$\footnote{Recall that $\cCat\op$ is obtained from $\cCat$ by reversing the orientation of the functors but not of the natural transformations. This reflects the fact that, following Heller~\cite{heller:htpy} and Franke~\cite{franke:adams}, our convention for derivators is based on \emph{diagrams}. There is an alternative, but isomorphic approach based on \emph{presheaves}, i.e., contravariant functors, in which case also the orientations of the natural transformations should be changed; see for example \cite{grothendieck:derivateurs,cisinski:idcm}.} Morphisms of prederivators are pseudo-natural transformations and transformations of prederivators are modifications so that we obtain a 2-category $\cPDER$ of prederivators (see \cite{borceux1}). The category $\D(A)$ is the category of \textbf{coherent $A$-shaped diagrams} in \D. If $u\colon A\to B$ is a functor, then we denote the \textbf{restriction functor} by $u^*\colon\D(B) \to \D(A)$. 

In the case of a functor $a\colon\bbone\to A$ classifying an object, $a^\ast\colon\D(A)\to\D(\bbone)$ is an \textbf{evaluation functor}, taking values in the \textbf{underlying category} $\D(\bbone)$. If $f\colon X\to Y$ is a morphism in $\D(A)$, then its image is denoted by $f_a\colon X_a\to Y_a$. These evaluation functors allow us to assign to any coherent diagram $X\in\D(A)$ an \textbf{underlying (incoherent) diagram} $A\to\D(\bbone)$. The resulting functor $\D(A)\to\D(\bbone)^A$ however is, in general, far from being an equivalence, and coherent diagrams are hence not determined by their underlying diagrams, even not up to isomorphism. Nevertheless, frequently we draw coherent diagrams as usual and say that such a diagram has the form of or looks like its underlying diagram. 

Without additional axioms we cannot perform any constructions. A \emph{derivator} is a prederivator which `allows for a well-behaved calculus of Kan extensions', satisfying key properties of the calculus available in model categories, $\infty$-categories, other approaches to higher category theory, as well as in ordinary categories. This seemingly abstract concept turns out to capture many typical constructions in homological algebra and homotopy theory, including
\begin{enumerate}
\item suspensions, loops, (co)fibers, Baratt--Puppe sequences \cite{groth:ptstab},
\item homotopy orbits and homotopy fixed points of actions of discrete groups, leading to (co)homology of groups in the algebraic context,
\item homotopy (co)ends and homotopy tensor products of functors \cite{gps:additivity}, 
\item spectrifications of prespectrum objects,
\end{enumerate}
and many more including the reflection functors as we show in this paper. The axiomatization of this calculus is as follows.

If a restriction functor $u^\ast\colon\D(B)\to\D(A)$ admits a left adjoint $u_!\colon \D(A)\to\D(B)$, then we refer to it as a \textbf{left Kan extension functor}. A right adjoint $u_\ast\colon\D(A)\to\D(B)$ is a \textbf{right Kan extension functor}. In the examples of interest these are really \emph{homotopy Kan extension functors}. We nevertheless follow the established terminology for~$\infty$-categories and refer to the functors as \emph{Kan extensions}, and there is no risk of confusion as `categorical' Kan extensions are meaningless in the context of an abstract prederivator. In the special case that $B=\bbone$ is the terminal category and we hence consider the unique functor $\pi=\pi_A\colon A\to \bbone$, the functor $\pi_!=\mathrm{colim}_A$ is a \textbf{colimit functor} and $\pi_\ast=\mathrm{lim}_A$ a \textbf{limit functor}. 

To actually work with these Kan extensions, one encodes the pointwise formulas which are known to be satisfied in the examples of interest (see \cite[X.3.1]{maclane} for the classical case). To make this axiom precise, we consider the special cases of comma squares
\begin{equation}
\vcenter{
\xymatrix{
(u/b)\ar[r]^-p\ar[d]_-\pi\drtwocell\omit{}&A\ar[d]^-u&&(b/u)\ar[r]^-q\ar[d]_-\pi&A\ar[d]^-u\\
\bbone\ar[r]_-b&B,&&\bbone\ar[r]_-b&B\ultwocell\omit{},
}
}
\label{eq:Der4}
\end{equation}
which come with canonical transformations $u\circ p\to b\circ\pi$ and $b\circ\pi\to u\circ q.$

\begin{defn}
  A \textbf{derivator} is a prederivator $\D\colon\cCat\op\to\cCAT$ with the following properties.
  \begin{itemize}[leftmargin=4em]
  \item[(Der1)] $\D\colon \cCat\op\to\cCAT$ takes coproducts to products.  In particular, $\D(\emptyset)$ is the terminal category.
  \item[(Der2)] For any $A\in\cCat$, a morphism $f\colon X\to Y$ is an isomorphism in $\D(A)$ if and only if the morphisms $f_a\colon X_a\to Y_a, a\in A,$ are isomorphisms in $\D(\bbone).$
  \item[(Der3)] Each functor $u^*\colon \D(B) \to\D(A)$ has both a left adjoint $u_!$ and a right adjoint $u_*$.
  \item[(Der4)] For any functor $u\colon A\to B$ and any $b\in B$ the canonical transformation
\[ 
\pi_! p^* \to \pi_! p^* u^* u_! \to \pi_! \pi^* b^* u_! \to b^* u_!
\]
is an isomorphism as is the canonical transformation
\[
b^* u_* \to \pi_* \pi^* b^* u_* \to \pi_* q^* u^* u_* \to \pi_* q^*.
\]
  \end{itemize}
\end{defn}

The axioms (Der1) and (Der3) together imply that $\D(A)$ has small categorical coproducts and products, hence, in particular, initial objects and final objects. These are the only actual 1-categorical (co)limits which must exist in each derivator. We will add a few explanatory comments on (Der4) after the following list of examples.

\begin{egs}\label{egs:prederivators}
~ 
\begin{enumerate}
\item Any category \bC gives rise to a \textbf{represented} prederivator~$y(\bC)$ defined by $y(\bC)(A) \coloneqq \bC^A.$ Its underlying category is equivalent to \bC itself. This prederivator is a derivator if and only if \bC is complete and cocomplete, in which case the functors $u_!,u_\ast$ are ordinary Kan extension functors. 
\item A \emph{Quillen model category} \bC (see e.g.~\cite{quillen:ha,hovey:modelcats}) with \emph{weak equivalences} \bW has an underlying \textbf{homotopy derivator} $\ho(\bC)$ defined by formally inverting the pointwise weak equivalences $\ho(\bC)(A) \coloneqq (\bC^A)[(\bW^A)^{-1}]$. The underlying category of $\ho(\bC)$ is the homotopy category $\bC[\bW^{-1}]$ of \bC, and the functors $u_!,u_*$ are derived versions of the functors of $y(\bC)$ (see~\cite{cisinski:idcm} for the general case and \cite{groth:ptstab} for an easy proof in the case of combinatorial model categories). \item Similarly, let $\cC$ be an $\infty$-category in the sense of Joyal \cite{joyal:barca} and Lurie \cite{lurie:HTT} (see \cite{groth:scinfinity} for an introduction). Then we obtain a prederivator $\ho(\cC)$ by sending $A$ to the homotopy category of $\cC^{N(A)}$ where $N(A)$ denotes the nerve of $A$. For a sketch proof that for a complete and cocomplete $\infty$-category this yields a derivator, the \textbf{homotopy derivator} of $\cC$, we refer to~\cite{gps:mayer}.  
\item Given a derivator \D and $B\in\cCat$ we can define the \textbf{shifted derivator} $\shift\D B$ by $\shift\D B(A) \coloneqq \D(B\times A)$ (see~\cite[Theorem~1.25]{groth:ptstab}). Moreover, the \textbf{opposite derivator} $\D\op$ is defined by $\D\op(A) \coloneqq \D(A\op)\op$. These two constructions satisfy many compatibilities with the previous three examples. Also shifting and the passage to opposites are compatible in the sense that we have $(\shift\D B)\op \cong \shift{(\D\op)}{B\op}$.
\end{enumerate}
\end{egs}

For more specific examples of derivators we refer the reader to \cite[Examples~5.4]{gst:basic}. The existence of opposites of derivators implies that the duality principle applies to the theory of derivators. The shifting operation is of central importance in this paper. Given a derivator \D and $B\in\cCat$, then $\D^B$ is the homotopy theory of coherent $B$-shaped diagrams in \D. We will later use this in the special case where $B=Q$ is a quiver.

With these examples in mind, we see that (Der4) expresses the idea that Kan extensions in derivators are pointwise. In the notation of the axiom and of \eqref{eq:Der4}, let us consider the derivator $\D=y(\bC)$ represented by a complete and cocomplete category~$\bC$. Given a diagram $X\colon A\to \bC$, the canonical maps in (Der4) are the isomorphisms
\[
\colim_{(u/b)} X\circ p \stackrel{\cong}{\to} \mathrm{LKan}_u(X)_b\qquad\text{and}\qquad
\mathrm{RKan}_u(X)_b\stackrel{\cong}{\to}\lim_{(b/u)} X\circ q,
\]
which allow us classically to compute Kan extensions in terms of colimits and limits.

A \textbf{morphism} of derivators is simply a morphism of underlying prederivators, i.e., a pseudo-natural transformation $F\colon \D\to\E$. Similarly, given 
two such morphisms $F,G\colon\D\to\E$, a \textbf{natural transformation} $F\to G$ is a modification. Thus, we define the $2$-category $\cDER$ of derivators as a full sub-2-category of $\cPDER$.

We now recall the notion of a \emph{homotopy exact square} of small categories, which arguably is the main tool in the study of derivators as it allows us to extend many key facts about classical Kan extensions to the context of an abstract derivator and hence to \autoref{egs:prederivators} and in particular to \cite[Examples~5.4]{gst:basic}. Given a derivator \D and a natural transformation living in a square
\begin{equation}
  \vcenter{\xymatrix{
      D\ar[r]^p\ar[d]_q \drtwocell\omit{\alpha} &
      A\ar[d]^u\\
      B\ar[r]_v &
      C
    }}\label{eq:hoexactsq'}
\end{equation}
of small categories, we obtain \textbf{canonical mate-transformations}
\begin{gather}
  q_! p^* \stackrel{\eta}{\to} q_! p^* u^* u_! \xto{\alpha^*} q_! q^* v^* v_! \stackrel{\epsilon}{\to} v^* u_!  \mathrlap{\qquad\text{and}}\label{eq:hoexmate1'}\\
  u^* v_* \stackrel{\eta}{\to} p_* p^* u^* v_* \xto{\alpha^*} p_* q^* v^* v_* \stackrel{\epsilon}{\to} p_* q^*\label{eq:hoexmate2'}
\end{gather}
Here, $\eta$ denotes the adjunction units of the respective adjunctions and $\epsilon$ the respective adjunction counits. It can be shown that \eqref{eq:hoexmate1'} is an isomorphism if and only if \eqref{eq:hoexmate2'} is one. 

A square \eqref{eq:hoexactsq'} is by definition \textbf{homotopy exact} if the canonical mates \eqref{eq:hoexmate1'} and \eqref{eq:hoexmate2'} are isomorphisms. By (Der4) the comma squares \eqref{eq:Der4} are homotopy exact, and many further examples can be established (see for example \cite{ayoub:1,ayoub:2,maltsiniotis:exact} and \cite{groth:ptstab,gps:mayer,gst:basic}.) Here, we only collect the examples needed in this paper, and we refer to the above references for more details.

\begin{egs}\label{egs:htpy}
~ 
\begin{enumerate}
\item \emph{Kan extensions along fully faithful functors are again fully faithful.} If $u\colon A\to B$ is fully faithful, then the square $u\circ \id=u\circ \id$ is homotopy exact. Thus, the unit $\eta\colon\id\to u^\ast u_!$ and the counit $\epsilon\colon u^\ast u_\ast\to\id$ are isomorphisms  (\cite[Proposition~1.20]{groth:ptstab}).
\item \emph{Kan extensions and restrictions in unrelated variables commute.} For functors $u\colon A\to B$ and $v\colon C\to D$ the squares
\begin{equation}
\vcenter{
\xymatrix{
A\times C\ar[r]^-{u\times\id}\ar[d]_-{\id\times v}&B\times C\ar[d]^-{\id\times v}\\
A\times D\ar[r]_-{u\times\id}&B\times D
}}
\end{equation}
are homotopy exact (\cite[Proposition~2.5]{groth:ptstab}).
\item \emph{Right adjoint functors are \textbf{homotopy final}.} If $u\colon A\to B$ is a right adjoint, then the square
\[
\xymatrix{
A\ar[r]^-u\ar[d]_-{\pi_A}&B\ar[d]^-{\pi_B}\\
\bbone\ar[r]_-\id&\bbone
}
\]
is homotopy exact, i.e., the canonical mate $\mathrm{colim}_Au^\ast\to\mathrm{colim}_B$ is an isomorphism (\cite[Proposition~1.18]{groth:ptstab}). 
\item \emph{Homotopy exact squares are compatible with pasting.} The passage to mate-transformation is functorial with respect to horizontal and vertical pasting. Consequently, horizontal and vertical pastings of homotopy exact squares are homotopy exact (\cite[Lemma~1.14]{groth:ptstab}). 
\end{enumerate}
\end{egs}

Since Kan extensions and restrictions in unrelated variables commute, there are \emph{parametrized versions} of restrictions and Kan extensions. More precisely, associated to a derivator~\D and a functor $u\colon A\to B$ there are adjunctions of derivators
\begin{equation}
(u_!,u^\ast)\colon\D^A\rightleftarrows\D^B\qquad\text{and}\qquad
(u^\ast,u_\ast)\colon\D^B\rightleftarrows\D^A.\label{eq:Kanadjunction}
\end{equation}
(See \cite[\S2]{groth:ptstab} for details on adjunctions of derivators.) In particular, if $u$ is fully faithful, then the morphisms $u_!,u_\ast$ induce equivalences of derivators onto the respective essential images, a result we use without further reference in later sections.

Finally, we use the following notation. Given a (pre)derivator \D, then we write $X\in\D$ in order to indicate that $X\in\D(A)$ for some $A\in\cCat$.

\section{Stable derivators and strongly stably equivalent categories}
\label{sec:stable}

In this short section we recall some basics about pointed and stable derivators, mostly to fix some notation (again, for more details see \cite{groth:ptstab,gps:mayer}). Examples of such derivators arise from pointed or stable model categories and similarly for complete and cocomplete $\infty$-categories. Thus stable derivators describe aspects of the calculus of homotopy Kan extensions available in such examples arising in algebra, geometry, and topology. We will also recall the notion of strongly stably equivalent categories  introduced in \cite{gst:basic} .

As shown in \cite{groth:ptstab}, the following definition is equivalent to the original one of Maltsiniotis \cite{m:k-theory-deriv}.

\begin{defn} 
A derivator \D is \textbf{pointed} if $\D(\bbone)$ has a zero object. 
\end{defn}

If $\D$ is pointed then so are $\D^B$ and $\D\op$. It follows that the categories $\D(A)$ have zero objects which are preserved by restriction and Kan extension functors. 

In the pointed context, Kan extensions along inclusions of \emph{cosieves} and \emph{sieves} `extend diagrams by zero objects'. Recall that $u\colon A\to B$ is a \textbf{sieve} if it is fully faithful, and for any morphism $b\to u(a)$ in $B$, there exists an $a'\in A$ with $u(a')=b$. Dually, there is the notion of a \textbf{cosieve}, and we know already that Kan extensions along (co)sieves are fully faithful (see \autoref{egs:htpy}).

\begin{lem}[{\cite[Prop.~1.23]{groth:ptstab}}]\label{lem:extbyzero}
Let \D be a pointed derivator and let $u\colon A\to B$ be a sieve. Then $u_\ast\colon\D^A\to\D^B$ induces an equivalence onto the full subderivator of $\D^B$ spanned by all diagrams $X\in\D^B$ such that $X_b$ is zero for all $b\notin u(A)$.
\end{lem}
\noindent
The functor $u_\ast$ is \textbf{right extension by zero}. Dually, left Kan extensions along cosieves give \textbf{left extension by zero}.

In the framework of pointed derivators one can define \textbf{suspensions} and \textbf{loops}, \textbf{cofibers} and \textbf{fibers}, and similar constructions. In particular, we have adjunctions of derivators
\[
(\Sigma,\Omega)\colon\D\rightleftarrows\D\quad\text{and}\quad
(\cof,\fib)\colon\D^{[1]}\rightleftarrows\D^{[1]}
\]
where $[1]$ is the category $(0\to 1)$ with two objects and a unique non-identity morphism.
Let us sketch the construction of the suspension $\Sigma$ and the cofiber $\cof$, the other two functors being dual. The commutative square $\square=[1]^2=[1]\times[1],$
\begin{equation}
\vcenter{
\xymatrix@-.5pc{
    (0,0)\ar[r]\ar[d] &
    (1,0)\ar[d]\\
    (0,1)\ar[r] &
    (1,1),
  }
}
\label{eq:square}
\end{equation}
has full subcategories $i_\ulcorner\colon\ulcorner\to\square$ and $i_\lrcorner\colon\lrcorner\to\square$ obtained by removing $(1,1)$ and $(0,0)$, respectively. Since both inclusions are fully faithful, so are the associated Kan extension functors (see \autoref{egs:htpy}). A square $Q\in\D(\square)$ is \textbf{cocartesian} if it lies in the essential image of $(i_\ulcorner)_!$. Dually, the \textbf{cartesian} squares are precisely the ones in the essential image of $(i_\lrcorner)_\ast$. Given a coherent morphism $(f\colon X\to Y)\in\D([1])$, we obtain a cocartesian square in \D of the form
\[
\xymatrix{
X\ar[r]^-f\ar[r]\ar[d]&Y\ar[d]^-{\cof(f)}\\
0\ar[r]&Z
}
\]
first by right extending by zero and then by left Kan extension. A restriction of this cocartesian square along the obvious functor $[1]\to\square$ defines the cofiber of~$f$. In the special case where $(f\colon X\to 0)$ itself is already obtained by right extension by zero, the cofiber object is called the suspension~$\Sigma X$. Thus there is a defining cocartesian square looking like
\[
\xymatrix{
X\ar[r]\ar[r]\ar[d]&0\ar[d]\\
0\ar[r]&\Sigma X.
}
\]
A \textbf{(coherent) cofiber sequence} is a coherent diagram
\[
\xymatrix{
X\ar[r]\ar[d]&Y\ar[r]\ar[d]&0\ar[d]\\
0\ar[r]&Z\ar[r]&W
}
\]
such that both squares are cocartesian and the two corners vanish as indicated. An arbitrary coherent morphism $(X\to Y)\in\D([1])$ can be extended to a cofiber sequence by a right extension by zero followed by a left Kan extension. (Note that this defines a cofiber sequence functor $\D^{[1]}\to\D^{[2]\times[1]}$ which is an equivalence onto its essential image.) Since the compound square is cocartesian (\cite[Corollary~4.10]{gps:mayer}), the object $W$ is canonically isomorphic to $\Sigma X$. In particular, using this isomorphism we can associate an underlying \textbf{\emph{incoherent} cofiber sequence}
\[
X\to Y\to Z\to \Sigma X\quad \text{in}\;\D(\bbone)
\]
to any coherent cofiber sequence.

\begin{defn}
A pointed derivator is \textbf{stable} if the classes of cocartesian squares and cartesian squares coincide.
\end{defn}
\noindent

The homotopy derivators of stable model categories or stable $\infty$-categories are stable. If \D is stable then so is the shifted derivator $\D^B$ (\cite[Proposition~4.3]{groth:ptstab}) as is $\D\op$. Recall that a derivator is \textbf{strong} if it satisfies:
  \begin{itemize}[leftmargin=4em]
  \item[(Der5)] For any $A$, the induced functor $\D(A\times [1]) \to \D(A)^{[1]}$ is full and essentially surjective.
  \end{itemize}
This property does not play an essential role in the basic \emph{theory} of derivators. It is mainly used if one wants to relate \emph{properties} of stable derivators to the existence of \emph{structure} on its values. The precise form of this axiom depends on the reference: in \cite{heller:htpy} a stronger version is used. Represented derivators and homotopy derivators associated to model categories or $\infty$-categories are strong as are shifts and opposites of strong derivators. 

\begin{thm}[{\cite[Theorem~4.16]{groth:ptstab}}]\label{thm:triang}
Let \D be a strong, stable derivator. Then the categories $\D(A),A\in\cCat,$ admit (canonical) triangulations. 
\end{thm}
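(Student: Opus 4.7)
The plan is to reduce to the case $A=\bbone$ and to verify Verdier's axioms for $\D(\bbone)$ using the cofiber sequence machinery developed earlier in this section. Since $\D^A$ is again a strong stable derivator whenever $\D$ is and its underlying category is $\D(A)$, a uniform construction on underlying categories propagates to every $\D(A)$.

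I would first extract two structural consequences of stability. The suspension functor $\Sigma\colon\D(\bbone)\to\D(\bbone)$ is an equivalence with quasi-inverse $\Omega$, since the defining cocartesian square for $\Sigma X$ is bicartesian in the stable setting. Secondly, the cofiber sequence functor $\D^{[1]}\to\D^{[2]\times[1]}$ is an equivalence onto the full subderivator of diagrams whose two squares are cocartesian and whose upper-right and lower-left entries vanish, so each coherent morphism admits a canonical extension to a cofiber sequence. A triangle $X\to Y\to Z\to\Sigma X$ in $\D(\bbone)$ is then declared \emph{distinguished} if it is isomorphic, as an incoherent diagram, to the underlying triangle of some coherent cofiber sequence.

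Axioms (TR1) and (TR2) follow from this definition. For (TR1), any morphism $f\colon X\to Y$ in $\D(\bbone)$ lifts to a coherent morphism by (Der5) and is then extended to a cofiber sequence; the identity of $X$ produces the trivial triangle. For (TR2), prolonging a coherent cofiber sequence by one further cofiber step yields the rotated sequence $Y\to Z\to\Sigma X\to\Sigma Y$; the identification of the last term uses the pasting lemma for cocartesian squares \cite[Corollary~4.10]{gps:mayer}, and the expected sign comes from the canonical automorphism of $\Sigma X$ swapping the two relevant suspension squares. For (TR4), iterating the cofiber construction on a composable pair $X\to Y\to Z$ produces a suitable coherent iterated cofiber diagram, whose rows, columns, and pasted compound squares assemble into the octahedron.

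The crucial obstacle is (TR3). Given a morphism between the first two terms of two distinguished triangles, (Der5) lifts the underlying incoherent commutative square to an object of $\D([1]\times[1])$, and naturality of the cofiber sequence equivalence propagates the lift to a morphism of the full coherent cofiber sequences; evaluating at the third vertex provides the required fill-in. The subtle point is that (Der5) asserts only fullness and essential surjectivity on morphism categories, not faithfulness, so this fill-in is non-canonical --- but this is precisely the classical non-functoriality of the cone permitted by the triangulated-category axioms. The resulting class of distinguished triangles is nonetheless canonical in the sense that it depends only on the derivator structure of $\D$.
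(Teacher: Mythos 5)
This theorem is not proved in the present paper at all: it is imported verbatim from \cite[Theorem~4.16]{groth:ptstab}, so the only comparison available is with that cited proof, and your sketch follows essentially the same route (distinguished triangles are the underlying diagrams of coherent cofiber sequences, strongness supplies the lifts needed for (TR1) and the fill-ins of (TR3), and rotation and the octahedron come from the composition/cancellation calculus of bicartesian squares, with the sign in (TR2) arising from comparing the two suspension squares). The one step you gloss over is (TR4): (Der5) only gives lifts of single morphisms and fullness of $\D(A\times[1])\to\D(A)^{[1]}$, so a composable pair in $\D(\bbone)$ cannot simply be ``lifted'' to $\D([2])$ by fiat; one first encodes it as a commutative square with an identity edge, lifts that square using (Der5) for $A=\bbone$ and $A=[1]$, and then restricts along $[2]\to[1]^2$ (or works directly with $[1]^2$-shaped coherent diagrams, as the cited proof does) before iterating the cofiber construction. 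With that point made explicit, your outline matches the argument of \cite{groth:ptstab}.
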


These triangulations are canonical in the following sense. Recall that a morphism between stable derivators is exact if it preserves zero object, cartesian and cocartesian squares. For more details on this notion see for example \cite{groth:ptstab}.

\begin{prop}[{\cite[Proposition~4.18]{groth:ptstab}}]\label{thm:triangcan}
Let $F\colon\D\to\E$ be an exact morphism of strong, stable derivators. Then the components $F_A\colon\D(A)\to\E(A)$ can be endowed with the structure of an exact functor with respect to the triangulations of \autoref{thm:triang}.
\end{prop}

Thus, like stable model categories and stable $\infty$-categories, stable derivators provide an enhancement of triangulated categories. 

\begin{notn}
We denote by $\cDER_{\mathrm{St},\mathrm{ex}}\subseteq\cDER$ the $2$-category of stable derivators, exact morphisms, and arbitrary transformations. This is clearly a $2$-category since identity morphisms are exact as are compositions of exact morphisms.
\end{notn}

Recall that the shifting operation defines a $2$-functor
\[
\cCat\op\times\cDER\to\cDER\colon (A,\D)\mapsto \D^A.
\]
In particular, for every $A\in\cCat$ we can restrict the $2$-functor $(-)^A\colon\cDER\to\cDER$ to obtain
\[
(-)^A\colon\cDER_{\mathrm{St},\mathrm{ex}}\to\cDER.
\]
With the idea that stable derivators enhance triangulated categories, the following notion was introduced in \cite{gst:basic}.

\begin{defn}\label{defn:sse}
Two small categories $A$ and $A'$ are \textbf{strongly stably equivalent}, in notation $A\sse A',$ if there is a pseudo-natural equivalence 
\[
\Phi\colon(-)^A\simeq(-)^{A'}\colon\cDER_{\mathrm{St},\mathrm{ex}}\to\cDER.
\]
Such a pseudo-natural equivalence is a \textbf{strong stable equivalence}.
\end{defn}

To be more specific, such a strong stable equivalence $\Phi$ consists of
\begin{enumerate}
\item equivalences of derivators $\Phi_\D\colon\D^A\simeq\D^{A'}$, \D a stable derivator, and
\item for every exact morphism $F\colon\D\to\E$ of stable derivators of an invertible modification $\gamma_F$,
\[
\xymatrix{
\D^A\ar[r]^-{\Phi_\D}\ar[d]_-{F^A}\drtwocell\omit{\cong}&\D^{A'}\ar[d]^-{F^{A'}}\\
\E^A\ar[r]_-{\Phi_\E}&\E^{A'},
}
\]
such that some coherence properties are satisfied.
\end{enumerate}

More generally, in this paper we establish many examples of pseudo-natural equivalences of suitable $2$-functors $T,T'\colon\cDER_{\mathrm{St},\mathrm{ex}}\to\cDER.$ When obvious from context, often we only specify the behavior of these $2$-functors on objects. Moreover, instead of always saying that there is a pseudo-natural equivalence $T\simeq T'$, we simply mention that there is a pseudo-natural equivalence $T(\D)\simeq T'(\D)$ for $\D\in\cDER_{\mathrm{St},\mathrm{ex}}$. In particular, often we do not make explicit the pseudo-naturality isomorphisms.

For every field $k$ there is the stable derivator $\D_k$ associated to the projective model structure on the category of unbounded chain complexes over~$k$. Recall that a module over the path-algebra $kQ$ of a quiver $Q$ is equivalently specified by a functor $Q\to\Mod(k)$, where, strictly speaking, $Q$ is the free category generated by the graph $Q$. This identification extends to chain complexes and implies that $\D_k^Q$ is equivalent to the derivator $\D_{kQ}$ of the path-algebra. In particular, for two strongly stably equivalent quivers $Q$ and~$Q'$ there are exact equivalences $D(kQ)\simeq D(kQ')$ of the underlying derived categories of the path-algebras, showing that the quivers are derived equivalent. 

However, the statement that two categories or quivers are \emph{strongly stably equivalent} is a much stronger statement as such equivalences then also have to exist for the derivator $\D_R$ of a ring~$R$, the derivator $\D_X$ of a (quasi-compact and quasi-separated) scheme~$X$, the derivator $\D_A$ of a differential-graded algebra~$A$, for the derivator $\D_E$ of a (symmetric) ring spectrum~$E$, and other examples arising for example in equivariant stable, motivic stable, or parametrized stable homotopy theory. Moreover, since strong stable equivalences are pseudo-natural with respect to exact morphisms all the above equivalences commute with various restriction and (co)induction of scalar functors as well as Bousfield (co)localizations (see \cite[\S5]{gst:basic} for many examples of stable derivators and references as well as further comments on \autoref{defn:sse}).

In order to show that certain categories or quivers are strongly stably equivalent we use suitable combinations of Kan extension morphisms as in \eqref{eq:Kanadjunction}. For that purpose it is convenient to recall the following lemma which allows us to `detect' (co)cartesian squares in larger coherent diagrams. 

\begin{lem}[{\cite[Prop.~3.10]{groth:ptstab}}]\label{lem:detection}
  Suppose $u\colon C\to B$ and $v\colon \Box \to B$ are functors, with $v$ injective on objects, and let $b = v(1,1)\in B$.
  Suppose furthermore that $b\notin u(C)$, and that the functor $\mathord\ulcorner \to (B\setminus b)/b$ induced by $v$ has a left adjoint.
  Then for any derivator \D and any $X\in \D(C)$, the square $v^* u_! X$ is cocartesian.
\end{lem}

We will also need its generalization to arbitrary \emph{cocones}. Given a small category $A$ let $A^\rhd$ be the \textbf{cocone} on~$A$, i.e., the category which is obtained from $A$ by adjoining a new terminal object $\infty$. The cocone construction $(-)^\rhd$ is obviously functorial in~$A$, and there is a natural fully faithful inclusion $i=i_A\colon A\to A^\rhd$. A cocone $X\in\D^{A^\rhd}$ is \textbf{colimiting} if it lies in the essential image of the left Kan extension functor $(i_A)_!\colon\D^A\to\D^{A^\rhd}$.

\begin{lem}[{\cite[Lemma~4.5]{gps:mayer}}]\label{lem:detectionplus}
Let $A\in\cCat$, and let $u\colon C\to B$, $v\colon A^\rhd\to B$ be functors.
Suppose that there is a full subcategory $B'\subseteq B$ such that
\begin{enumerate}
\item $u(C) \subseteq B'$ and $v(\infty) \notin B'$;
\item $vi\colon A\to B$ factors through the inclusion $B'\subseteq B$; and
\item the functor $A \to B' / v(\infty)$ induced by $v$ has a left adjoint.
\end{enumerate}
Then for any derivator \D and any $X\in\D(C)$, the diagram $v^* u_! X$ is in the essential image of $i_!$.
In particular, $(v^* u_! X)_\infty$ is the colimit of $i^* v^* u_! X$.
\end{lem}

\section{Finite biproducts via $n$-cubes}
\label{sec:biproducts}

For the construction of abstract reflection functors in \S\ref{sec:reflection} we will need a description of finite biproducts in stable derivators by means of certain coherent diagrams. For this purpose, in this section we briefly recall from \cite[\S4.1]{groth:ptstab} the construction of the biproduct of pairs of objects in stable derivators. Then, using basic results on strongly bicartesian $n$-cubes from \cite[Section~8]{gst:basic}, we extend this description to the case of finitely many summands.

Let \D be a stable derivator and let $[2]$ be the category $(0<1<2)$. For the case of two summands, we consider the category $[2]\times[2],$
\[
\xymatrix{
(0,0)\ar[r]\ar[d]&(1,0)\ar[r]\ar[d]&(2,0)\ar[d]\\
(0,1)\ar[r]\ar[d]&(1,1)\ar[r]\ar[d]&(2,1)\ar[d]\\
(0,2)\ar[r]&(1,2)\ar[r]&(2,2),
}
\]
and the following \emph{full} subcategories of $[2]\times[2]$ which we define by listing their objects,
\begin{enumerate}
\item $A_1\colon$ (1,2)-(2,1),
\item $A_2\colon$ (1,2)-(2,2)-(2,1), and
\item $A_3\colon$ (0,2)-(1,2)-(2,2)-(2,1)-(2,0).
\end{enumerate}
There are obvious inclusions $j_1\colon A_1\to A_2,$ $j_2\colon A_2\to A_3,$ and $j_3\colon A_3\to [2]\times[2],$ and the following sequence of Kan extensions
\begin{equation}
\D^{A_1}\stackrel{(j_1)_\ast}{\to}\D^{A_2}\stackrel{(j_2)_!}{\to}\D^{A_3}\stackrel{(j_3)_\ast}{\to}\D^{[2]\times[2]}\label{eq:biprod}
\end{equation}
consists of fully faithful morphisms of derivators (\autoref{egs:htpy}). Moreover, since $j_1$ is the inclusion of a sieve it follows from \autoref{lem:extbyzero} that $(j_1)_\ast$ is right extension by zero. Similarly, $j_2$ is the inclusion of a cosieve and hence $(j_2)_!$ is left extension by zero. Finally, a repeated application of \autoref{lem:detection} implies that $(j_3)_\ast$ amounts to forming four bicartesian squares. We summarize this construction in the following lemma (see \cite[\S4.1]{groth:ptstab} for more details).

\begin{lem}\label{lem:biprod}
Let \D be a stable derivator and let $\D^{[2]\times[2],\mathrm{ex}}\subseteq\D^{[2]\times[2]}$ be the full subderivator spanned by the coherent diagrams vanishing on the corners and making all squares bicartesian. Then \eqref{eq:biprod} induces a pseudo-natural equivalence $\D^{\bbone\sqcup\bbone}\simeq\D^{[2]\times[2],\mathrm{ex}}$.
\end{lem}

We only have to verify that a diagram in the image of \eqref{eq:biprod} also vanishes at the corner $(0,0)$. Note that (Der1) implies that there is a canonical equivalence $\D\times\D\simeq\D^{\bbone\sqcup\bbone}$. By the above, \eqref{eq:biprod} sends $(X,Y)\in\D\times\D\simeq\D^{\bbone\sqcup\bbone}=\D^{A_1}$ to a coherent diagram in $\D^{[2]\times[2]}$ looking like
\begin{equation}
\vcenter{
\xymatrix{
Z\ar[r]\ar[d]&\tilde X\ar[r]\ar[d]&0\ar[d]\\
\tilde Y\ar[r]\ar[d]&B\ar[r]\ar[d]&Y\ar[d]\\
0\ar[r]&X\ar[r]&0.
}
}
\label{eq:biproduct}
\end{equation}
\noindent
Since all squares in this diagram are bicartesian, since composites of bicartesian squares are again bicartesian (\cite[Corollary~4.10]{gps:mayer}) and since pullbacks of isomorphisms are again isomorphisms (\cite[Proposition~3.12]{groth:ptstab}), we obtain isomorphisms
\[
X\cong \tilde X,\qquad Y\cong \tilde Y, \qquad \text{and}\qquad Z\cong 0.
\]
Moreover, as a pullback over a zero object is a product and dually (\cite[Corollary~4.11]{gps:mayer}), it follows that $B$ is a biproduct of $X$ and $Y$. 

The diagram \eqref{eq:biproduct} actually is a coherent version of the biproduct object together with the corresponding inclusions and projections. Thus, \autoref{lem:biprod} can be read as saying that for a stable derivator $\D$, the derivator $\D^{\bbone\sqcup\bbone}\simeq \D\times\D$ is pseudo-naturally equivalent to the \emph{derivator of biproduct diagrams}. We want to emphasize that in \eqref{eq:biproduct} all `length two morphisms' 
\[
(i,0)\to(i,2)\quad\text{and}\quad (0,j)\to(2,j)
\]
are sent to isomorphisms. 

We now discuss a variant of this construction for biproducts of finitely many summands. For this purpose we recall from \cite[\S8]{gst:basic} some basic notation and terminology related to $n$-cubes in derivators. We denote the $n$-cube by $[1]^n=[1]\times\ldots\times[1]$. Note that there is an order-preserving isomorphism between $[1]^n$ and the power set of $\{1,\ldots,n\}$, and we allow ourselves to pass back and forth between these two descriptions without explicit mention. For $0\leq k\leq n$ we denote by $i_{\geq k}\colon[1]^n_{\geq k}\to[1]^n$ the inclusion of the full subcategory spanned by all subsets of cardinality at least $k$; see \autoref{fig:cube-inclusions}. There are similar full subcategories $[1]^n_{=k},[1]^n_{\leq k},[1]^n_{>k},$ and so on. In particular, the full subcategory $[1]^n_{=n-1}\subseteq[1]^n$ is the \emph{discrete} category $n\cdot\bbone=\bbone\sqcup\ldots\sqcup\bbone$ with $n$ objects. 

\begin{figure}
\xymatrix@-1.5pc{
{\phantom{000}} && {\phantom{100}} &&&
{\phantom{000}} && 100 \ar[dr] \ar'[d][dd]&&&
000 \ar[rr] \ar[dr] \ar[dd] && 100 \ar[dr] \ar'[d][dd] \\
& {\phantom{010}} && 110 \ar[dd]&&
& 010 \ar[rr] \ar[dd] && 110 \ar[dd]&&
& 010 \ar[rr] \ar[dd] && 110 \ar[dd]\\
{\phantom{001}} && 101 \ar[dr]&&&
001 \ar[dr] \ar'[r][rr] && 101 \ar[dr]&&&
001 \ar[dr] \ar'[r][rr] && 101 \ar[dr]\\
& 011 \ar[rr] && 111,&&
& 011 \ar[rr] && 111,&&
& 011 \ar[rr] && 111.
}
\caption{The inclusions $[1]^3_{\geq 2}\subseteq[1]^3_{\geq 1}\subseteq[1]^3_{\geq 0}=[1]^3.$}
\label{fig:cube-inclusions}
\end{figure}
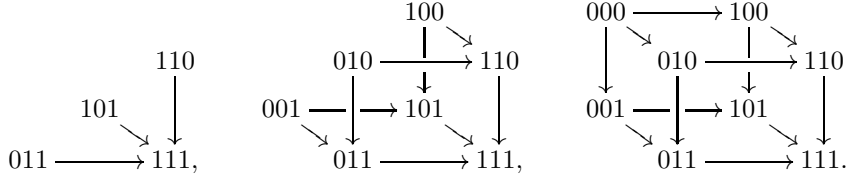

\begin{defn}
Let \D be a derivator. An $n$-cube $X\in\D^{[1]^n}$ is \textbf{strongly cartesian} if it lies in the essential image of $(i_{\geq n-1})_\ast\colon\D^{[1]_{\geq n-1}^n}\to\D^{[1]^n}$. An $n$-cube $X$ is \textbf{cartesian} if it lies in the essential image of $(i_{\geq 1})_\ast$.
\end{defn}

Since $i_{\geq k}$ is fully faithful, the same is true for $(i_{\geq k})_\ast$, and an $n$-cube $X$ lies in the essential image of $(i_{\geq k})_\ast$ if and only if the unit $\eta\colon X\to (i_{\geq k})_\ast(i_{\geq k})^\ast(X)$ is an isomorphism. In particular, $X$ is strongly cartesian if and only if the unit $X\to(i_{\geq n-1})_\ast(i_{\geq n-1})^\ast(X)$ is an isomorphism. 

Inspired by the work of Goodwillie \cite{goodwillie:II}, in \cite[\S8]{gst:basic} we have shown that an $n$-cube in a derivator is strongly cartesian if and only if all \emph{subcubes} are cartesian (see \cite[Theorem~8.4]{gst:basic}) if and only if all \emph{subsquares} are cartesian (see \cite[Corollary~8.12]{gst:basic}). This implies the equivalence of the third and the fourth characterization in the following theorem.

\begin{thm}[{\cite[Theorem~7.1]{gps:mayer},\cite[Corollary~8.13]{gst:basic}}]
The following are equivalent for a pointed derivator \D.
\begin{enumerate}
\item The adjunction $(\Sigma,\Omega)\colon\D(\bbone)\to\D(\bbone)$ is an equivalence.
\item The adjunction $(\cof,\fib)\colon\D([1])\to\D([1])$ is an equivalence.
\item The derivator \D is stable.
\item An $n$-cube in \D, $n\geq 2,$ is strongly cartesian if and only if it is strongly cocartesian. 
\end{enumerate}
\end{thm}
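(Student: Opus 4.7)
The plan is to organize the four equivalences into two groups: first establish (iii) $\Leftrightarrow$ (iv) directly from the $n$-cube machinery of \cite{gst:basic}, and then treat (i) $\Leftrightarrow$ (ii) $\Leftrightarrow$ (iii) following the line of argument of \cite[Theorem~7.1]{gps:mayer}.

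For (iii) $\Rightarrow$ (iv) I would invoke \cite[Corollary~8.12]{gst:basic}, which characterizes strongly cartesian $n$-cubes as those whose $2$-dimensional subsquares are all cartesian, with a dual statement for strongly cocartesian $n$-cubes. In a stable derivator cartesian and cocartesian squares coincide by definition, so the two notions of strongly (co)cartesian $n$-cube agree. The converse (iv) $\Rightarrow$ (iii) is immediate by specialization to $n=2$, since for $n=2$ a strongly (co)cartesian $n$-cube is the same as a (co)cartesian square.

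For (iii) $\Rightarrow$ (ii) I would use that in a stable derivator the cofiber sequence construction $\D^{[1]}\to\D^{[2]\times[1]}$ produces diagrams whose two constituent squares are bicartesian and whose compound square is again bicartesian. Reading off the cofiber and the fiber from such an extended diagram, and using (Der2) together with pointwise detection, yields mutually inverse isomorphisms $\fib\circ\cof\cong\id$ and $\cof\circ\fib\cong\id$ on $\D^{[1]}$. Specializing to morphisms of the form $X\to 0$, respectively $0\to X$, gives (iii) $\Rightarrow$ (i).

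The hardest direction is (i) $\Rightarrow$ (iii) (the equivalent (ii) $\Rightarrow$ (iii) is similar). I would begin by promoting the invertibility of $(\Sigma,\Omega)$ from $\D(\bbone)$ to every $\D(B)$ using (Der2), the parametrized form of the Kan extension adjunctions, and the fact that $\Sigma$ and $\Omega$ commute with restriction up to the canonical pseudo-naturality isomorphisms. Given then an arbitrary cocartesian square $Q\in\D(\square)$, I would extend it via right extension by zero followed by left Kan extension to a larger coherent diagram of the biproduct-cube shape studied in \autoref{sec:biproducts}, whose iterated (co)fibers compute $\Sigma$ of the initial corner. Tracking the natural mate that compares $Q$ to its cartesian version built from the same outer data, one can identify this mate (after applying suitable loops or suspensions) with the unit or counit of $(\Sigma,\Omega)$, which is invertible by assumption; hence $Q$ is also cartesian. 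The main obstacle is arranging this comparison in a sufficiently canonical way that the relevant mate is provably the adjunction (co)unit rather than some unrelated map -- this is the technical content of \cite[Theorem~7.1]{gps:mayer}, whose argument I would follow in detail.
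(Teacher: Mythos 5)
Your proposal is correct and takes essentially the same route as the paper, which does not prove this theorem itself but imports it from \cite[Theorem~7.1]{gps:mayer} and \cite[Corollary~8.13]{gst:basic}: the only argument the paper supplies is precisely your first step, namely that (iii)~$\Leftrightarrow$~(iv) follows from the subsquare characterization of strongly (co)cartesian $n$-cubes in \cite[Corollary~8.12]{gst:basic} together with the coincidence of cartesian and cocartesian squares, with (iv)~$\Rightarrow$~(iii) by specializing to $n=2$. For the equivalences (i)~$\Leftrightarrow$~(ii)~$\Leftrightarrow$~(iii) you defer, as the paper does, to the argument of \cite[Theorem~7.1]{gps:mayer}, so there is nothing further to compare.
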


An $n$-cube which is simultaneously strongly cartesian and strongly cocartesian is \textbf{strongly bicartesian}. In the case of $n=2$ this reduces to the classical notion of a \textbf{bicartesian square}. Similar to bicartesian squares also strongly bicartesian $n$-cubes enjoy a 2-out-of-3 property with respect to composition and cancellation (see \cite[\S8]{gst:basic} for the case of $n$-cubes).

With the construction of coherent diagrams for finite biproducts in mind, let us consider the functors
\begin{equation}
\vcenter{
\xymatrix{
n\cdot\bbone=[1]^n_{=n-1}\ar[r]^-{w_1}&[1]^n_{\geq n-1}\ar[r]^-{w_2}&[1]^n\ar[r]^-{w_3}& I\ar[r]^-{w_4}& [2]^n.
}
}
\label{eq:biproductsshape}
\end{equation}
Here $w_1$ and $w_2=i_{\geq n-1}$ are the obvious fully faithful inclusions and $w_4w_3\colon[1]^n\to[2]^n$ is the inclusion as the $n$-cube 
\[
[(1,\ldots,1),(2,\ldots,2)]=[1,2]\times\ldots\times[1,2].
\] 
Finally, $I\subseteq [2]^n$ is the full subcategory spanned by $[(1,\ldots,1),(2,\ldots,2)]$ and the $n$ additional corners 
\begin{equation}
(0,2,2,\ldots,2,2),\quad (2,0,2,2,\ldots,2,2),\quad\ldots,\quad (2,2,\ldots,2,2,0),
\label{eq:zeros}
\end{equation}
while $w_3$ and $w_4$ are the fully faithful inclusions arising from the obvious factorization. Since all four functors are fully faithful the same is true for the Kan extension functors
\begin{equation}
\D^{n\cdot\bbone}=\D^{[1]^n_{=n-1}}\stackrel{(w_1)_\ast}{\to}\D^{[1]^n_{\geq n-1}}\stackrel{(w_2)_\ast}{\to}
\D^{[1]^n}\stackrel{(w_3)_!}{\to}\D^I\stackrel{(w_4)_\ast}{\to}\D^{[2]^n}.
\label{eq:biproducts}
\end{equation}
We show next that in the case of a stable derivator \D the essential image consists of the full subderivator $\D^{[2]^n,\mathrm{ex}}\subseteq\D^{[2]^n}$ spanned by the coherent diagrams such that
\begin{enumerate}
\item all subcubes are strongly bicartesian,
\item the values at all corners are trivial, and
\item the maps $(i_1,\ldots,i_{k-1},0,i_{k+1},\ldots,i_n)\to (i_1,\ldots,i_{k-1},2,i_{k+1},\ldots,i_n)$ are sent to isomorphisms for all $i_1,\ldots,i_{k-1},i_{k+1},\ldots,i_n$ and $k$.
\end{enumerate}

The proposition will justify the claim that $\D^{[2]^n,\mathrm{ex}}$ is a derivator. Note that property (iii) is a consequence of (i) and (ii) but we emphasize it here as this property will prove important later on.

\begin{prop}\label{prop:biproducts}
Let \D be a stable derivator. Then \eqref{eq:biproducts} consists of fully faithful functors and induces a pseudo-natural equivalence $\D^{n\cdot\bbone}\to\D^{[2]^n,\mathrm{ex}}$.
\end{prop}
\begin{proof}
Since $w_1\colon[1]_{= n-1}^n\to[1]_{\geq n-1}^n$ is the inclusion of a sieve, $(w_1)_\ast$ is right extension by zero (\autoref{lem:extbyzero}). It follows that $(w_1)_\ast\colon\D^{[1]_{=n-1}^n}\to\D^{[1]_{\geq n-1}^n}$ induces an equivalence on the full subderivator spanned by diagrams satisfying this vanishing condition. By definition of strongly cartesian $n$-cubes, the composition $(w_2)_\ast(w_1)_\ast\colon\D^{[1]_{=n-1}^n}\to\D^{[1]^n}$ induces an equivalence onto the full subderivator of strongly cartesian $n$-cubes which vanish on the final object. Note next that $w_3\colon[1]^n\to I$ is the inclusion of a cosieve. Hence $(w_3)_!\colon\D^{[1]^n}\to\D^I$ is left extension by zero and induces an equivalence onto the full subderivator of $\D^I$ spanned by the diagrams which vanish on the corners \eqref{eq:zeros} (again by \autoref{lem:extbyzero}). This equivalence restricts further to an equivalence of the respective subderivators spanned by diagrams vanishing on the final object and making the $n$-cube strongly cartesian. It remains to understand the effect of $(w_4)_\ast$. It follows from a repeated application of \autoref{lem:detectionplus} that $(w_4)_\ast\colon\D^I\to\D^{[2]^n}$ amounts to adding $2^n-1$ strongly bicartesian $n$-cubes. The verification of this is a straightforward higher-dimensional variant of the corresponding verifications in dimension two as carried out in \cite[\S4.1]{groth:ptstab}, and we leave the details to the reader.

As an upshot, the composition \eqref{eq:biproducts} induces an equivalence onto the full subderivator of $\D^{[2]^n}$ spanned by the diagrams making all $n$-cubes strongly bicartesian (use \cite[Corollary~8.13]{gst:basic}) and vanishing on the corners \eqref{eq:zeros} as well as on the final vertex. Note that this implies that all subsquares are then also bicartesian (\cite[Corollary~8.12]{gst:basic}). As isomorphisms in coherent diagrams are stable under pullbacks, it follows from the vanishing conditions of the diagrams that also condition (iii) in the definition of $\D^{[2]^n,\mathrm{ex}}$ is satisfied, and hence that these diagrams actually vanish on all corners. Finally, these equivalences assemble to a pseudo-natural equivalence since all the Kan extensions are preserved by exact morphisms of derivators (see also~\cite[\S7]{ps:linearity}). 
\end{proof}

Note that the objects of $\D^{[2]^n,\mathrm{ex}}$ are coherent diagrams for finite biproducts $X_1\oplus\ldots\oplus X_n$ which also encode all partial biproducts together with all the inclusion and projection maps. Thus, the proposition makes precise the expected result that in the stable case the derivator $\D^{n\cdot\bbone}\simeq\D\times\ldots\times\D$ is pseudo-naturally equivalent to the derivator $\D^{[2]^n,\mathrm{ex}}$ of $n$-fold biproduct diagrams. We will use a variant of this observation in our construction of abstract reflection functors in~\S\ref{sec:reflection}.

\section{Reflection functors in representation theory}
\label{sec:reflectrep}

The main aim of this paper is to construct reflection functors for arbitrary stable derivators, and to show that they give rise to strongly stably equivalent quivers. We begin by recalling some details about the classical construction at the level of abelian categories of representations. By a quiver $Q$ we formally mean a quadruple $(Q_0,Q_1,s,t)$ consisting of a set of vertices $Q_0$, a set of arrows $Q_1$, and two maps $s,t\colon Q_1\to Q_0$ assigning to each arrow its source and target vertex, respectively. An \textbf{oriented tree} is a connected quiver without unoriented cycles. In other words, an oriented tree is obtained from an ordinary tree by equipping each edge with an orientation. All our quivers in the sequel will be finite; that is $Q_0$ and $Q_1$ will be finite sets. 

Given a quiver~$Q$ and a vertex $q\in Q$, there is the \textbf{reflected quiver} $\sigma_qQ$ obtained from $Q$ by changing the orientations of all arrows adjacent to~$q$. A vertex~$q_0$ of a quiver is a \textbf{sink} or a \textbf{source} if all edges adjacent to it have~$q_0$ as their target or source, respectively. 

These reflections at sources and sinks and the associated reflection functors between the respective module categories have played an important role in representation theory, e.g.,~in the classification of quivers of finite representation type (see for example \cite{gabriel:unzerlegbar,BGP:reflection,happel:dynkin}). For more details about tilting theory itself we refer to \cite{tilting}.

The classical construction of the reflection functors in the case of a source $q_0$ is as follows. Let $Q'=\sigma_{q_0}Q$ be the reflected quiver and let $R$ be an arbitrary ground ring. If $M\colon Q\to\Mod(R)$ is a representation, then the \textbf{reflection functor} 
\[
s_{q_0}^-\colon\Mod(RQ)\to\Mod(RQ')
\]
sends $M$ to the following representation $M'=s_{q_0}^-(M)\colon Q'\to \Mod(R)$. On vertices we have $M'_{q'}=M_{q'}$ for all $q'\neq q_0$, and similarly $M'_f=M_f$ on all edges which are not adjacent to $q_0$. If $q_0\to q_i,1\leq i\leq n,$ is the set of morphisms adjacent to $q_0$ in $Q$, then $M'_{q_0}$ is defined as
\begin{equation}
M'_{q_0}=\ncoker\big(M_{q_0}\to\bigoplus_{i=1,\ldots,n}M_{q_i}\big).
\label{eq:reflection}
\end{equation}
The structure maps $M'_{q_j}\to M'_{q_0}$ in the reflected direction are given by 
\begin{equation}
\xymatrix{
M_{q_j}\ar[r]^-{\iota_j}&\bigoplus_{i=1,\ldots,n}M_{q_i}\to M'_{q_0},
}
\label{eq:clreflect}
\end{equation}
where the undecorated morphism is the canonical map to the cokernel. It is easy to use the universal property of the cokernel to define this reflection functor $s_{q_0}^-$ on morphisms of representations. The case of a sink is dual. In particular, since $q_0$ is a sink in $Q'$, we obtain a functor
\[
s_{q_0}^+\colon\Mod(RQ')\to\Mod(RQ)
\]
which is easily checked to be a right adjoint to $s_{q_0}^-$.

Thus the situation is roughly summarized by the following diagram in which for the sake of simplicity we suppressed any additional branches which could potentially be attached to the vertices $M_{q_i},i\geq 1:$

\begin{equation}
\vcenter{
\xymatrix{
&&&M_{q_1}& \\
&M_{q_0}\ar@{-->}@/_0.8pc/[dl]\ar[r]&M_{q_1}\oplus M_{q_2}\oplus M_{q_3}\ar@/^0.8pc/[ur]^-{\pi_1}\ar@/_0.8pc/[dr]_-{\pi_3}\ar[rr]^-{\pi_2}\ar@{-->}@/_0.8pc/[ld]&&M_{q_2}\\
0\ar@{-->}[r]&M'_{q_0}&&M_{q_3}&
}}\label{eq:reflectionold}
\end{equation}

The adjunction $(s_{q_0}^-,s_{q_0}^+)\colon\Mod(RQ)\rightleftarrows\Mod(RQ')$ is \emph{not} an equivalence of abelian categories of representations (for more detail see e.g.~\cite{ASS:representation}). However, the following result was established by Happel. 

\begin{thm}[{\cite{happel:fdalgebra}}]\label{thm:happel}
Let $Q$ be a quiver without oriented cycles and let $q_0\in Q$ be a source. Then over a field~$k$, the reflection functors $s_{q_0}^-$ and $s_{q_0}^+$ induce a pair of inverse exact equivalences
\[
\mathbf{L}s_{q_0}^-\colon D(kQ) \rightleftarrows D(kQ')\; \colon\!\mathbf{R}s_{q_0}^+
\]
of derived categories.
\end{thm}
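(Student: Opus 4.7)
The natural route is Happel's original tilting-theoretic strategy: realize the derived reflection functors as derived $\Hom$ and tensor product against an APR-tilting module.

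The first step is to construct the tilting module. Let $P_q$ denote the indecomposable projective $kQ$-module at vertex $q$, and let $q_0 \to q_i$ ($i=1,\dots,n$) be the arrows out of the source. Define $T_{q_0}$ as the cokernel of the natural map
\[
P_{q_0} \longrightarrow \bigoplus_{i=1}^n P_{q_i}
\]
whose components are multiplication by the arrows; acyclicity of $Q$ ensures $P_{q_0}$ is a simple injective and the map is a monomorphism. Set $T = T_{q_0} \oplus \bigoplus_{q\neq q_0} P_q$. The key claim is that $T$ is a classical tilting module of projective dimension at most one: one checks $\mathrm{Ext}^1_{kQ}(T,T) = 0$ using the acyclicity hypothesis, produces a short exact sequence $0 \to kQ \to T^0 \to T^1 \to 0$ with $T^0, T^1 \in \mathrm{add}(T)$ by splicing the defining sequence of $T_{q_0}$, and identifies $\End_{kQ}(T)^{op} \cong kQ'$ by a direct calculation with paths (this is where the reflected orientation at $q_0$ emerges).

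With $T$ in hand, the Happel--Brenner--Butler theorem (alternatively, Rickard's Morita theorem for derived categories) supplies an exact equivalence $R\Hom_{kQ}(T,-)\colon D(kQ) \xrightarrow{\simeq} D(kQ')$, with quasi-inverse $(-)\otimes^L_{kQ'} T$. It remains to identify this adjoint pair with $\mathbf{L}s_{q_0}^-$ and $\mathbf{R}s_{q_0}^+$. A direct comparison on the indecomposable projectives $P_q$ for $q\neq q_0$ is essentially tautological, while on $P_{q_0}$ the identification uses precisely the defining cokernel sequence of $T_{q_0}$ together with the explicit formula \eqref{eq:reflection} and its structure maps \eqref{eq:clreflect}. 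One then extends by naturality and exactness to all of $D(kQ)$.

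The main obstacle is this final identification step: matching the two derived functors on enough objects and propagating the comparison functorially through the derived category requires nontrivial bookkeeping, especially to pin down the natural isomorphism compatibly on all morphisms. An alternative strategy, more in line with the present paper, would bypass tilting theory entirely and construct the reflection as a coherent morphism $\D_k^Q \to \D_k^{Q'}$ using the biproduct cube machinery of \autoref{prop:biproducts} together with a cofiber construction at $q_0$; Happel's theorem would then appear as a special case of a much more general derivator-level statement about strongly stably equivalent quivers, which is in any case the paper's stated goal.
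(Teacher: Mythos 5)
Your overall strategy---realize the derived reflection functors via an APR tilting module and then quote Happel/Brenner--Butler/Rickard---is exactly the route the paper itself points to: note that the paper does not prove this theorem at all, but cites Happel and sketches precisely this tilting-theoretic argument in \autoref{rem:tilting-history} (including the passage from $D^b$ to the unbounded derived category via compact objects, which your appeal to Rickard also covers). So the plan is sound, but two concrete points in your execution are crossed up and would break the identification step you single out as the main obstacle.

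First, in the convention the paper actually uses ($kQ$-modules are covariant functors $Q\to\Mod(k)$, which is what makes $s^-_{q_0}$ the cokernel construction \eqref{eq:reflection}), one has $\Hom_{kQ}(P_{q_0},P_{q_i})\cong (P_{q_i})_{q_0}=0$, since $q_0$ is a source and there are no paths from $q_i$ to $q_0$. So the map you use to define $T_{q_0}$ is zero, and your $T$ degenerates to a non-sincere module which is not tilting; moreover $P_{q_0}$ is neither simple nor injective here (the simple \emph{injective} at the source is $S_{q_0}=I_{q_0}$, while $P_{q_0}$ is the largest projective). The APR module must be built on the other side: over $kQ'$, where $q_0$ is a sink, one has $0\to P'_{q_0}\to\bigoplus_i P'_{q_i}\to \tau^{-1}S'_{q_0}\to 0$ and $T=\tau^{-1}S'_{q_0}\oplus\bigoplus_{q\neq q_0}P'_q$, a $kQ'$-$kQ$-bimodule with endomorphism ring $kQ$ up to opposites---this is the bimodule of \autoref{rem:tilting-history}. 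Second, your matching of adjoints has the wrong variance: the Hom-type functor is left exact and recovers the kernel-type reflection, so the correct identifications are $\mathbf{R}\Hom_{kQ'}(T,-)\simeq\mathbf{R}s^+_{q_0}$ and $T\otimes^{\mathbf{L}}_{kQ}-\simeq\mathbf{L}s^-_{q_0}$; identifying a right derived Hom out of $D(kQ)$ with $\mathbf{L}s^-_{q_0}$, the left derived functor of the right exact cokernel construction, is not what the classical comparison gives (at the abelian level $\Hom(T,-)\cong s^+$, not $s^-$). Once the bimodule is placed on the $kQ'$ side and left adjoints are matched with left adjoints, the rest of your outline (check on projectives away from $q_0$, Happel's equivalence on $D^b$, extension to $D$) goes through and coincides with the argument recorded in \autoref{rem:tilting-history}; your closing remark that the paper's own machinery reproves the statement in vastly greater generality is correct, but that is \autoref{thm:reflection}, not the proof of the cited theorem itself.
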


\begin{rmk} \label{rem:tilting-history}
Let us briefly recall how the theorem has been historically proved. While attempting to generalize the reflection functors, Auslander, Platzeck and Reiten noted in~\cite{APR79} that there exists a certain bimodule ${}_{kQ'}T_{kQ}$ such that $s_{q_0}^+$ is naturally equivalent to $\hom_{kQ'}(T,-)$. Consequently, \emph{tilting functors} were defined by Brenner and Butler in~\cite{BB80} and \emph{tilting modules} by Happel and Ringel in~\cite{HR82}. In their terminology, $(T \otimes_{kQ} -, \hom_{kQ'}(T,-))$ is a pair of tilting functors and ${}_{kQ'}T$ and $T_{kQ}$ are tilting modules.

Now the crucial observation due to Happel~\cite{happel:fdalgebra} is that the derived functor $\mathbf{R}\!\hom_{kQ'}(T,-)$ induces an (exact) equivalence between the bounded derived categories $D^b(kQ)$ and $D^b(kQ')$. Since the latter categories are precisely the categories of compact objects of $D(kQ')$ and $D(kQ)$, respectively, an easy abstract argument shows that $\mathbf{R}\!\hom_{kQ'}(T,-)$ must also be an equivalence between $D(kQ')$ and $D(kQ)$.
Finally, $(s_{q_0}^-,s_{q_0}^+)$ is checked to be a Quillen adjunction with respect to the projective model structure on $\Ch(kQ)$  (see~\cite[Definition 2.3.3 and Theorem 2.3.11]{hovey:modelcats}) and the injective model structure on $\Ch(kQ')$ (see~\cite[Definition 2.3.12 and Theorem 2.3.13]{hovey:modelcats}). In combination with the result of Happel this implies that $(s_{q_0}^-,s_{q_0}^+)$ even is a Quillen equivalence.
\end{rmk}

In this paper we obtain for oriented trees a generalization of this result which is valid in the context of an arbitrary stable derivator. More precisely, we establish as \autoref{thm:reflection} that the quivers $Q$ and $Q'$ are not only derived equivalent but actually \emph{strongly stably equivalent}. In particular, this implies variants of Happel's result for representations over an arbitrary ground ring, for quasi-coherent modules on schemes, in the differential-graded context, and in the spectral context. Moreover, these equivalences are pseudo-natural with respect to exact morphisms (for more comments on the added generality see \cite[\S5]{gst:basic}).

\begin{figure}
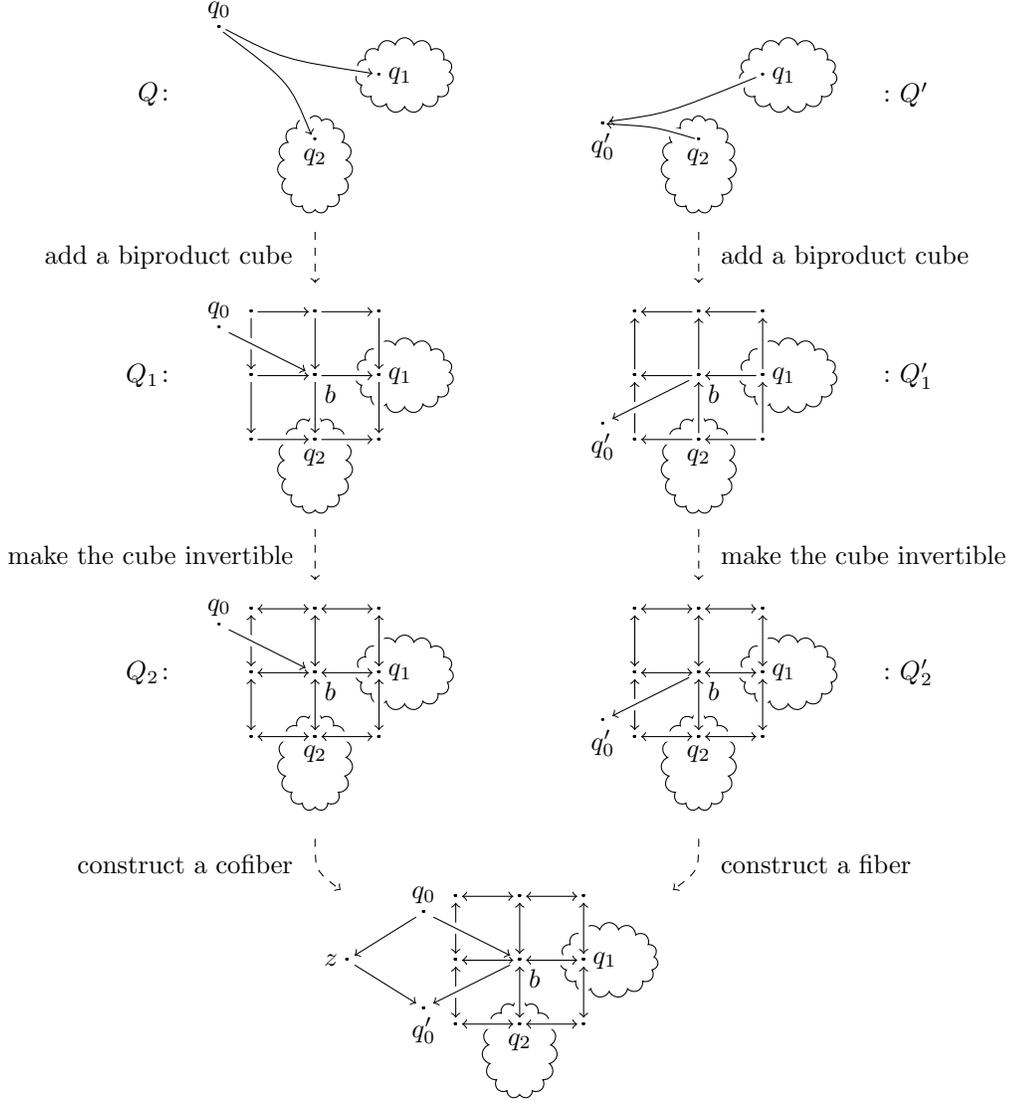

\centering
\TikzReflectionStrategy
\caption{Reflection functors using `invertible $n$-cubes', illustrated on a source/sink of valence two. }
\label{fig:reflection}
\end{figure}

The tricky point in the construction of similar reflection functors for abstract stable derivators (and hence in the context of homotopy coherent diagrams) is that one cannot simply take a morphism in such a diagram and replace it by a map in the opposite direction. Note that this is what we do in the classical case. First, in the definition of $M'_{q_0}$ we use that $M_{q_1}\oplus\ldots\oplus M_{q_n}$ enjoys the universal property of a \emph{product} (see \eqref{eq:reflection}). Then we use that it also is a \emph{coproduct} coming with canonical inclusion maps which allows us to define $M'$ on edges (see \eqref{eq:clreflect}). In order to mimic this step of `replacing' the projections of the biproduct by the inclusions in the context of abstract stable derivators we have to work a bit harder. The strategy consists of the following steps and is illustrated in \autoref{fig:reflection} (where the additional branches of the tree are depicted as `clouds').

\begin{enumerate}
\item \label{step:one} Extend the quiver $Q$ to a category by adding  $[2]^n$, an $n$-cube of length two. At the level of representations this amounts to gluing in a representation of $[2]^n$ which is a finite biproduct diagram for $M_{q_1}\oplus\ldots\oplus M_{q_n}$. This will be achieved by a variant of \autoref{prop:biproducts}.
\item \label{step:two} As in \autoref{prop:biproducts}, it will be true that the extended representations send all `length two morphisms' in the $n$-cube $[2]^n$ to isomorphisms. This suggests that the representations should be restrictions of representations of the category obtained by formally inverting all such `length two morphisms'.
Taking the cube $[2]^n$ alone, we show in \S\ref{sec:epibiprod} that this localization $q\colon[2]^n\to R^n$ is a \emph{homotopical epimorphism} (a notion we introduce in \S\ref{sec:epi}), and hence induces a fully faithful restriction functor $q^\ast\colon\D^{R^n}\to\D^{[2]^n}$. The essential image of~$q^\ast$ consists precisely of the diagrams which invert all `length two morphisms' (see \autoref{cor:invhyper} and \autoref{cor:invbiprod}).
\item \label{step:three} In order to check that this localization step interacts nicely with additional pieces of the representations attached to the cube, we introduce the concept of a \emph{one-point extension} (see \S\ref{sec:onepoint}). The key point is that one-point extensions of homotopical epimorphisms are again homotopical epimorphisms in a way that we control the corresponding essential images of the restriction functors (see \autoref{thm:onepointessim}). This allows us to inductively take into account the rest of the representations.
\item \label{step:four} Finally, we mimic the cokernel construction by adding the cofiber of the map from the source of the original quiver to the object supporting the biproduct. Observe that this final category $\tilde{Q}$ contains both the original quiver $Q$ and the reflected quiver $Q'$ as subcategories (see \autoref{fig:reflection}) -- this was the point of passing to invertible $n$-cubes. Moreover, performing dual constructions starting from either side one obtains the same representations of $\tilde{Q}$. This is carried out in~\S\ref{sec:reflection}, and shows that $Q$ and $Q'$ are strongly stably equivalent.
\end{enumerate}

\section{Homotopical epimorphisms}
\label{sec:epi}

In the theory of derivators one constantly uses the fact that Kan extensions along fully faithful functors are again fully faithful (\autoref{egs:htpy}). The following definition captures the case of fully faithful restriction functors.

\begin{defn}
Let $A,B\in\cCat$. A functor $u\colon A\to B$ is a \textbf{homotopical epimorphism} if the commutative square
\begin{equation}
  \vcenter{\xymatrix{
      A\ar[r]^u\ar[d]_u &
      B\ar[d]^=\\
      B\ar[r]_= &
      B
    }}\label{eq:hoepisq}
\end{equation}
is homotopy exact.
\end{defn}

Thus, a functor $u\colon A\to B$ is a homotopical epimorphism if and only if the counit $\epsilon\colon u_!u^\ast\to\id$ is a natural isomorphism, i.e., if and only if the restriction functor $u^\ast\colon\D(B)\to\D(A)$ is fully faithful for any derivator~\D. Moreover, an object $X\in\D(A)$ lies in the essential image of $u^\ast$ if and only if the unit $\eta\colon\id\to u^\ast u_!$ is an isomorphism on~$X$. Using right Kan extensions instead, $u$ is a homotopical epimorphism if and only if $\eta\colon\id\to u_\ast u^\ast$ is an isomorphism. In this case $X$ lies in the essential image of $u^\ast$ if and only if $\epsilon\colon u^\ast u_\ast\to\id$ is an isomorphism on~$X$.

\begin{rmk}
\begin{enumerate}
\item The definition of a homotopical epimorphism seems to depend on the notion of a derivator, but this is not the case. It is immediate from \cite[Theorem~3.16]{gps:mayer} (which in turn relies essentially on work of Heller \cite{heller:htpy} and Cisinski \cite{cisinski:presheaves}) that $u\colon A\to B$ is a homotopical epimorphism if and only if the functor 
\[
u^\ast\colon \Ho(\sset^B)\to\Ho(\sset^A)
\]
is fully faithful. Here $\sset$ is the category of simplicial sets, and the simplicial presheaf categories are endowed with projective Kan--Quillen model structures \cite{bousfieldkan}. Thus the notion `homotopical epimorphism' only depends on the classical homotopy theory of spaces.
\item In principle, one could consider the more general definition of a \emph{\D-homotopical epimorphism}, i.e., a functor $u\colon A\to B$ such that $u^\ast\colon \D(B)\to\D(A)$ is fully faithful only for a specific derivator (or similarly for a specific class of derivators). However, in this paper we will have no use for this more general concept.
\end{enumerate}
\end{rmk}

\begin{rmk}
The motivation for this terminology comes from the following relation to \emph{homological epimorphisms} in homological algebra. To start with, a \emph{ring epimorphism} $u\colon A \to B$ is simply an epimorphism in the category of rings. Ring epimorphisms are characterized by the property that the forgetful functor $u^\ast\colon \Mod(B) \to \Mod(A)$ is fully faithful; see~\cite{Sil67,Stor73,GdlP87}. A \textbf{homological epimorphism} of rings is a homological analogue of the concept and has been first studied by Geigle and Lenzing~\cite[\S4]{GL91}. It is by definition a ring homomorphism $u\colon A \to B$ such that the induced forgetful functor $u^\ast\colon D(B) \to D(A)$ is fully faithful. Recently, Pauksztello~\cite{Pauk09} and Nicolas and Saor\'{\i}n~\cite[\S4]{NS09} studied the situation when a morphism $u\colon A\to B$ of differential-graded algebras or small differential-graded categories induces a fully faithful functor $u^\ast\colon D(B) \to D(A)$ between the derived categories. They again called the corresponding notion a homological epimorphism of dg-algebras or dg-categories, respectively, and showed that it is intimately related to smashing localizations of derived categories. In view of the previous remark, our concept of homotopical epimorphism is a direct generalization of homological epimorphisms to the simplicial context.
\end{rmk}

Let us collect a few examples and closure properties of homotopical epimorphisms.

\begin{prop}\label{prop:basicepis}
~ 
\begin{enumerate}
\item Homotopical epimorphisms are stable under compositions and contain the identities. The opposite of a homotopical epimorphism is again a homotopical epimorphism. If $u$ is naturally isomorphic to $v$, then $u$ is a homotopical epimorphism if and only if $v$ is.
\item Equivalences of categories are homotopical epimorphisms. More generally, a functor admitting a fully faithful left adjoint (i.e., a coreflective colocalization) or a fully faithful right adjoint (i.e., a reflective localization) is a homotopical epimorphism.
\item Disjoint unions and finite products of homotopical epimorphisms are again homotopical epimorphisms.
\end{enumerate}
\end{prop}
\begin{proof}
The statements in (i) are immediate. As for (ii), let $u\colon A\to B$ have a fully faithful left adjoint $v\colon B\to A$ and let $\eta\colon\id \to uv$ and $\epsilon\colon vu\to \id$ be the unit and the counit. The fully faithfulness of $v$ is equivalent to $\eta$ being an isomorphism.  For every derivator \D, we obtain an adjunction $(u^\ast,v^\ast)\colon\D^B\rightleftarrows\D^A$ with unit $\eta^\ast\colon\id\to v^\ast u^\ast$ and counit $\epsilon^\ast\colon u^\ast v^\ast\to\id$. Thus, the left adjoint $u^\ast$ is fully faithful and this establishes the claims in (ii). A coproduct of homotopical epimorphisms is again a homotopical epimorphism by (Der1). Finally, note that the restriction functor along $u_1\times u_2\colon A_1\times A_2\to B_1\times B_2$ factors as
\[
\D^{B_1\times B_2}\cong(\D^{B_1})^{B_2}\stackrel{u_2^\ast}{\to}(\D^{B_1})^{A_2}\cong(\D^{A_2})^{B_1}\stackrel{u_1^\ast}{\to}(\D^{A_2})^{A_1}\cong \D^{A_1\times A_2}.
\]
Thus, if $u_1,u_2$ both are homotopical epimorphisms then the two restriction functors are fully faithful and $u_1\times u_2$ is also a homotopical epimorphism.
\end{proof}

We will need a combinatorial detection criterion for homotopical epimorphisms which allows us to establish examples which are not covered by \autoref{prop:basicepis}. This is obtained by specializing a more general detection criterion for homotopy exact squares from \cite{gps:mayer}, which we recall next. Thus, let us again consider a natural transformation
\begin{equation}
  \vcenter{\xymatrix{
      D\ar[r]^p\ar[d]_q \drtwocell\omit{\alpha} &
      A\ar[d]^u\\
      B\ar[r]_v &
      C
    }}
\label{eq:htpyexact}
\end{equation}
living in a square of small categories.

\begin{defn}
For $a\in A$, $b\in B$, and $\gamma\colon u(a)\to v(b)$, let
  \((a/D/b)_\gamma\)
  be the category consisting of triples $(d\in D, a\xto{\phi} p(d), q(d) \xto{\psi}b)$ such that 
\[
\gamma= v\psi \circ \alpha_d \circ u\phi \colon u(a)\to up(d)\to vq(d)\to v(b).
\]
Morphisms are morphism in $D$ making the obvious triangles in $A$ and $B$ commute.
\end{defn}

The category \((a/D/b)_\gamma\) consists of certain two-sided factorizations of~$\gamma$ through components of the natural transformation~$\alpha$. We will have a use for this category both in this generality but also under the simplifying assumptions that $\alpha,u,$ and $v$ are identities.

Recall that associated to a small category~$E$ there is the simplicial set $N(E),$ the \emph{nerve} of~$E$, which in degree~$n$ is the set of strings of $n$ composable morphisms in~$E.$ 

\begin{thm}[{\cite[Theorem~3.16]{gps:mayer}}]\label{thm:hoexchar}
The square~\eqref{eq:htpyexact} is homotopy exact if and only if the nerve of $(a/D/b)_\gamma$ is weakly contractible for all $a$, $b$, and $\gamma$.
\end{thm}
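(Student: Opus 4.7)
The plan is to reduce to a universal test derivator and then analyze the canonical mate explicitly, fiberwise over the hom-set $\Hom_C(u(a),v(b))$. First, by the universality results of Heller~\cite{heller:htpy} refined by Cisinski~\cite{cisinski:presheaves}, a square of small categories is homotopy exact in every derivator if and only if it is homotopy exact in the particular derivator $\ho(\sset^{(-)\op})$ of simplicial presheaves with pointwise weak equivalences. So I would fix this derivator and ask when the canonical mate $q_!p^*X \to v^*u_!X$ is an isomorphism for all $X$.

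Second, both functors preserve homotopy colimits in the $X$-variable, and every simplicial presheaf on $A$ is canonically a homotopy colimit of representables $h_a = \Hom_A(a,-)$. Hence it suffices to test on $X = h_a$ for each $a\in A$. On the right, left Kan extension is compatible with the Yoneda embedding, giving $u_!h_a \simeq h_{u(a)}$ and therefore
\[
(v^*u_!h_a)_b \;\simeq\; \Hom_C(u(a), v(b)),
\]
a discrete space. On the left, the pointwise formula (Der4) together with the standard fact that a homotopy colimit of a set-valued diagram is the nerve of its Grothendieck construction yields
\[
(q_!p^*h_a)_b \;\simeq\; N\bigl(a/D/b\bigr) \;=\; \coprod_{\gamma\colon u(a)\to v(b)} N\bigl((a/D/b)_\gamma\bigr),
\]
where $(a/D/b)$ denotes the category of triples $(d,\phi\colon a\to p(d),\psi\colon q(d)\to b)$ with the evident morphisms, partitioned by the induced composite $\gamma = v\psi\circ\alpha_d\circ u\phi$.

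Third, I would identify the canonical mate at this test input with the projection from the coproduct above to the discrete set $\Hom_C(u(a),v(b))$ sending a triple to its induced $\gamma$. Since the target is discrete, the projection is a weak equivalence precisely when every fibre $N((a/D/b)_\gamma)$ is weakly contractible. Letting $a$, $b$, and $\gamma$ range freely and appealing to (Der2) to detect isomorphisms in $\ho(\sset^{(-)\op})$ pointwise, this yields the equivalence in the theorem.

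The main obstacle is the identifications in step two: namely, cleanly verifying $u_!h_a \simeq h_{u(a)}$ (which is a derivator-level Yoneda compatibility, provable by writing $h_a$ as a Kan extension from $\bbone$ along $a\colon\bbone\to A$ and using pseudo-functoriality of $(-)_!$) and that $q_!p^*h_a$ is presented by the nerve of the Grothendieck construction (a form of the Thomason--Bousfield--Kan formula). Once these two presentations are fixed, unwinding the definition of the canonical mate in \eqref{eq:hoexmate1'} in terms of units and counits shows that the mate is precisely the classifying map $(d,\phi,\psi)\mapsto \gamma$, and the fiberwise criterion drops out.
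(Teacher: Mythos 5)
The paper does not prove this statement at all---it quotes it verbatim from \cite[Theorem~3.16]{gps:mayer}---and your argument is essentially the proof strategy of that cited source (which, as the present paper remarks, relies on the Heller--Cisinski results): reduce to the universal derivator of spaces, then compute the canonical mate on the test objects $a_!(\ast)\simeq\Hom_A(a,-)$ and identify it with the classifying map $N(a/D/b)\to\Hom_C(u(a),v(b))$, whose fibres over the discrete target are exactly the nerves $N\bigl((a/D/b)_\gamma\bigr)$. Apart from a harmless variance slip (you name the presheaf derivator $\ho(\sset^{(-)\op})$ but then work with covariant corepresentables, which is in fact the convention matching the paper's diagram-based derivators) and the verifications you yourself flag (Yoneda compatibility of $u_!$, the Thomason-type identification of the hocolim with the Grothendieck construction, and cocontinuity/naturality of the mate for the density argument), your outline is correct and takes the same route.
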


Note that the converse direction of this theorem implies that the notion of homotopy exact squares depends only on the classical homotopy theory of spaces (see \cite[\S3]{gps:mayer} for more details). For later reference, we now specialize this combinatorial criterion to the context of homotopical epimorphisms (see \eqref{eq:hoepisq}).

\begin{defn}
Let $u\colon A\to B$ be a functor and $\gamma\colon b_1\to b_2$ be a morphism in~$B$. Then \((b_1/A/b_2)_\gamma\) is the category of factorizations of $\gamma$ as $b_1\to u(a)\to b_2.$ A morphism $(a,b_1\to u(a)\to b_2)\to (a',b_1\to u(a')\to b_2)$ is a morphism $a\to a'$ in~$A$ making the obvious diagram in $B$ commute.
\end{defn}

\begin{thm}\label{thm:detectepi}
A functor $u\colon A\to B$ is a homotopical epimorphism if and only if the categories $(b_1/A/b_2)_\gamma$ have weakly contractible nerves for all $b_1,b_2\in B$ and all $\gamma\colon b_1\to b_2.$ If this is the case, then $u^\ast\colon\D^B\to\D^A$ induces an equivalence onto its essential image which consists precisely of the objects $X\in\D^A$ such that the adjunction unit $\eta\colon X\to u^\ast u_!(X)$ or equivalently the adjunction counit $\epsilon\colon u^\ast u_\ast(X)\to X$ is an isomorphism.
\end{thm}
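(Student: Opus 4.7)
The plan is to specialize the general detection criterion \autoref{thm:hoexchar} to the square \eqref{eq:hoepisq} that defines homotopical epimorphisms. Reading \eqref{eq:hoepisq} in the notation of \autoref{thm:hoexchar}, we take $D = A$ with both projections equal to $u$, we take the roles of the theorem's $u$ and $v$ to be the identity functor of $B$, and we take $\alpha$ to be the identity transformation. An object of the associated comma category $(a/D/b)_\gamma$ for $a, b \in B$ and $\gamma\colon a \to b$ then unwinds to a triple $(d \in A,\ \phi\colon a \to u(d),\ \psi\colon u(d) \to b)$ with $\psi \circ \phi = \gamma$, and morphisms are maps in $A$ making the obvious triangles commute. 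After relabelling $a = b_1$ and $b = b_2$, this is precisely the factorization category $(b_1/A/b_2)_\gamma$ of the statement, so the biconditional is an immediate consequence of \autoref{thm:hoexchar}.

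Assuming now that $u$ is a homotopical epimorphism, homotopy exactness of \eqref{eq:hoepisq} unwinds to the counit $\epsilon\colon u_! u^\ast \to \id$ of the adjunction $(u_!, u^\ast)$ being a natural isomorphism on every derivator; by the standard characterization of fully faithful right adjoints, this exactly says that $u^\ast\colon \D^B \to \D^A$ is fully faithful. Equivalently, the unit $\eta\colon \id \to u_\ast u^\ast$ of $(u^\ast, u_\ast)$ is then also an isomorphism, so $u^\ast$ is fully faithful as a left adjoint too. In either reading, $u^\ast$ induces an equivalence of derivators onto its essential image.

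It remains to identify that essential image. If $X \cong u^\ast Y$ for some $Y$, then the triangle identity gives that the composite
\[
u^\ast Y \xto{\eta_{u^\ast Y}} u^\ast u_! u^\ast Y \xto{u^\ast(\epsilon_Y)} u^\ast Y
\]
is the identity; since $u^\ast(\epsilon_Y)$ is an isomorphism (as $\epsilon$ is, by the previous paragraph), $\eta_{u^\ast Y}$ is an isomorphism as well, and hence so is $\eta_X$. Conversely, if $\eta_X$ is an isomorphism, then $X \cong u^\ast u_! X$ manifestly lies in the essential image of $u^\ast$. The dual argument applied to the adjunction $(u^\ast, u_\ast)$ yields the analogous characterization via $\epsilon\colon u^\ast u_\ast X \to X$. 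The one step requiring care is the opening bookkeeping aligning the comma category of \autoref{thm:hoexchar} with the factorization category $(b_1/A/b_2)_\gamma$; the rest consists of standard adjunction manipulations.
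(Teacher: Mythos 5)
Your argument is correct and follows the same route as the paper: the biconditional is exactly the specialization of \autoref{thm:hoexchar} to the square \eqref{eq:hoepisq}, and the identification of the essential image is the standard consequence of having adjunctions $(u_!,u^\ast)$ and $(u^\ast,u_\ast)$ with $u^\ast$ fully faithful. Your extra bookkeeping with the comma category and the triangle identities simply spells out steps the paper declares immediate.
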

\begin{proof}
The first part is immediate from \autoref{thm:hoexchar}. The characterization of the essential image of the restriction functor is immediate from the fact that there are adjunctions $(u_!,u^\ast)$ and $(u^\ast,u_\ast)$ with $u^\ast$ being fully faithful.
\end{proof}

\begin{cor}\label{cor:piepi}
The projection functor $\pi_A\colon A\to\bbone$ is a homotopical epimorphism if and only if the nerve $N(A)$ is weakly contractible.
\end{cor}
\begin{proof}
It suffices to observe that the category $(\ast/A/\ast)_\id$ is isomorphic to $A$. Since this is the only case to be considered the result is immediate.
\end{proof}

Thus, if $N(A)$ is weakly contractible, then any morphism $\pi_A^\ast X\to\pi_A^\ast Y$ can be written as $\pi_A^\ast(f)$ for a unique map $f\colon X\to Y$ in the underlying category $\D(\bbone)$.

\begin{rmk}
Recall that a functor $u\colon A\to B$ is a \textbf{homotopy equivalence} if the canonical mate
\begin{equation}\label{eq:hoeqv}
(\pi_A)_! (\pi_A)^* \cong (\pi_B)_! u_! u^* (\pi_B)^* \to (\pi_B)_! (\pi_B)^*
\end{equation}
is an isomorphism in any derivator. Intuitively, a functor $A\to B$ is a homotopy equivalence if and only if it tells us that homotopy colimits of $A$-shaped constant diagrams and $B$-shaped constant diagrams are canonically isomorphic. Heller~\cite{heller:htpy} and Cisinski~\cite{cisinski:presheaves} showed that a functor $u\colon A\to B$ is a homotopy equivalence if and only if $N(u)\colon N(A)\to N(B)$ is a weak homotopy equivalence. Since vertical pastings of homotopy exact squares are again homotopy exact, the diagram
\[
\xymatrix{
A\ar[r]^-u\ar[d]_-u&B\ar[r]^-{\pi_B}\ar[d]^-=&\bbone\\
B\ar[d]_-{\pi_B}\ar[r]_-=&B\ar[d]^-{\pi_B}&\\
\bbone\ar[r]_-=&\bbone&
}
\]
tells us that homotopical epimorphisms are homotopy final functors. Recall from \autoref{egs:htpy} that these functors tell us something about homotopy colimits of not necessarily constant diagrams. In particular, by only considering constant diagrams we see that homotopy final functors are homotopy equivalences. As a summary, there are the following relations between these various notions:
\begin{align}
& u\colon A\to B \text{ is a homotopical epimorphism}\\
\Rightarrow\quad & u\colon A\to B\text{ is homotopy final}\\
\Rightarrow\quad & u\colon A\to B \text{ is a homotopy equivalence}\\
\Leftrightarrow\quad & N(u)\colon N(A)\to N(B) \text{ is a weak homotopy equivalence}
\end{align}
\end{rmk}

The notion `homotopical epimorphism' certainly deserves to be studied more systematically. Here however we only develop what is necessary for our approach to abstract reflection functors: in \S\ref{sec:epibiprod} we establish a key example which allows us to give a more symmetric description of biproduct diagrams and in \S\ref{sec:onepoint} we study the behavior of homotopical epimorphisms with respect to one-point extensions.

\section{Finite biproducts via invertible $n$-cubes}
\label{sec:epibiprod}

The aim of this section is to show that the `passage from the $n$-cube to the invertible $n$-cube' is a homotopical epimorphism, and to understand the essential image of the associated restriction functor (see \autoref{cor:invhyper}). To formalize this, we begin by the following definition.

\begin{defn}
Let $R$ be the category obtained from $[2]=(0\to 1\to 2)$ by freely inverting the morphism $0\to 2.$ The localization functor is denoted $p\colon [2]\to R.$
\end{defn}

Thus, the category $R$ has objects 0,1, and 2 and both objects 0 and 2 are zero objects. Moreover, the object $1\in R$ has a non-identity endomorphism $t\colon 1\to 1$ which is given by the zero map and hence is idempotent. A picture of the category suppressing the identity morphisms is
\medskip
\begin{equation}
\vcenter{
\xymatrix{
0\ar@<0.5ex>[r]&1\ar@<0.5ex>[r]\ar@<0.5ex>[l]\ar@(ul,ur)[]&2\ar@<0.5ex>[l],
}
}
\label{eq:category-R}
\end{equation}
in which the unlabeled loop is the idempotent $t\colon 1\to 1$ and in which the maps $0\to 2$ and $2\to 0$ are inverse isomorphisms. Thus, in total there are ten morphisms in $R$. Note that this category is equivalent to a category freely generated by a section-retraction pair (which has only two objects and five morphisms).

Since the following proposition is essential to our approach to reflection functors in~\S\ref{sec:reflection} and since this is the first instance of a proof in this paper in which homotopy exact squares and the calculus of mates (see \eqref{eq:hoexmate1'} and \eqref{eq:hoexmate2'}) play a key role, we include a reasonably complete proof. Note that the result is not formal in that~$p$ is neither a reflective localization nor a coreflective colocalization, and the result is hence not covered by \autoref{prop:basicepis}.

\begin{prop}\label{prop:pepi}
The localization functor $p\colon [2]\to R$ is a homotopical epimorphism. For every derivator \D, $p^\ast\colon\D^R\to\D^{[2]}$ induces an equivalence onto the full subderivator of $\D^{[2]}$ spanned by the objects $X$ such that $X_0\to X_2$ is an isomorphism.
\end{prop}
\begin{proof}
In order to check that $p$ is a homotopical epimorphism, we apply the combinatorial detection criterion \autoref{thm:detectepi}. In our situation this amounts to showing that the ten different categories $(r_1/[2]/r_2)_\gamma$ have weakly contractible nerves. Let us content ourselves by giving the details for the case of the non-identity idempotent $\gamma=t\colon 1\to 1.$ In this case, $(1/[2]/1)_t$ is freely generated by the graph
\[
\xymatrix{
&(0,1\to 0\to 1)\ar[rd]^-{(0\to 1)}\ar[d]_-{(0\to 1)}&\\
(1,1\stackrel{=}{\to}1\stackrel{t}{\to}1)\ar[dr]_-{(1\to 2)}&(1,1\stackrel{t}{\to}1\stackrel{t}{\to}1)\ar[d]^-{(1\to 2)}& (1,1\stackrel{t}{\to}1\stackrel{=}{\to}1)\\
&(2,1\to 2 \to 1)&
}
\]
where the undecorated morphisms are the unique maps in $[2]$ and $R$, respectively. Thus the nerve of this category is weakly contractible. The remaining nine cases are similar: one checks that the nerves of the respective categories are certain very small oriented trees (with at most three edges in the remaining cases) and that the nerves are hence also weakly contractible. Thus, \autoref{thm:detectepi} implies that $p\colon[2]\to R$ is a homotopical epimorphism.

Let us now describe the essential image of $p^\ast.$ As the morphism $0\to 2$ is invertible in $R$ it is immediate that $X_0\to X_2$ is an isomorphism for all $X$ in the essential image of $~p^\ast$. The converse is more involved. Since $p$ is a homotopical epimorphism, we are given an adjunction $(p_!,p^\ast)$ with a fully faithful right adjoint. Thus, $X$ lies in the essential image of $p^\ast$ if and only if the adjunction unit $\eta\colon X\to p^\ast p_!(X)$ is an isomorphism. Note that the adjunction unit is the canonical mate \eqref{eq:hoexmate1'} associated to the square on the right in
\begin{equation}
\vcenter{\xymatrix{
\bbone\ar[r]^-i\ar[d]_-=\drtwocell\omit{\id}&[2]\ar[r]^=\ar[d]_= \drtwocell\omit{\id}&[2]\ar[d]^p\\
\bbone\ar[r]_-i&[2]\ar[r]_p&R.
}}
\end{equation}
By (Der2), isomorphisms are detected pointwise and it is hence enough to show that the mate is an isomorphism for each $i=0,1,2$. Given $i=0,1,2$ one uses the compatibility of mates with pasting (see \autoref{egs:htpy}) together with the homotopy exactness of the above constant squares on the left 
to conclude that~$\eta_i$ is an isomorphism at~$X$ if and only if the mate associated to 
\begin{equation}
\vcenter{\xymatrix{
\bbone\ar[r]^i\ar[d]_= \drtwocell\omit{\id}&[2]\ar[d]^p\\
\bbone\ar[r]_-i&R
}}\label{eq:etai}
\end{equation}
is an isomorphism at~$X$.

We now re-express this statement for $i=0$ showing that it is satisfied under the additional assumption that $X_0\to X_2$ is an isomorphism. A homotopy finality argument tells us that the pasting
\begin{equation}
\vcenter{\xymatrix{
\bbone\ar[r]^-2\ar[d]&(p/0)\ar[r]\ar[d] \drtwocell\omit{}&[2]\ar[d]^p\\
\bbone\ar[r]&\bbone\ar[r]_-0&R
}}
\end{equation}
is homotopy exact, and the mate of the pasting is hence an isomorphism for an arbitrary~$X$. Note that this is not yet the diagram~\eqref{eq:etai} used to calculate~$\eta_0$ but that~\eqref{eq:etai} is obtained from it by a vertical pasting as in
\begin{equation}
\vcenter{\xymatrix{
\bbone\ar[r]^-0\ar[d]\drtwocell\omit{}&[2]\ar[d] \\
\bbone\ar[r]^-2\ar[d]\drtwocell\omit{}&[2]\ar[d]^p\\
\bbone\ar[r]_-0&R.
}}
\end{equation}
Now the fact that $X_0\to X_2$ is an isomorphism says precisely that the mate associated to the top square is an isomorphism when applied to~$X$.

To show that $\eta_2$ is invertible is simpler than the previous case in that it suffices to use a homotopy finality argument (see \autoref{egs:htpy}) as expressed by the pasting situation
\begin{equation}
\vcenter{\xymatrix{
\bbone\ar[r]^2\ar[d]&[2]\ar[r]\ar[d] \drtwocell\omit{}&[2]\ar[d]^p\\
\bbone\ar[r]&\bbone\ar[r]_-2&R.
}}
\end{equation}
Since this pasting is the same as the square \eqref{eq:etai} for $i=2$ we conclude that $\eta_2$ is an isomorphism without any further assumption on~$X$.

The remaining case of $i=1$ is the trickiest case, in which we start by considering the homotopy exact square
\begin{equation}
\vcenter{\xymatrix{
(p/1)\ar[r]\ar[d] \drtwocell\omit{}&[2]\ar[d]^p\\
\bbone\ar[r]_-1&R
}}
\end{equation}
given by (Der4). The category $(p/1)$ is freely generated by the graph
\begin{equation}
\vcenter{
\xymatrix{
(0,0\to 1)\ar[d]_-{(0\to 1)}\ar[r]^-{(0\to 1)}&(1,1\stackrel{t}{\to}1)\ar[r]^-{(1\to 2)}& (2,2\to 1)\\
(1,1\stackrel{=}{\to} 1)&&
}}\label{eq:slice1}
\end{equation}
where again the undecorated morphisms are the unique maps in $[2]$ or $R$, respectively. The inclusion $\ulcorner\to(p/1)$ which does not hit the object $(1,1\stackrel{t}{\to}1)$ is a right adjoint and hence homotopy final by \autoref{egs:htpy}. Using the compatibility of homotopy exact squares with respect to pasting again, we conclude that the pasting
\begin{equation}
\vcenter{\xymatrix{
\ulcorner\ar[r]\ar[d]&(p/1)\ar[r]\ar[d] \drtwocell\omit{}&[2]\ar[d]^p\\
\bbone\ar[r]&\bbone\ar[r]_-1&R
}}
\end{equation}
of the two squares to the right in  
\begin{equation}
\vcenter{\xymatrix{
\bbone\ar[r]^-1\ar[d]&[1]\ar[r]^-j\ar[d]&\ulcorner\ar[r]\ar[d]&(p/1)\ar[r]\ar[d] \drtwocell\omit{}&[2]\ar[d]^p\\
\bbone\ar[r]&\bbone\ar[r]&\bbone\ar[r]&\bbone\ar[r]_-1&R
}}
\end{equation}
is homotopy exact. In the above diagram $j\colon[1]\to\ulcorner$ is the vertical inclusion. Using the explicit description of the slice category \eqref{eq:slice1} we see that the pasting agrees with \eqref{eq:etai} for $i=1$. Since $1\in[1]$ is a terminal object, the square to the very left is homotopy exact by \autoref{egs:htpy}. Thus, using once more the compatibility of mates with pasting, it remains to show the following: if $Y\in\D(\ulcorner)$ is such that the horizontal map $Y_{0,0}\to Y_{1,0}$ is an isomorphism then the canonical mate $\colim_{[1]}j^\ast Y\to\colim_\ulcorner Y$ associated to the second square from the left is an isomorphism.

To show this we note that $j_!\colon\D([1])\to\D(\ulcorner)$ is fully faithful and that the essential image consists precisely of those $Y\in\D(\ulcorner)$ such that $Y_{0,0}\to Y_{1,0}$ is an isomorphism (this is the obvious variant of \cite[Prop.~3.12.(1)]{groth:ptstab}). The uniqueness of left adjoints yields a canonical isomorphism $\colim_{[1]}\cong\colim_\ulcorner j_!$ and this isomorphism makes the following diagram 
\[
\xymatrix{
\colim_{[1]}j^\ast j_!\ar[r]&\colim_\ulcorner j_!\\
\colim_{[1]}\ar[ru]_-\cong\ar[u]^-\eta
}
\]
commutative. Since $j$ is fully faithful, $\eta\colon\id\to j^\ast j_!$ is invertible and so is the horizontal morphism in this diagram. The above description of the essential image of $j_!$ concludes the proof.
\end{proof}

There is the following variant of the proposition for finitely many coordinates.

\begin{cor}\label{cor:invhyper}
The functor $q=p^{\times n}\colon [2]^{\times n}\to R^{\times n}$ is a homotopical epimorphism and $q^\ast\colon \D^{R^{\times n}}\to \D^{[2]^{\times n}}$ induces an equivalence onto the full subderivator of $\D^{[2]^{\times n}}$ spanned by all diagrams $X$ such that 
\[
X_{i_1,\ldots,i_{k-1},0,i_{k+1},\ldots,i_n}\to X_{i_1,\ldots,i_{k-1},2,i_{k+1},\ldots,i_n}
\]
is an isomorphism for all $i_1,\ldots,i_{k-1},i_{k+1},\ldots,i_n$ and $k$.
\end{cor}
\begin{proof}
The fact that $q$ is a homotopical epimorphism follows immediately from \autoref{prop:pepi} and the stability of homotopical epimorphisms under finite products (see \autoref{prop:basicepis}). The essential image of $q^\ast$ can be described inductively using \autoref{prop:pepi} again.
\end{proof}

It is immediate to see that this equivalence can be restricted to equivalences of full subderivators spanned by diagrams satisfying certain vanishing or exactness conditions. Let \D be a stable derivator and let $\D^{R^n,\mathrm{ex}}$ be the full subprederivator of $\D^{R^n}$ spanned by all diagrams such that 
\begin{enumerate}
\item all subcubes are strongly bicartesian and
\item the values at all corners are trivial.
\end{enumerate}

\begin{cor}\label{cor:invbiprod}
Let \D be a stable derivator. For every $n\geq 2$ there is a pseudo-natural equivalence $\D\times\ldots\times\D\simeq\D^{n\cdot\bbone}\to\D^{R^n,\mathrm{ex}}$.
\end{cor}
\begin{proof}
By axiom (Der1) there is a pseudo-natural equivalence $\D\times\ldots\times\D\simeq\D^{n\cdot\bbone}$. By \autoref{prop:biproducts} there is a pseudo-natural equivalence $\D^{n\cdot\bbone}\simeq\D^{[2]^n,\mathrm{ex}}$. The explicit descriptions of $\D^{[2]^n,\mathrm{ex}}$ and $\D^{R^n,\mathrm{ex}}$ together with \autoref{cor:invhyper} imply that we also have a pseudo-natural equivalence between these two derivators which concludes the proof.
\end{proof}

Thus, given a stable derivator, this corollary makes precise a way to encode coherent versions of finite biproduct diagrams in a completely symmetric fashion. Recall from \S\ref{sec:reflectrep} that this is roughly step \ref{step:two} in our strategy for the construction of reflection functors in \S\ref{sec:reflection}.

\section{One-point extensions}
\label{sec:onepoint}

Given an arbitrary tree $Q$ and a source $q_0\in Q$ of valence~$n$, we would like to glue an $n$-cube $[2]^n$ into the quiver, invert the cube, and then show that this localization functor is a homotopical epimorphism. Moreover, we want to show that the restriction functor induces an equivalence onto the derivator of all diagrams inverting the length two morphisms. Instead of trying to show this directly, we proceed differently. The following simple formalism of one-point extensions of small categories allows us to inductively reduce this more general context to the situation we already studied in~\S\ref{sec:epibiprod}. 

\begin{defn}
Let $A$ and $A'$ be small categories. The category $A'$ is a \textbf{one-point extension} of $A$ if there is a pushout diagram
\begin{equation}
\vcenter{
\xymatrix{
\bbone\ar[r]^-a\ar[d]_-i&A\ar[d]^-{j_A}\\
[1]\ar[r]&A'.
}
}
\label{eq:defone}
\end{equation}
\end{defn}

Thus, we simply attach a morphism to $A$ by identifying either its source or its target with a given object in $A$. Note that $A'$ comes with two full subcategories $A,A'-A\cong\bbone$ which are \emph{convex} in the sense that any morphism in these subcategories cannot factor through an object in the complement. 

If $A'$ is a one-point extension of $A$ as in \eqref{eq:defone} and if $u\colon A\to B$ is a functor, then there is a canonical functor $u'\colon A'\to B'$, where $B'$ is the corresponding one-point extension of $B$ at $u(a)$. More formally, there is the following diagram
\begin{equation}\label{eq:one-point-basics}
\vcenter{
\xymatrix{
\bbone\ar[r]^-a\ar[d]_-i&A\ar[r]^-u\ar[d]_-{j_A}&B\ar[d]^-{j_B}\ar@{}[rrd]|{=}&&
\bbone\ar[r]^-{u(a)}\ar[d]_-i&B\ar[d]^-{j_B}\\
[1]\ar[r]_-f&A'\pushoutcorner\ar[r]_-{u'}&B'&&
[1]\ar[r]_-{u'(f)}&B'.\pushoutcorner
}
}
\end{equation}
The cancellation property of pushout squares in $\cCat$ implies that also the square in the middle is a pushout square. In this situation, we say that $u'\colon A'\to B'$ is a \textbf{one-point extension} of $u\colon A\to B$. 

\begin{rmk}
Our terminology is again based on a similar construction common in representation theory; see~\cite[\S III.2]{ARS97}. If $A = kQ/I$ is a finite dimensional path algebra and $M \in \Mod(A)$, a \textbf{one-point extension} is the triangular matrix ring $B = \left(\begin{smallmatrix}k & 0 \\ M & A\end{smallmatrix}\right)$ with the obvious matrix multiplication. It is easy to see that $B$ is of the form $B = kQ'/I'$, where $Q'$ is obtained from $Q$ by adding a new source and arrows from it to vertices of $Q$. If $M$ is an indecomposable projective $A$-module, then $Q'$ is obtained from $Q$ by adding precisely one arrow from the source and no new relations. Thus, this is precisely a linearized version of our one-point extension defined above.
\end{rmk}

We collect a few examples and properties of one-point extensions.

\begin{egs}\label{egs:one-point-basics}
~ 
\begin{enumerate}
\item Let $T$ be a finite tree and let $T'\subseteq T$ be a non-empty connected subtree. Then $T$ can be obtained from $T'$ by iterated one-point extensions.
\item Let $P'$ be a poset with a unique minimal element $-\infty$, and let $P'$ be the poset obtained from $P$ by adjoining a new element $-\infty-1$ and with order relation generated by the one on $P$ and $-\infty-1\leq -\infty$. (Thus, if we consider $P'$ as a category, then $P=(P')^\lhd$.) Then 
\[
\xymatrix{
\bbone\ar[rr]^-{-\infty}\ar[d]_-1&&P'\ar[d]\\
[1]\ar[rr]_-{-\infty-1\leq-\infty}&&P
}
\]
exhibits $P$ as a one-point extension of $P'$.
\item If $u\colon A \to B$ is a functor in $\cCat$ and $j_A\colon A \to A'$ is a one-point extension, then the pushout $u'\colon A' \to B'$ of $u$ along $j_A$, 
\begin{equation}
\vcenter{
\xymatrix{
A\ar[r]^-{j_A}\ar[d]_-u&A'\ar[d]^-{u'}\\
B\ar[r]_-{j_B}&B',
}
}
\label{eq:onepointsquare}
\end{equation}
is again a one-point extension of $u$. In fact, this is the glueing property of pushout squares in $\cCat$ applied to \eqref{eq:one-point-basics}.
\end{enumerate}
\end{egs}

\begin{lem}\label{lem:onepoint-ad}
Let $u'\colon A'\to B'$ be a one-point extension of $u\colon A\to B$.
\begin{enumerate}
\item If $A'$ is obtained by attaching a morphism to its target, then $j_A\colon A\to A'$ admits a left adjoint $l_A$, i.e., $A$ is a reflective localization of $A'$. The functors~$l_A$ and $l_B$ can be chosen such that $u\circ l_A=l_B\circ u'\colon A'\to B$.
\item If $A'$ is obtained by attaching a morphism to its source, then $j_A\colon A\to A'$ admits a right adjoint $r_A$, i.e., $A$ is a coreflective colocalization of $A'$. The functors~$r_A$ and $r_B$ can be chosen such that $u\circ r_A=r_B\circ u'\colon A'\to B$.
\end{enumerate}
In particular, the nerves $N(A)$ and $N(A')$ are homotopy equivalent.
\end{lem}
\begin{proof}
We consider the case in (i). The left adjoint $l_A$ can be chosen to be the identity on $A\subseteq A'$. If the new morphism in $A'$ is given by $f\colon a_{-1}\to a_0$ with $a_0\in A$, then we set $l_A(a_{-1})=a_0$. Moreover, any map $\gamma\colon a_{-1}\to a$ with $a\in A$ factors uniquely as $a_{-1}\stackrel{f}{\to} a_0\stackrel{\tilde{\gamma}}{\to} a$ and we set $l_A(\gamma)=\tilde{\gamma}$. One checks that this defines the desired left adjoint to the fully faithful $j_A\colon A\to A'$, and the relation $u\circ l_A= l_B\circ u'$ is immediate. Since natural transformations induce simplicial homotopies between the induced maps on nerves it is immediate that any adjunction gives rise to a simplicial homotopy equivalence.
\end{proof}

\begin{prop}\label{prop:onepointepi}
A one-point extension $u'\colon A'\to B'$ of a homotopical epimorphism $u\colon A\to B$ is a homotopical epimorphism.
\end{prop}
\begin{proof}
We establish this result by applying the combinatorial detection principle for homotopical epimorphisms (\autoref{thm:detectepi}). Let us assume that $A'$ is obtained from $A$ by adding an object $a_{-1}$ together with a morphism $f\colon a_{-1}\to a_0$ (the case of the other orientation is similar). Then in $B'$ there is the corresponding morphism $u'(f)\colon b_{-1}\to b_0=u(a_0)$ describing $B'$ as a one-point extension of $B$. We note that the full subcategories $\{b_{-1}\},B\subseteq B'$ are convex. By \autoref{thm:detectepi} we have to show that the categories $(b_1/A'/b_2)_\gamma$ have weakly contractible nerves for all morphisms $\gamma\colon b_1\to b_2$ in $B'$. There are the following four cases to be considered.
\begin{enumerate}
\item Let $b_1,b_2\in B\subseteq B'$. Then the convexity of $B$ gives us an isomorphism of categories $(b_1/A'/b_2)_\gamma\cong(b_1/A/b_2)_\gamma$ where $(b_1/A/b_2)_\gamma$ is defined using the functor $u\colon A\to B$. But since $u\colon A\to B$ is a homotopical epimorphism this latter category has a weakly contractible nerve.
\item If $b_1=b_2=b_{-1}$ then there is only the identity map $\gamma=\id$ and it is immediate that $(b_{-1}/A'/b_{-1})_\id\cong \bbone$ so that the nerve is weakly contractible.
\item If $b_1\in B$ and $b_2=b_{-1}$ then there is no map $\gamma$ to be considered.
\item The remaining case is given by $b_1=b_{-1}$ and $b_2\in B$. By definition of $B'$ any such $\gamma\colon b_{-1}\to b_2$ can be uniquely factored as $u'(f)\colon b_{-1}\to b_0$ followed by a map $\tilde\gamma\colon b_0\to b_2$ in $B$. We leave it to the reader to check that this implies that the category $(b_{-1}/A'/b_2)_\gamma$ is obtained by a one-point extension from $(b_0/A/b_2)_{\tilde\gamma}$ which in turn is defined using the functor $u\colon A\to B$. The new object is given by $(a_{-1},\id\colon b_{-1}\to b_{-1},\gamma\colon b_{-1}\to b_2)$, and it is attached to $(a_0,\id\colon b_0\to b_0,\tilde\gamma\colon b_0\to b_2)$ by means of the map $f$. But since $u\colon A\to B$ is a homotopical epimorphism, $(b_0/A/b_2)_{\tilde\gamma}$ has a weakly contractible nerve and so the same is true for $(b_{-1}/A'/b_2)_\gamma$ by \autoref{lem:onepoint-ad}.
\end{enumerate} 
Thus in all cases the categories $(b_1/A'/b_2)_\gamma$ have weakly contractible nerves, and we are done by \autoref{thm:detectepi}.
\end{proof}

The next aim is to describe the essential image of $u'^\ast$ in terms of the one of $u^\ast$ and for that purpose we show that the square \eqref{eq:onepointsquare} is homotopy exact. We allow ourselves to re-emphasize that, in general, one has to be careful about what one means by saying that a \emph{commutative square} is homotopy exact. (Recall that there is such a potential source of confusion in the case of pullback squares involving Grothendieck (op)fibrations (see \cite[\S1.3]{groth:ptstab})). Even at the risk of being picky, let us consider the following squares
\[
\xymatrix{
A\ar[r]^-{j_A}\ar[d]_-u\drtwocell\omit{\id}&A'\ar[d]^-{u'}&&
A\ar[r]^-{j_A}\ar[d]_-u&A'\ar[d]^-{u'}\\
B\ar[r]_-{j_B}&B',&&
B\ar[r]_-{j_B}&B',\ultwocell\omit{\id}
}
\]
and make the following case distinction.

\begin{prop}\label{prop:oneexact}
Let $u'\colon A'\to B'$ be a one-point extension of $u\colon A\to B$.
\begin{enumerate}
\item The above square on the left is homotopy exact, i.e., the canonical mate $u_!j_A^\ast\to j_B^\ast u'_!$ or equivalently the canonical mate $(u')^\ast (j_B)_\ast\to (j_A)_\ast u^\ast $ is an isomorphism.
\item The above square on the right is homotopy exact, i.e., the canonical mate $j_B^\ast u'_\ast\to u_\ast j_A^\ast$ or equivalently the canonical mate $(j_A)_! u^\ast \to (u')^\ast (j_B)_! $ is an isomorphism.
\end{enumerate}
\end{prop}
\begin{proof}
We take care of the case that $A'$ is obtained from $A$ by attaching a morphism $f\colon a_{-1}\to j_A(a_0)$ for some object $a_0\in A$, and hence similarly for $B'$ with the morphism $u'(f)\colon b_{-1}\to j_B(b_0)$. (Note that this is enough since passing to opposite categories is covariant in functors but contravariant in transformations. Hence the homotopy exactness of the square on the left in case we attach a morphism to its target implies the homotopy exactness of the square on the right in the case where we attach a morphism to its source.)

We want to prove the results and we begin with statement (i), i.e., we show that the canonical mate $(u')^\ast (j_B)_\ast\to (j_A)_\ast u^\ast$ is an isomorphism. Since the functors $j_A\colon A\to A'$ and $a'_{-1}\colon\bbone\to A'$ are jointly surjective on objects, by (Der2) it suffices to show that the respective restrictions of the above canonical mate are isomorphisms. In the case of $j_A$, we consider the following diagram
\[
\xymatrix{
A\ar[r]^-\id\ar[d]_-\id\drtwocell\omit{\id}&A\ar[d]^-{j_A}&&A\ar[r]^-\id\ar[d]_-u\drtwocell\omit{\id}&A\ar[d]^-u\\
A\ar[r]^-{j_A}\ar[d]_-u\drtwocell\omit{\id}&A'\ar[d]^-{u'}&=&B\ar[r]^-\id\ar[d]_-\id\drtwocell\omit{\id}&B\ar[d]^-{j_B}\\
B\ar[r]_-{j_B}&B'&&B\ar[r]_-{j_B}&B'.
}
\]
Since these two pastings agree and since the three additional squares are homotopy exact ($j_A$ and $j_B$ are fully faithful), the functoriality of mates with pasting implies that $(j_A)^\ast(u')^\ast (j_B)_\ast\to (j_A)^\ast(j_A)_\ast u^\ast$ is an isomorphism.

For (i) it remains to show that also $(a'_{-1})^\ast(u')^\ast (j_B)_\ast\to (a'_{-1})^\ast(j_A)_\ast u^\ast$ is an isomorphism. To this end we consider the diagrams
\[
\xymatrix{
\bbone\ar[r]^-\id\ar[d]_-{a_0}\drtwocell\omit{}&\bbone\ar[d]^-{a_{-1}}\ar@{}[drr]|{=}&&\bbone\ar[r]^-\id\ar[d]_-{b_0}
\drtwocell\omit{}&\bbone\ar[d]^-{b_{-1}}\ar@{}[drr]|{=}&&\bbone\ar[r]^-\id\ar[d]\drtwocell\omit{}&\bbone\ar[d]\\
A\ar[r]^-{j_A}\ar[d]_-u\drtwocell\omit{\id}&A'\ar[d]^-{u'}&&B\ar[r]_-{j_B}&B' &&(b_{-1}/j_B)\ar[r]\ar[d]\drtwocell\omit{}&\bbone\ar[d]^-{b_{-1}}\\
B\ar[r]_-{j_B}&B'&&& &&B\ar[r]_-{j_B}&B'
}
\]
and the functoriality of mates with pasting implies that it suffices to show that the square in the middle is homotopy exact. But this square in turn is equal to the pasting on the right, which is given by a slice square and the square expressing the homotopy initiality of the functor $\bbone\to(b_{-1}/j_B)$ classifying the initial object $(b_0,b_{-1}\to b_0)$ (\autoref{egs:htpy}). 

It remains to establish (ii) which we do by showing that the canonical mate $(j_A)_!u^\ast\to(u')^\ast (j_B)_!$ is an isomorphism. Using again that $j_A\colon A\to A'$ and $a'_{-1}\colon \bbone\to A'$ are jointly surjective on objects, it suffices to consider the corresponding restrictions of this mate. In the case of the restriction along $j_A$ it suffices to swap the orientations of the identity transformations in the argument for the corresponding case in (i). Finally, the restriction along $a'_{-1}$ is necessarily an isomorphism since $j_A,j_B$ are inclusions of cosieves and the corresponding left Kan extensions are hence left extensions by initial objects (use the `unpointed variant' of \autoref{lem:extbyzero}). 
\end{proof}

We just showed that the square \eqref{eq:onepointsquare} is homotopy exact, independently of the orientation of the identity transformation and also independently of the orientation of the morphism attached in the one-point extensions.

\begin{thm}\label{thm:onepointessim}
Let $\D$ be a derivator and let $u'\colon A'\to B'$ be a one-point extension of a functor $u\colon A \to B$ in $\cCat$,
\[
\xymatrix{A\ar[r]^-{j_A}\ar[d]_-u&A'\ar[d]^-{u'}\\
B\ar[r]_-{j_B}&B'.
}
\]
\begin{enumerate}
\item If $u$ is a homotopical epimorphism, so is $u'$ and $X'\in\D^{A'}$ lies in the essential image of $u'^\ast\colon\D^{B'}\to\D^{A'}$ if and only if $j_A^\ast(X')$ lies in the essential image of $u^\ast\colon\D^B\to\D^A$.
\item If $u$ is fully faithful, so is $u'$. Moreover, $Y'\in\D^{B'}$ lies in the essential image of $u'_\ast\colon\D^{A'}\to\D^{B'}$ if and only if $j_B^\ast(Y')$ lies in the essential image of $u_\ast\colon\D^A\to\D^B$, and $Y'$ lies in the essential image of $u'_!\colon\D^{A'}\to\D^{B'}$ if and only if $j_B^\ast(Y')$ lies in the essential image of $u_!\colon\D^A\to\D^B$.
\end{enumerate}

\end{thm}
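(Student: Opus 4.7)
The fact that $u'$ is a homotopical epimorphism in part (i) is immediate from \autoref{prop:onepointepi}, so only the characterization of the essential image remains. The forward direction is formal: if $X' = (u')^* Y'$ then $j_A^* X' = u^* j_B^* Y'$ lies in the essential image of $u^*$. For the converse, the fact that $u'$ is a homotopical epimorphism tells us that $X'$ is in the essential image of $(u')^*$ if and only if the unit $\eta_{X'}\colon X' \to (u')^* (u')_! X'$ is an isomorphism (equivalently, the counit $(u')^*(u')_*X' \to X'$ is one), and by (Der2) this may be checked after restriction along the jointly surjective pair $j_A\colon A \to A'$ and $a_{-1}\colon \bbone \to A'$.

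At $j_A$, the homotopy exactness from \autoref{prop:oneexact}(i) provides an isomorphism $u_! j_A^* \xto{\cong} j_B^* (u')_!$, and the compatibility of mates with pasting identifies $j_A^*(\eta_{X'})$ with the unit $\eta^u_{j_A^*X'}$ of the adjunction $(u_!, u^*)$; this is an isomorphism by the hypothesis that $j_A^*X'$ is in the essential image of $u^*$. At $a_{-1}$, I would apply the pointwise formula (Der4) to the comma category $(u'/b_{-1})$ where $b_{-1} = u'(a_{-1})$. When $a_{-1}$ is attached as a source (so that $b_{-1}$ is a source in $B'$ receiving no nonidentity incoming maps), this comma category reduces to the singleton $\{(a_{-1}, \id)\}$ and the unit at $b_{-1}$ is an identity. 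When $a_{-1}$ is attached as a sink I would switch to the dual formulation via the counit and $(u')_*$; now $(b_{-1}/u')$ is a singleton and the counit at $b_{-1}$ is an identity.

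For part (ii), that $u'$ is fully faithful follows by direct inspection of the hom-sets in the pushout $A'$: the only new hom-sets involve $a_{-1}$, and these correspond bijectively via the attaching morphism to hom-sets in $A$ that are preserved by $u$ by assumption. For the essential image of $(u')_*$, I would again use pointwise checking: $Y' \in \D^{B'}$ lies in the essential image of $(u')_*$ iff the unit $Y' \to (u')_*(u')^*Y'$ is an isomorphism. At $j_B(b)$, \autoref{prop:oneexact}(i) reduces this to the unit for $u$ applied to $j_B^*Y'$. At $b_{-1}$ the comma $(b_{-1}/u')$ consists of $(a_{-1}, \id)$ together with, in the source-attachment case, a copy of a slice of $A$ viewed through $u$; the crucial point is that $(a_{-1}, \id)$ is initial (trivially when $a_{-1}$ is a sink, and in the source case using the full-faithfulness of $u$ to guarantee uniqueness of the required factorizations), so the limit reduces to $Y'_{b_{-1}}$ and the unit is the identity. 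The statement for $(u')_!$ follows by the dual argument using $(u'/b_{-1})$ and \autoref{prop:oneexact}(ii).

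The main obstacle is the bookkeeping at the new object $a_{-1}$: the correct comma category to analyze depends on the orientation of the attached morphism and on whether we work with the left or right Kan extension, but in every case the category is either a singleton or admits an evident initial/terminal object (with the latter relying on the full-faithfulness hypothesis of part (ii)), so that the unit or counit reduces to an identity. Everything else is a pointwise check combined with the homotopy exactness already established in \autoref{prop:oneexact}.
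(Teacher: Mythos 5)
Your proof is correct and follows essentially the same route as the paper: reduce to pointwise checks via (Der2), use \autoref{prop:oneexact} together with the compatibility of mates with pasting at the objects coming from $A$ (resp.\ $B$), observe that at the new object the relevant comma category is a point (or has an initial object), and treat the opposite orientation dually via counits. The only blemishes are cosmetic: in part (ii) your references to \autoref{prop:oneexact} are interchanged -- the mate $j_B^\ast u'_\ast\to u_\ast j_A^\ast$ needed for the $u'_\ast$-statement is the one from its part (ii), while $u_! j_A^\ast\to j_B^\ast u'_!$ from part (i) is the one needed for $u'_!$ -- and at $b_{-1}$ the paper bypasses your comma-category computation by noting that $b_{-1}=u'(a_{-1})$ lies in the image of the fully faithful $u'$ (so, by \cite[Lemma~1.21]{groth:ptstab}, the unit is automatically invertible there).
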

\begin{proof}
(i) We first give the details for the case that $A'$ is obtained from $A$ by attaching a map $a_{-1}\to a_0$ with $a_0\in A$, and then say a few words about the other case. Let $\eta\colon\id\to u^\ast u_!$ and $\eta'\colon\id\to u'^\ast u'_!$ be the adjunction units. Since $u$ is a homotopical epimorphism, the same is true for $u'$ by \autoref{prop:onepointepi}. We thus have to show that $\eta'$ is an isomorphism on $X'\in\D^{A'}$ if and only if $\eta$ is an isomorphism on the restriction $j_A^\ast(X')\in\D^A$.

We begin by showing that $\eta'$ is an isomorphism if and only if $j_A^\ast\eta'$ is an isomorphism, i.e., that $\eta'$ is always an isomorphism when evaluated at $a_{-1}$. This follows easily from the two diagrams
\[
\xymatrix{
(\id/a_{-1})\ar[r]^-p\ar[d]_-\pi\drtwocell\omit{}&A'\ar[r]^-=\ar[d]_-=&A'\ar[d]^-{u'}&&
(u'/u'(a_{-1}))\ar[d]\ar[r]\drtwocell\omit{}&A'\ar[d]^-{u'}\\
\bbone\ar[r]_-{a_{-1}}&A'\ar[r]_-{u'}&B',&&
\bbone\ar[r]_-{u'(a_{-1})}&B'.
}
\]
By the compatibility of mates with pasting, the canonical mate associated to the diagram on the left factors as
\[
\pi_! p^\ast\stackrel{\cong}{\to} (a_{-1})^\ast\stackrel{\eta'_{a_{-1}}}{\to} (a_{-1})^\ast (u')^\ast(u')_!
\]
of which the first map is an isomorphism by (Der4). Thus, this mate is an isomorphism if and only if $\eta'_{a_{-1}}$ is an isomorphism. Note that in both diagrams the upper left corner is a terminal category (for the case on the right recall that $A'$ is obtained from $A$ by attaching a new morphism to its target). It is then obvious that the two pastings can be identified and that the diagram on the right is homotopy exact by (Der4). Thus, $\eta'_{a_{-1}}$ is always an isomorphism as desired.

After this reduction step it suffices to consider the following factorizations
\[
\xymatrix{
A\ar[r]^-{j_A}\ar[d]_-=&A'\ar[r]^-=\ar[d]^-=&A'\ar[d]^-{u'}\ar@{}[rrd]|{=}&&
A\ar[r]^-=\ar[d]_-=&A\ar[r]^-{j_A}\ar[d]^-u\drtwocell\omit{\id}&A'\ar[d]^-{u'}\\
A\ar[r]_-{j_A}&A'\ar[r]_-{u'}&B'&&
A\ar[r]_-u&B\ar[r]_-{j_B}&B'
}
\]
of the identity transformation. Since the square to the very right is homotopy exact (statement (i) of \autoref{prop:oneexact}), we see that the mate $\beta\colon u_!j_A^\ast\to j_B^\ast u'_!$ is a natural isomorphism. Using the compatibility of mates with respect to pasting, we obtain that $j_A^\ast\eta'$ is an isomorphism if and only if $u^\ast\beta\cdot \eta j_A^\ast$ is an isomorphism if and only if $\eta j_A^\ast$ is an isomorphism. Together with the reduction step this implies that $\eta'$ is an isomorphism if and only if $\eta j_A^\ast$ is an isomorphism, concluding the proof in the first case.

The case of one-point extensions obtained by attaching a new morphism to its source is dual in that one uses the adjunction \emph{counits} to characterize the essential images of the respective restriction functors. The first reduction step is then the same while the second step is based on statement (ii) of \autoref{prop:oneexact}.

(ii) If $u$ is fully faithful, then clearly also $u'$ is fully faithful. A coherent diagram $Y'$ is in the essential image of $u'_\ast\colon \D^{A'} \to \D^{B'}$ if and only if the adjunction unit $\eta'\colon Y' \to u'_\ast u'^\ast(Y')$ is an isomorphism. By~\cite[Lemma 1.21(2)]{groth:ptstab}, this is equivalent to saying that $\eta'_{b'}\colon Y'_{b'} \to u'_\ast u'^\ast(Y')_{b'}$ is an isomorphism for each object $b' \in B'-u'(A')$. By construction, any such $b'$ can be uniquely written as $j_B(b)$ with $b \in B - u(A)$, hence $\eta'_{Y'}$ is an isomorphism if and only if $j_B^\ast \eta'_{Y'}$ is an isomorphism. To reformulate this further, we consider the diagram
\[
\xymatrix{
A\ar[r]^-u\ar[d]_-{j_A}\drtwocell\omit{\id}&B\ar[d]^-{j_B}&&A\ar[r]^-u\ar[d]_-u\drtwocell\omit{\id}&B\ar[d]^-\id\\
A'\ar[r]^-{u'}\ar[d]_-{u'}\drtwocell\omit{\id}&B'\ar[d]^-\id&=&B\ar[r]^-\id\ar[d]_-{j_B}\drtwocell\omit{\id}&
B\ar[d]^-{j_B}\\
B'\ar[r]_-\id&B'&&B'\ar[r]_-\id&B'.
}
\]
The upper-left square is homotopy exact (\autoref{prop:oneexact}) as is the lower-right square as a constant square. Since the canonical mates of the remaining two squares are the respective units, the functoriality of mates with pasting implies that $j_B^\ast \eta'_{Y'}$ is an isomorphism if and only if $\eta_{j_B^\ast Y'}$ is an isomorphism. Thus, $Y'\in\D^{B'}$ is in the essential image of $u'_\ast$ if and only if $j_B^\ast Y'$ lies in the essential image of $u_\ast.$
\end{proof}

In the following section this result will be used in the construction of the reflection functors. For instance iterated one-point extensions allow us to pass from the base case of $q\colon[2]^n\to R^n$ studied in \S\ref{sec:epibiprod} to the more complicated case in the context of arbitrary oriented trees.

\section{Abstract tilting theory for trees}
\label{sec:reflection}

In this section we construct an abstract version of reflection functors for trees and show that they induce strongly stably equivalent quivers. As an application we obtain an abstract tilting result for trees (see \autoref{thm:reflection}, its discussion, and its corollaries). We carry out the details of the strategy outlined in \S\ref{sec:reflectrep} (see also \autoref{fig:reflection}). Thus, let $Q$ be an oriented tree, let $q_0\in Q$ be a source of valence~$n$, and let $f_i\colon q_0\to q_i, 1\leq i\leq n,$ be the morphisms adjacent to~$q_0$. 

We begin with step \ref{step:one} of the strategy mentioned in \S\ref{sec:reflectrep} and hence pass to the category $Q_1$ which is obtained by gluing in an $n$-cube $[2]^n$ of length two. To make this precise let us recall the cone construction. Given a small category $A$, then the \textbf{cone} $A^\lhd$ is the small category obtained from $A$ by adjoining a new initial object $-\infty$. The cone construction is obviously functorial in $A$ and it comes with a fully faithful natural inclusion functor $i_A\colon A\to A^\lhd.$ In particular, this gives rise to the commutative diagram of small categories
\begin{equation}
\vcenter{
\xymatrix{
[1]^n_{=n-1}\ar[r]^-{w_1}\ar[d]&[1]^n_{\geq n-1}\ar[r]^-{w_2}\ar[d]&[1]^n\ar[d]\\
([1]^n_{=n-1})^\lhd\ar[r]_-{v_1}&([1]^n_{\geq n-1})^\lhd\ar[r]_-{v_2}&([1]^n)^\lhd.
}
}
\label{eq:step1preparation}
\end{equation}
(Note that these squares are not pushout squares in $\cCat$.) We observe that the morphisms $f_i\colon q_0\to q_i,1\leq i\leq n$, together define a functor $([1]^n_{=n-1})^\lhd\to Q$. This together with the construction \eqref{eq:biproductsshape} of finite biproduct diagrams via $n$-cubes gives rise to the following commutative diagram of small categories 
\begin{equation}
\vcenter{
\xymatrix{
[1]^n_{=n-1}\ar[r]^-{w_1}\ar[d]&[1]^n_{\geq n-1}\ar[r]^-{w_2}\ar[d]&[1]^n\ar[r]^-{w_3}\ar[d]&I\ar[r]^-{w_4}\ar[dd]^-{i^{(3)}}&[2]^n\ar[dd]^-{\iota_1}\\
([1]^n_{=n-1})^\lhd\ar[r]_-{v_1}\ar[d]&([1]^n_{\geq n-1})^\lhd\ar[r]_-{v_2}\ar[d]&([1]^n)^\lhd\ar[d]&&\\
Q\ar[r]_-{u_1}&Q^{(1)}\ar[r]_-{u_2}\pushoutcorner&Q^{(2)}\ar[r]_-{u_3}\pushoutcorner&
Q^{(3)}\ar[r]_-{u_4}\pushoutcorner&Q_1,\pushoutcorner
}
}
\label{eq:step1details}
\end{equation}
in which the four additional squares are pushout squares. Note that the top row is precisely \eqref{eq:biproductsshape}. Moreover, \autoref{egs:one-point-basics} immediately imply that $u_i,i=1,2$ are iterated one-point extensions of $v_i$ and similarly that $u_i,i=3,4,$ are iterated one-point extensions of $w_i$. It is easy to check that the functors $u_j, j=1,\ldots,4$, in the bottom row are fully faithful, so that also the Kan extension functors
\begin{equation}
\vcenter{
\xymatrix{
\D^Q\ar[r]^-{(u_1)_\ast}&\D^{Q^{(1)}}\ar[r]^-{(u_2)_\ast}&\D^{Q^{(2)}}\ar[r]^-{(u_3)_!}&\D^{Q^{(3)}}\ar[r]^-{(u_4)_\ast}&\D^{Q_1}
}
}
\label{eq:step1}
\end{equation}
are fully faithful. For a stable derivator~\D, recall from \autoref{prop:biproducts} the definition of the derivator $\D^{[2]^n,\mathrm{ex}}$ of coherent biproduct diagrams based on (non-invertible) $n$-cubes, and note also that the functor $\iota_1\colon[2]^n\to Q_1$ is defined in \eqref{eq:step1details}. 

\begin{prop}\label{prop:step1}
Let \D be a stable derivator. Then \eqref{eq:step1} induces a pseudo-natural equivalence between $\D^Q$ and the full subderivator $\D^{Q_1,\mathrm{ex}}$ of $\D^{Q_1}$ spanned by all diagrams~$X$ such that $\iota_1^\ast(X)$ lies in $\D^{[2]^n,\mathrm{ex}}$.
\end{prop}
\begin{proof}
Since the functors in \eqref{eq:step1} are fully faithful, their composition induces an equivalence onto the essential image. Once we know that this essential image is precisely $\D^{Q_1,\mathrm{ex}}$ this justifies that $\D^{Q_1,\mathrm{ex}}$ actually is a derivator. This equivalence is pseudo-natural with respect to exact morphisms of stable derivators because only homotopy finite Kan extensions are involved (see \cite[\S7]{ps:linearity}). We now go through the individual steps of the construction and justify that they do what they are supposed to, i.e., that the additional branches of the tree attached to the~$q_i$ do not perturb this construction. 

The first step $(u_1)_\ast$ adds only one new object to our coherent diagrams which will become the final vertex of the $n$-cube $[2]^n.$ Since $u_1\colon Q\to Q^{(1)}$ is the inclusion of a sieve, it follows that $(u_1)_\ast$ is right extension by zero, and hence induces a pseudo-natural equivalence between $\D^Q$ and the full subderivator $\D^{Q^{(1)},\mathrm{ex}}$ of $\D^{Q^{(1)}}$ spanned by all diagrams vanishing on the new object (see \autoref{lem:extbyzero}).

The second step $(u_2)_\ast$ is also fully faithful, and hence induces an equivalence onto its essential image. Since the functor $u_2$ is obtained from the fully faithful inclusion $v_2\colon ([1]^n_{\ge n-1})^\lhd \to ([1]^n)^\lhd$ in \eqref{eq:step1details} by finitely many one-point extensions, \autoref{thm:onepointessim}(ii) implies that $X$ lies in the essential image of $(u_2)_\ast$ if and only if the restriction of $X$ to $([1]^n)^\lhd$ lies in the essential image of $(v_2)_\ast$. In other words, writing $i^{(2)}\colon [1]^n\to Q^{(2)}$ for the functor in \eqref{eq:step1details}, it is easy to see that the essential image of $(u_2)_\ast$ is the full subderivator of $\D^{Q^{(2)}}$ spanned by the diagrams $X$ such that $(i^{(2)})^\ast X$ is strongly bicartesian. Let us write $\D^{Q^{(2)},\mathrm{ex}}$ for the full subderivator of $\D^{Q^{(2)}}$ of the diagrams $X$ such that $(i^{(2)})^\ast X$ is strongly bicartesian and vanishes on the final vertex. Then, this second step induces a pseudo-natural equivalence $\D^{Q^{(1)},\mathrm{ex}}\simeq\D^{Q^{(2)},\mathrm{ex}}$. 

The functor $u_3\colon Q^{(2)}\to Q^{(3)}$ is the inclusion of a cosieve and $(u_3)_!$ is hence left extension by zero. Thus, by \autoref{lem:extbyzero}, $(u_3)_!\colon\D^{Q^{(2)}}\to\D^{Q^{(3)}}$ induces an equivalence onto the full subderivator of $\D^{Q^{(3)}}$ spanned by all objects $X$ such that $(i^{(3)})^\ast(X)\in\D^I$ vanishes on the objects $(0,2,\ldots,2),\ldots,(2,\ldots,2,0)$ (the functor $i^{(3)}$ is defined via \eqref{eq:step1details}). If we write $\D^{Q^{(3)},\mathrm{ex}}$ for the full subderivator of $\D^{Q^{(3)}}$ spanned by the diagrams $X$ such that $(i^{(3)})^\ast(X)$ satisfies these vanishing conditions, makes the $n$-cube bicartesian, and also vanishes on the final object, then $(u_3)_!$ induces a pseudo-natural equivalence $\D^{Q^{(2)},\mathrm{ex}}\simeq\D^{Q^{(3)},\mathrm{ex}}$.

It remains to study the final step $(u_4)_\ast\colon \D^{Q^{(3)}}\to \D^{Q_1}$. The claim is that this functor amounts to adding strongly bicartesian cubes everywhere, and that $(u_4)_\ast$ hence induces an equivalence onto the corresponding full subderivator of $\D^{Q_1}$. The details of the proof of this claim are basically a combination of the details of the proof of \autoref{prop:biproducts} together with the observation that the additional branches of the tree attached to the $q_i$ can be ignored. To put this latter claim a bit more precisely, $u_4$ is obtained from the inclusion $w_4\colon I \to [2]^n$ in \eqref{eq:step1details} by finitely many one-point extensions, where we add $q_0$ first and then all the branches at $q_1,\dots,q_n$ of $Q$ which do not contain $q_0$. This way, we can again use \autoref{thm:onepointessim}(ii) to infer that $X \in \D^{Q_1}$ is in the essential image of $(u_4)_\ast$ if and only if the restriction of $X$ to $[2]^n$ is in the essential image of $(w_4)_\ast$, and then conclude by the proof of \autoref{prop:biproducts}.

As a summary, \eqref{eq:step1} induces pseudo-natural equivalences of stable derivators
\[
\xymatrix{
\D^Q\ar[r]^-\simeq_-{(u_1)_\ast}&\D^{Q^{(1)},\mathrm{ex}}\ar[r]^-\simeq_-{(u_2)_\ast}&\D^{Q^{(2)},\mathrm{ex}}\ar[r]^-\simeq_-{(u_3)_!}&\D^{Q^{(3)},\mathrm{ex}}\ar[r]^-\simeq_-{(u_4)_\ast}&\D^{Q_1,\mathrm{ex}},
}
\]
and it is immediate that the final derivator $\D^{Q_1,\mathrm{ex}}$ is precisely the one considered in the statement of this proposition.
\end{proof}

We now take care of steps \ref{step:two} and \ref{step:three} of the strategy outlined in \S\ref{sec:reflectrep}. This basically amounts to recycling the results of \S\ref{sec:epibiprod} and \S\ref{sec:onepoint}. More specifically, we pass from the category $Q_1$ to the category $Q_2$ obtained by `inverting the $n$-cube', i.e., we thus consider the pushout
\begin{equation}
\vcenter{
\xymatrix{
[2]^n\ar[r]^q\ar[d]_-{\iota_1}&R^n\ar[d]^-{\iota_2}\\
Q_1\ar[r]_-{u_5}&Q_2,
}
}
\label{eq:step23}
\end{equation}
where $q$ is the localization functor studied in \S\ref{sec:epibiprod} (see \autoref{cor:invhyper}). As noted prior to \autoref{prop:step1}, $\iota_1$ is an iterated one-point extension, hence \autoref{egs:one-point-basics} imply that $u_5$ is obtained from $q$ by iterated one-point extensions as well.

\begin{prop}\label{prop:step23}
The functor $u_5\colon Q_1\to Q_2$ is a homotopical epimorphism. For a derivator \D, $u_5^\ast\colon \D^{Q_2}\to \D^{Q_1}$ induces an equivalence onto the full subderivator of $\D^{Q_1}$ spanned by all diagrams $X$ such that in $\iota_1^\ast(X)\in\D^{[2]^n}$ the morphisms
\[
(i_1,\ldots,i_{k-1},0,i_{k+1},\ldots,i_n)\to (i_1,\ldots,i_{k-1},2,i_{k+1},\ldots,i_n)
\]
are sent to isomorphisms for all $i_1,\ldots,i_{k-1},i_{k+1},\ldots,i_n$ and $k$.
\end{prop}
\begin{proof}
We know that $q\colon[2]^n\to R^n$ is a homotopical epimorphism and that its essential image admits a similar description (see \autoref{cor:invhyper}). The above description of $u_5$ as an iterated one-point extension of $q$ together with \autoref{prop:onepointepi} implies that $u_5$ is also a homotopical epimorphism. Using the description of the essential image of $q^\ast$ given in \autoref{cor:invhyper}, an inductive application of \autoref{thm:onepointessim}(i) gives us the desired description of the essential image of $(u_5)^\ast$.
\end{proof}

For a stable derivator~\D, recall the definition of $\D^{Q_1,\mathrm{ex}}$ in \autoref{prop:step1} and of the derivator $\D^{R^n,\mathrm{ex}}$ of coherent biproduct diagrams based on invertible $n$-cubes as given just prior to \autoref{cor:invbiprod}. Let us denote by $\D^{Q_2,\mathrm{ex}}$ the full subderivator of $\D^{Q_2}$ spanned by the diagrams $X$ such that $\iota_2^\ast(X)$ lies in $\D^{R^n,\mathrm{ex}}$ (the functor $\iota_2\colon R^n\to Q_2$ is defined via \eqref{eq:step23}).

\begin{cor}\label{cor:step23}
Let \D be a stable derivator and let $u_5\colon Q_1\to Q_2$ be as in \eqref{eq:step23}. Then $u_5^\ast\colon\D^{Q_2}\to\D^{Q_1}$ induces a pseudo-natural equivalence $\D^{Q_2,\mathrm{ex}}\to\D^{Q_1,\mathrm{ex}}$.
\end{cor}
\begin{proof}
We saw at the end of \S\ref{sec:epibiprod} that $q^\ast\colon\D^{R^n}\to\D^{[1]^n}$ induces a pseudo-natural equivalence of stable derivators $\D^{R^n,\mathrm{ex}}\to\D^{[1]^n,\mathrm{ex}}$. It is immediate that the equivalence of \autoref{prop:step23} restricts to the desired pseudo-natural equivalence $\D^{Q_2,\mathrm{ex}}\to\D^{Q_1,\mathrm{ex}}$.
\end{proof}

We turn to the final step \ref{step:four} of the strategy described in \S\ref{sec:reflectrep}. In this step we add the cofiber of the map $q_0\to (1,\ldots,1)$ in $Q_2$, where $(1,\ldots,1)$ is the object supporting the biproduct. Recall from \eqref{eq:square} our naming convention for the objects in $\square=[1]^2$. Let $[1]\to\ulcorner$ classify the horizontal map $(0,0)\to(1,0)$ and let $\ulcorner\to\square$ be the obvious inclusion. We consider the pushout squares 
\begin{equation}
\vcenter{
\xymatrix{
[1]\ar[r]\ar[d]&\ulcorner\ar[r]\ar[d]_-{i^{(4)}}&\square\ar[d]^-{\iota_3}\\
Q_2\ar[r]_-{u_6}&Q^{(4)}\ar[r]_-{u_7}&Q_3,
}
}
\label{eq:step4details}
\end{equation}
in which the vertical map on the left classifies the arrow $q_0\to (1,\ldots,1)$ in~${Q_2}$. We denote the objects in the image of $\iota_3$ by
\[
\xymatrix{
q_0\ar[r]\ar[d]&b\ar[d]\\
z\ar[r]&q_0',
}
\]
where the name `$b$' stands for biproduct and `$z$' for zero. If \D is a derivator, then the Kan extension morphisms
\begin{equation}
\vcenter{
\xymatrix{
\D^{Q_2}\ar[r]^-{(u_6)_\ast}&\D^{Q^{(4)}}\ar[r]^-{(u_7)_!}&\D^{Q_3}
}
}
\label{eq:step4}
\end{equation}
are fully faithful as the same is true for $u_6$ and $u_7$.

\begin{prop}\label{prop:step4}
If \D is pointed, then \eqref{eq:step4} induces an equivalence between $\D^{Q_2}$ and the full subderivator of $\D^{Q_3}$ spanned by all~$X$ such that $\iota_3^\ast(X)\in\D^{\square}$ is a cofiber square, i.e., $\iota_3^\ast(X)\in\D^{\square}$ is cocartesian and vanishes on $(0,1)\in\square$. 
\end{prop}
\begin{proof}
It is enough to describe the essential images of $(u_6)_\ast$ and $(u_7)_!$. As $u_6$ is the inclusion of a sieve, it follows that $(u_6)_\ast$ is right extension by zero. Thus, \autoref{lem:extbyzero} implies that $(u_6)_\ast\colon\D^{Q_2}\to\D^{Q^{(4)}}$ induces an equivalence onto the full subderivator of $\D^{Q^{(4)}}$ spanned by all~$X$ such that ${i^{(4)}}^\ast(X)$ vanishes at $(0,1)$ (where $i^{(4)}$ is defined via \eqref{eq:step4details}).

We now turn to the functor $(u_7)_!\colon\D^{Q^{(4)}}\to\D^{Q_3}.$ Since $u_7$ is fully faithful and the complement of the image consists of precisely the object $q_0'$ only, we can detect the essential image of $(u_7)_!$ by considering the counit at~$q_0'$ (\cite[Lemma~1.21]{groth:ptstab}). Thus, $X$ lies in the essential image of $(u_7)_!$ if and only if $\epsilon_{q_0'}\colon (u_7)_!u_7^\ast(X)_{q_0'}\to X_{q_0'}$ is an isomorphism. To analyze this further we consider the pasting
\[
\xymatrix{
\ulcorner\ar[r]^-r\ar[d]&(u_7/q_0')\ar[r]\ar[d]\drtwocell\omit{}&Q^{(4)}\ar[r]^-{u_7}\ar[d]^-{u_7}&Q_3\ar[d]^-=\\
\bbone\ar[r]&\bbone\ar[r]_-{q_0'}&Q_3\ar[r]_-=&Q_3,
}
\]
where $r\colon\ulcorner\to(u_7/q_0')$ is given by
\[
\xymatrix{
q_0\ar[r]\ar[d]\ar[dr]&b\ar[d]^-{f_b}\\
z\ar[r]_-{f_z}&q_0'.
}
\]
Let us assume for the moment that $r$ is a right adjoint. Using the compatibility of mates with pasting, (Der4), and the homotopy finality of right adjoints (see \autoref{egs:htpy}) we then conclude that $\epsilon_{q_0'}$ is an isomorphism if and only if the mate of the above pasting is an isomorphism. But this pasting can be rewritten as
\[
\xymatrix{
\ulcorner\ar[r]^-\cong\ar[d]&(i_\ulcorner/(1,1))\ar[r]\ar[d]\drtwocell\omit{}&\ulcorner\ar[r]^-{i_\ulcorner}
\ar[d]_-{i_\ulcorner}&[1]^2\ar[r]^-{\iota_3}\ar[d]^-=&Q_3\ar[d]^-=\\
\bbone\ar[r]_-=&\bbone\ar[r]_-{(1,1)}&[1]^2\ar[r]_-=&[1]^2\ar[r]_-{\iota_3}&Q_3,
}
\]
where $\iota_3$ is defined via \eqref{eq:step4details}. Thus, using similar arguments, we deduce that $\epsilon_{q_0'}$ is an isomorphism if and only if the counit $(i_\ulcorner)_!(i_\ulcorner)^\ast \iota_3^\ast(X)\to \iota_3^\ast(X)$ is an isomorphism when evaluated at $(1,1)$. By a further application of \cite[Lemma~1.21]{groth:ptstab} we see that this is the case if and only if $\iota_3^\ast(X)$ is cocartesian. 

Hence, assuming that $r$ indeed is a right adjoint, we see that $(u_7)_!\colon\D^{Q^{(4)}}\to\D^{Q_3}$ induces an equivalence onto the full subderivator of $\D^{Q_3}$ of those $X$ such that $\iota_3^\ast(X)$ is cocartesian. It is easy to see that this restricts to a further equivalence between the respective full subderivators of objects vanishing at $z$. Putting these two steps together gives the desired result.

As for the existence of the left adjoint to $r\colon\ulcorner\to(u_7/q_0')$ let us consider objects of $(u_7/q_0')$ which do not lie in the image of $r$. These are pairs $(x,g\colon u_7(x)\to q_0')$ such that the structure map factors uniquely as
\begin{equation}\label{eq:unit-rl}
g = f_b \circ \eta_{(x,g)}\colon u_7(x) \to b\to q_0'.
\end{equation}
We leave it to the reader to check that the assignment which sends all such objects to $(1,0)$ and which is inverse to $r$ on the remaining objects defines a unique functor $l\colon(u_7/q_0')\to\ulcorner$. To conclude the construction of the adjunction, we specify a unit $\eta$ and a counit $\epsilon$. In fact, $\epsilon$ is chosen as the identity $lr = \id_{\ulcorner}$. The components of $\eta$ at objects in the image of $r$ are chosen to be identities as well, while the remaining ones are the maps $\eta_{(x,g)}\colon (x,g) \to (b,f_b)$ from \eqref{eq:unit-rl}. The verification of the triangular identities is left to the reader.
\end{proof}

In order to formulate the following corollary and as a preparation for the proof of \autoref{thm:reflection}, let us summarize in the following diagram part of the categories involved in the constructions so far (as spelled out in \eqref{eq:step1details}, \eqref{eq:step23}, and \eqref{eq:step4details}),
\begin{equation}
\vcenter{
\xymatrix{
&[2]^n\ar[r]\ar[d]_-{\iota_1}&R^n\ar[d]^-{\iota_2}&\square\ar[d]^-{\iota_3}\\
Q\ar[r]&Q_1\ar[r]&Q_2\ar[r]&Q_3.
}
}
\label{eq:summary}
\end{equation}
If \D is a stable derivator, then let $\D^{Q_3,\mathrm{ex}}$ be the full subderivator of $\D^{Q_3}$ spanned by all diagrams $X$ such that the restriction to $Q_2$ lies in $\D^{Q_2,\mathrm{ex}}$ and $\iota_3^\ast(X)\in\D^\square$ is a cofiber square.

\begin{cor}\label{cor:step4}
Let \D be a stable derivator. Then \eqref{eq:step4} induces a pseudo-natural equivalence $\D^{Q_2,\mathrm{ex}}\to\D^{Q_3,\mathrm{ex}}$.
\end{cor}
\begin{proof}
This follows immediately from \autoref{prop:step4} because the functors in \eqref{eq:step4} are fully faithful.
\end{proof}

The constructions carried out so far correspond to the left half of \autoref{fig:reflection}. We now indicate the changes which are necessary to implement the other half of that figure. 

\begin{con}\label{con:dual-con}
Starting with the quiver $Q'$, instead of considering \eqref{eq:step1preparation} we begin with the two upper left commutative squares in
\begin{equation}
\vcenter{
\xymatrix{
[1]^n_{=1}\ar[r]^-{w'_1}\ar[d]&[1]^n_{\leq 1}\ar[r]^-{w'_2}\ar[d]&[1]^n\ar[r]^-{w'_3}\ar[d]&I'\ar[r]^-{w'_4}\ar[dd]^-{i^{(3)}}&[2]^n\ar[dd]^-{\iota_1}\\
([1]^n_{=1})^\rhd\ar[r]_-{v'_1}\ar[d]&([1]^n_{\leq 1})^\rhd\ar[r]_-{v'_2}\ar[d]&([1]^n)^\rhd\ar[d]&&\\
Q'\ar[r]_-{u'_1}&Q'^{(1)}\ar[r]_-{u'_2}\pushoutcorner&Q'^{(2)}\ar[r]_-{u'_3}\pushoutcorner&
Q'^{(3)}\ar[r]_-{u'_4}\pushoutcorner&Q'_1.\pushoutcorner
}
}
\label{eq:dual-step1preparation}
\end{equation}
The remaining part of the above diagram is obtained by forming pushout squares in $\cCat$. This is in complete analogy to~\eqref{eq:step1details}, the only difference being that the fully faithful composition
\[
\xymatrix@1{
[1]^n\ar[r]^-{w'_3}& I'\ar[r]^-{w'_4}& [2]^n.
}
\]
is, unlike in~\eqref{eq:biproductsshape}, the inclusion $[1]^n \subseteq [2]^n\colon x\mapsto x$ and the full subcategory $I'$ is spanned from the image of this inclusion and the corners
\[ 
(2,0,0,\dots,0,0),\quad (0,2,0,\dots,0,0),\quad \dots,\quad (0,0,0,\dots,0,2). 
\]
The pushout on the left in the following diagram is completely analogous to~\eqref{eq:step23}, thereby defining the functor $u_5'\colon Q_1' \to Q_2'$, 
\[
\xymatrix{
[2]^n\ar[r]\ar[d]&R^n\ar[d]&&[1]\ar[r]\ar[d]&\lrcorner\ar[r]^-{i_\lrcorner}\ar[d]&\square\ar[d]\\
Q'_1\ar[r]_-{u'_5}&Q'_2,\pushoutcorner&&Q'_2\ar[r]_{u'_6}&(Q')^{(4)}\ar[r]_-{u_7'}\pushoutcorner&Q'_3.\pushoutcorner
}
\]
A corresponding modification of~\eqref{eq:step4} yields the two pushout squares on the right. Here, the functor $[1] \to \lrcorner$ classifying the vertical map while the functor $[1] \to Q_2'$ classifies the arrow $(1,\dots,1) \to q_0'$. 

Exactly as in the case of $Q$ one shows that, for every stable derivator \D, the above constructions induce equivalences of derivators
\begin{equation}\label{eq:dual-half}
\xymatrix@1{
\D^{Q'} \ar[r]^-\simeq & \D^{Q_1',\mathrm{ex}} \ar@{<-}[r]^-\simeq & \D^{Q_2',\mathrm{ex}} \ar[r]^-\simeq & \D^{Q_3',\mathrm{ex}}.
}
\end{equation}
Here, the derivators $\D^{Q'_k,\mathrm{ex}}\subseteq\D^{Q'_k},k=1,2,3,$ are defined by the obvious exactness properties (see \autoref{prop:step1}, \autoref{prop:step23}, and \autoref{prop:step4}).
\end{con}

The key observation now is that the categories $Q_3$ and $Q'_3$ are isomorphic and that this isomorphism induces an isomorphism of derivators $\D^{Q'_3,\mathrm{ex}} \cong \D^{Q_3,\mathrm{ex}}$. More intuitively and as indicated in \autoref{fig:reflection}, the category $Q_3$ contains both the quiver $Q$ and the reflected quiver $Q'$ as subcategories and it is completely symmetric. Since pushouts of small categories are, in general, fairly complicated we include a detailed proof of this fact. 

\begin{lem} \label{lem:symmetry-Q3}
Let \D be a stable derivator. There is an isomorphism of categories $\sigma\colon Q_3 \to Q_3'$ which induces an isomorphism of derivators $\sigma^\ast\colon \D^{Q'_3,\mathrm{ex}} \cong \D^{Q_3,\mathrm{ex}}$.
\end{lem}

\begin{proof}
Let us adopt the following notational convention. If $A$ is one of the categories obtained from $Q$ in the bottom lines of \eqref{eq:step1details}, \eqref{eq:step23} or \eqref{eq:step4details}, we denote by $A_\mathrm{red}$ the full subcategory obtained by removing object $q_0$. Similarly, if $A'$ is one of the analogous categories obtained from $Q'$ (see \autoref{con:dual-con}), then $A'_\mathrm{red}$ denotes the full subcategory obtained by removing $q_0'$. By the very definition of $Q'$ as a reflection of $Q$, we have $Q_\mathrm{red} = Q'_\mathrm{red}$ and there are pushouts of free categories
\begin{equation}
\vcenter{
\xymatrix{
[1]^n_{=n-1} \ar[r] \ar[d]_-{(q_1,\dots,q_n)} & ([1]^n_{=n-1})^\lhd \ar[d] &&
[1]^n_{=1} \ar[r] \ar[d]_-{(q_1,\dots,q_n)} & ([1]^n_{=1})^\rhd \ar[d]
\\
Q_\textrm{red} \ar[r] & Q,\pushoutcorner &&
Q'_\textrm{red} \ar[r] & Q.\pushoutcorner'
}
}
\label{eq:Qred-to-Q}
\end{equation}

We now consider the following commutative cube in $\cCat$ in which the top and the front faces appear in~\eqref{eq:step1details}, the left face is the first of the pushout squares in~\eqref{eq:Qred-to-Q}, and the back face is defined to be a pushout (the notation for the category $Q^{(2)}_\mathrm{red}$ will soon be justified),
\[
\xymatrix@R=1.5pc{
[1]^n_{=n-1} \ar[rr] \ar[dr] \ar[dd] && [1]^n \ar[dr] \ar'[d][dd] \\
& ([1]^n_{=n-1})^\lhd \ar[rr] \ar[dd] && ([1]^n)^\lhd \ar[dd] \\
Q_\textrm{red} \ar'[r][rr] \ar[dr] && Q^{(2)}_\textrm{red} \ar[dr] \\
& Q \ar[rr] && Q^{(2)}.
}
\]
The universal property of the pushout square on the back induces a canonical functor $Q^{(2)}_\mathrm{red}\to Q^{(2)}$, thereby concluding the construction of the cube. Moreover, since the left, the front, and the back faces are pushouts, the composition and cancellation property of pushout squares in $\cCat$ implies that also the right face of the cube is a pushout square. 
%
%
%
This pushout exhibits $Q^{(2)}_\textrm{red} \to Q^{(2)}$ as a one-point extension (see \autoref{egs:one-point-basics}(ii) and~(iii)) and this justifies the notation for $Q^{(2)}_\mathrm{red}$. In fact, we conclude that $Q^{(2)}$ is obtained from $Q^{(2)}_\textrm{red}$ by attaching the morphism $q_0 \to (0,\dots,0)$ at its sink.

Let us now construct the following pushout squares in $\cCat$
\begin{equation}\label{eq:very-detailed}
\vcenter{
\xymatrix{
[1]^n_{=n-1} \ar[r] \ar[d] & [1]^n \ar[d] \ar[r]^-{w_4w_3} \ar[d] & [2]^n \ar[r]^-q \ar[d] & R^n \ar[d] \\
Q_\mathrm{red} \ar[r] & Q^{(2)}_\mathrm{red} \pushoutcorner \ar[r] \ar[d] & (Q_1)_\mathrm{red} \pushoutcorner \ar[r] \ar[d] & (Q_2)_\mathrm{red} \pushoutcorner \ar[d] \\
& Q^{(2)} \ar[r]_-{u_4u_3} & Q_1 \pushoutcorner \ar[r]_-{u_5} & Q_2.\pushoutcorner
}
}
\end{equation}
The leftmost square is the back face of the above cube, and the gluings of the second and third columns can be found in~\eqref{eq:step1details} and~\eqref{eq:step23}, respectively. Composing the three upper squares in \eqref{eq:very-detailed} we obtain the pushout square on the left in 
\begin{equation}
\vcenter{
\xymatrix{
[1]^n_{=n-1} \ar[r]^-g \ar[d]_-{q_1,\dots,q_n} & R^n \ar[d] &&
[1]^n_{=1} \ar[r]^-{g'} \ar[d]_-{q_1,\dots,q_n} & R^n \ar[d]
\\
Q_\mathrm{red} \ar[r] & (Q_2)_\textrm{red},\pushoutcorner &&
Q'_\mathrm{red} \ar[r] & (Q'_2)_\textrm{red}.\pushoutcorner 
}
}
\label{eq:Q2-revisited}
\end{equation}
where $g$ maps $(1,1,\dots,0,\dots,1)$ to $(2,2,\dots,1,\dots,2) \in R^n$. Moreover, if we compose the two bottom squares in \eqref{eq:very-detailed}, then we conclude by the above discussion that $Q_2$ is a one-point extension of $(Q_2)_\textrm{red}$ obtained by attaching the morphism $q_0 \to b = (1,\dots,1)$ at $b$ (\autoref{egs:one-point-basics}(iii)). 

Starting dually with $Q'$, we obtain the pushout square on the right in \eqref{eq:Q2-revisited}, in which $g'$ maps $(0,0,\dots,1,\dots,0)$ to $(0,0,\dots,1,\dots,0) \in R^n$, and $Q'_2$ is a one-point extension of $(Q'_2)_\textrm{red}$ obtained by attaching the morphism $(1,\dots,1) = b \to q'_0$ at $b$.

These two pushout squares are related by the following two isomorphisms.
\begin{enumerate}
\item Let $\phi\colon R \toiso R\colon x\mapsto 2-x$ be the automorphism of the category $R$, which corresponds to the flip with respect to the vertical axis in~\eqref{eq:category-R}, and 
\item let $\psi\colon [1]^n_{=n-1}\toiso [1]^n_{=1}$ be the isomorphism of discrete categories given by $\psi\colon (1,1,\dots,0,\dots,1) \mapsto (0,0,\dots,1,\dots,0)$.
\end{enumerate}
One now easily checks that these isomorphisms yield an isomorphism of spans
\[
\xymatrix{
Q_\mathrm{red}\ar[d]_-\id&[1]^n_{=n-1}\ar[l]_-{(q_1,\dots,q_n)}\ar[r]^-g\ar[d]_-\psi^-\cong&R^n\ar[d]^-{\phi^n}_-\cong\\
Q'_\mathrm{red}&[1]^n_{=1}\ar[r]_-{g'}\ar[l]^-{(q_1,\dots,q_n)}&R^n.
}
\]
In particular, we obtain an isomorphism of categories $\theta\colon (Q_2)_\mathrm{red} \to (Q'_2)_\mathrm{red}$ which restricts to $\id\colon Q_\mathrm{red} \to Q'_\mathrm{red}$ and which sends $b \in (Q_2)_\mathrm{red}$ to $b \in (Q'_2)_\mathrm{red}$ (compare to the third row of \autoref{fig:reflection}). 

If we glue the pushout defining the one-point extension $(Q_2)_\mathrm{red} \to Q_2$ with \eqref{eq:step4details}, we obtain the pushout square on the left in 
\[
\xymatrix{
\bbone \ar[r]^-{(1,0)} \ar[d]_-b & \square \ar[d]^-{\iota_3} &&
\bbone \ar[r]^-{(1,0)} \ar[d]_-b & \square \ar[d]^-{\iota'_3} \\
(Q_2)_\mathrm{red} \ar[r] & Q_3,\pushoutcorner &&
(Q'_2)_\mathrm{red} \ar[r] & Q'_3,\pushoutcorner
}
\]
while the square on the right is dual. Hence the isomorphism $\theta\colon (Q_2)_\mathrm{red} \toiso (Q'_2)_\mathrm{red}$ induces an isomorphism $\sigma\colon Q_3 \toiso Q'_3$ such that $\sigma\iota_3 = \iota'_3$. This isomorphism induces an isomorphism $\sigma^\ast\colon \D^{Q'_3} \cong \D^{Q_3},$ and it is straightforward to verify that it restricts to the desired equivalence $\sigma^\ast\colon \D^{Q'_3,\mathrm{ex}} \cong \D^{Q_3,\mathrm{ex}}$.
\end{proof}

We now only have to assemble the results obtained so far in order to prove our main result.

\begin{thm}\label{thm:reflection}
Let $Q$ be an oriented tree, let $q_0\in Q$ be a source, and let $Q'=\sigma_{q_0}Q$ be the reflected quiver. The quivers $Q$ and $Q'$ are strongly stably equivalent. 
\end{thm}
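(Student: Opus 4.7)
The plan is to assemble the chain of pseudo-natural equivalences built in \S\ref{sec:reflection}, which was precisely designed so that this theorem falls out by composition. More specifically, I would first combine \autoref{prop:step1}, \autoref{cor:step23}, and \autoref{cor:step4} to produce, for every stable derivator \D, a chain of pseudo-natural equivalences
\[
\D^{Q} \;\simeq\; \D^{Q_1,\mathrm{ex}} \;\simeq\; \D^{Q_2,\mathrm{ex}} \;\simeq\; \D^{Q_3,\mathrm{ex}},
\]
noting that the middle arrow is obtained as an inverse of the equivalence $u_5^\ast\colon \D^{Q_2,\mathrm{ex}} \to \D^{Q_1,\mathrm{ex}}$ from \autoref{cor:step23}. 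Each constituent is pseudo-natural in exact morphisms of stable derivators since the construction only involves restrictions and finite-shape Kan extensions, which are preserved by such morphisms.

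Next, applying \autoref{con:dual-con} to the quiver $Q'$ (which has $q_0$ as a sink) and dualizing the arguments, one obtains a parallel chain
\[
\D^{Q'} \;\simeq\; \D^{Q'_1,\mathrm{ex}} \;\simeq\; \D^{Q'_2,\mathrm{ex}} \;\simeq\; \D^{Q'_3,\mathrm{ex}},
\]
where the middle equivalence comes from the homotopical epimorphism $u_5'\colon Q_1' \to Q_2'$ (the dual of \autoref{prop:step23}) and the last from a dual of \autoref{prop:step4} now constructing a fiber square rather than a cofiber square; here one uses that the defining diagram is cartesian exactly when it is cocartesian because \D is stable, so the essential images of $\D^{Q_3,\mathrm{ex}}$ and $\D^{Q'_3,\mathrm{ex}}$ match up under the comparison of shapes. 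These equivalences are again pseudo-natural in \D.

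It then remains to bridge the two chains via the isomorphism of categories $\sigma\colon Q_3 \toiso Q'_3$ produced in \autoref{lem:symmetry-Q3}, which by construction restricts to an isomorphism $\sigma^\ast\colon \D^{Q'_3,\mathrm{ex}} \cong \D^{Q_3,\mathrm{ex}}$; this isomorphism is strictly natural in \D since it is induced by a single functor of small categories. Composing everything gives a pseudo-natural equivalence
\[
\Phi_\D\colon \D^{Q} \;\simeq\; \D^{Q_1,\mathrm{ex}} \;\simeq\; \D^{Q_2,\mathrm{ex}} \;\simeq\; \D^{Q_3,\mathrm{ex}} \;\cong\; \D^{Q'_3,\mathrm{ex}} \;\simeq\; \D^{Q'_2,\mathrm{ex}} \;\simeq\; \D^{Q'_1,\mathrm{ex}} \;\simeq\; \D^{Q'},
\]
which is the desired strong stable equivalence in the sense of \autoref{defn:sse}.

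There is no genuine difficulty left: the conceptual work — namely verifying that gluing in a biproduct cube, localizing it to an invertible cube, and cofibering off the source all yield fully faithful restrictions with controllable essential images, and that the resulting interpolating category $Q_3$ is intrinsically symmetric in $Q$ and $Q'$ — has already been done in the preceding propositions. The only thing one must double-check when writing out the proof is that the pseudo-naturality isomorphisms $\gamma_F$ for an exact morphism $F\colon \D\to\E$ compose coherently across the eight-step chain, but this is automatic since each step is produced by a (pseudo-naturally compatible) Kan extension adjunction or by restriction along a fixed functor of small categories.
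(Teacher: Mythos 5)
Your proposal is correct and follows essentially the same route as the paper: compose the chain from \autoref{prop:step1}, \autoref{cor:step23}, and \autoref{cor:step4}, the dual chain for $Q'$ from \autoref{con:dual-con}, and bridge them with the isomorphism $\sigma^\ast$ of \autoref{lem:symmetry-Q3}. No gaps.
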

\begin{proof}
Let \D be a stable derivator. Then by \autoref{prop:step1}, \autoref{cor:step23}, and \autoref{cor:step4} there are pseudo-natural equivalences of stable derivators
\[
\D^Q\stackrel{\simeq}{\to}\D^{Q_1,\mathrm{ex}}\stackrel{\simeq}{\ot}\D^{Q_2,\mathrm{ex}}\stackrel{\simeq}{\to}\D^{Q_3,\mathrm{ex}}.
\]
If we instead begin with the reflected quiver $Q'$ and perform the dual constructions, we obtain a similarly defined sequence of pseudo-natural equivalences of stable derivators
\[
\D^{Q'}\stackrel{\simeq}{\to}\D^{Q'_1,\mathrm{ex}}\stackrel{\simeq}{\ot}\D^{Q'_2,\mathrm{ex}}\stackrel{\simeq}{\to}\D^{Q'_3,\mathrm{ex}};
\]
see \autoref{con:dual-con} and \eqref{eq:dual-half}. Since $(\sigma\inv)^\ast\colon \D^{Q_3,\mathrm{ex}} \cong \D^{Q'_3,\mathrm{ex}}$ by \autoref{lem:symmetry-Q3}, we obtain a sequence of pseudo-natural equivalences
\[
\D^Q\stackrel{\simeq}{\to}\D^{Q_1,\mathrm{ex}}\stackrel{\simeq}{\ot}\D^{Q_2,\mathrm{ex}}\stackrel{\simeq}{\to}\D^{Q_3,\mathrm{ex}}{\cong}
\D^{Q'_3,\mathrm{ex}}\stackrel{\simeq}{\ot}\D^{Q'_2,\mathrm{ex}}\stackrel{\simeq}{\to}\D^{Q'_1,\mathrm{ex}}\stackrel{\simeq}{\ot}\D^{Q'},
\]
which is to say that $Q$ and $Q'$ are strongly stably equivalent.
\end{proof}

\begin{defn}
Let \D be a stable derivator. The components $s_{q_0}^-\colon\D^Q\stackrel{\simeq}{\to}\D^{Q'}$ of the strong stable equivalence $s_{q_0}^-\colon Q\sse Q'$  constructed in the proof of \autoref{thm:reflection} are the \textbf{reflection functors} associated to the oriented tree~$Q$ and the source~$q_0$. The inverse strong stable equivalence $s_{q_0}^+\colon Q'\sse Q$ has components $s_{q_0}^+\colon \D^{Q'}\stackrel{\simeq}{\to}\D^Q$, which are also called reflection functors.
\end{defn}

\begin{eg}
Let $k$ be a field and let $\D_k$ be the derivator of~$k$, i.e., the homotopy derivator associated to the projective model structure on unbounded chain complexes over~$k$. Recall that there are equivalences $\D_k^Q\simeq\D_{kQ}$ and the reflection functors $(s_{q_0}^-,s_{q_0}^+)\colon Q\sse Q'$ hence specialize to an equivalence of derivators
\[
(s_{q_0}^-,s_{q_0}^+)\colon \D_{kQ}\rightleftarrows \D_{kQ'}.
\]
The underlying exact equivalence of triangulated categories $D(kQ)\simeq D(kQ')$ can be identified with the equivalence in \autoref{thm:happel} established by Happel in \cite{happel:dynkin}.
\end{eg}

\begin{rmk}
Since these reflection functors are available for \emph{every stable derivator}, we immediately get similar equivalences for the derivator $\D_R$ of a ring~$R$, for the derivator $\D_X$ of a (quasi-compact and quasi-separated) scheme~$X$, for the derivator $\D_A$ of a differential-graded algebra~$A$, for the derivator $\D_E$ of a (symmetric) ring spectrum~$E$, and for further examples arising in algebra, geometry, and topology. Moreover, these reflection functors are pseudo-natural with respect to exact morphisms of derivators. For more details on further examples of stable derivators we refer to \cite[\S5]{gst:basic}. 
\end{rmk}

Also the following corollaries have implications for all these different contexts.

\begin{cor}\label{cor:tree}
Two oriented trees are strongly stably equivalent if and only if the underlying unoriented graphs are isomorphic.
\end{cor}
\begin{proof}
It is a purely combinatorial argument that arbitrary reorientations of trees can be obtained by iterated reflections at sources and sinks; see \cite[Theorem 1.2(1)]{BGP:reflection}. Thus, different orientations of the same tree are strongly stably equivalent by an iterated use of \autoref{thm:reflection}. If on the other hand $Q,Q'$ are two oriented trees and $\D_k$ is the derivator of any field $k$, then $\D_k^Q$ and $\D_k^{Q'}$ can only be equivalent if the underlying graphs of $Q$ and $Q'$ are isomorphic by
\cite[Proposition~5.3]{gst:basic}.
\end{proof}

Let us agree that a \textbf{forest} is a disjoint union of trees.

\begin{cor}\label{cor:forest}
Let $F_1$ and $F_2$ be not necessarily connected quivers with the same forest as underlying graph. Then $F_1$ and $F_2$ are strongly stably equivalent.
\end{cor}
\begin{proof}
This is immediate from \autoref{cor:tree} and (Der1).
\end{proof}

As already mentioned in the introduction, a more detailed analysis of these reflection functors and other functors in the case of $A_n$-quivers will be in \cite{gst:Dynkin-A}. This includes the construction of universal tilting modules, certain spectral refinements of classical tilting complexes, realizing important functors in arbitrary stable homotopy theories. And in \cite{gst:acyclic} we obtain similar results for \emph{acyclic quivers}.

\appendix

\bibliographystyle{alpha}
\bibliography{tilting2}

\end{document}